\tikzset{
v/.style={
  circle, draw, inner sep=2pt, minimum size=3pt, fill=black}
}
\theoremstyle{plain}% default
\newtheorem{theorem}{Theorem}[section]
\newtheorem{lemma}[theorem]{Lemma}
\newtheorem{proposition}[theorem]{Proposition}
\newtheorem{corollary}[theorem]{Corollary}
\theoremstyle{definition}
\newtheorem{definition}[theorem]{Definition}
\newtheorem{example}[theorem]{Example}
\newtheorem{problem}[theorem]{Problem}
\theoremstyle{remark}
\newcommand\A{\mathcal{A}}
\DeclareMathOperator{\Der}{Der}
\DeclareMathOperator{\Ker}{Ker}
\DeclareMathOperator{\rank}{rank}
\DeclareMathOperator{\Preds}{Preds}
\title {Signed graphs and the freeness of the Weyl subarrangements of type $B_{\ell}$
}
\author{
Daisuke Suyama\thanks{%Office of International Affairs, Hokkaido University, Sapporo, Hokkaido 060-0815, Japan.
Department of Mathematics, Hokkaido University, Sapporo, Hokkaido 060-0810, Japan.
email:dsuyama@math.sci.hokudai.ac.jp}, 
Michele Torielli\thanks{Department of Mathematics, Hokkaido University, Sapporo, Hokkaido 060-0810, Japan.
email:torielli@math.sci.hokudai.ac.jp},
Shuhei Tsujie\thanks{Department of Mathematics, Hokkaido University, Sapporo, Hokkaido 060-0810, Japan.
email:tsujie@math.sci.hokudai.ac.jp}
}
\date{}
\begin{document}
\maketitle
	
\begin{abstract}
A Weyl arrangement is the hyperplane arrangement defined by a root system.
%Arnold and 
Saito proved that every Weyl arrangement is free.
The Weyl subarrangements of type $A_{\ell}$ are represented by simple graphs.
Stanley gave a characterization of freeness for this type of arrangements in terms of thier graph.
In addition, The Weyl subarrangements of type $B_{\ell}$ can be represented by signed graphs.
A characterization of freeness for them is not known. 
However, characterizations of freeness for a few restricted classes are known. 
For instance, Edelman and Reiner characterized the freeness of the arrangements between type $ A_{\ell-1} $ and type $ B_{\ell} $. 
In this paper, we give a characterization of the freeness and supersolvability of the Weyl subarrangements of type $B_{\ell}$ under certain assumption.
\end{abstract}

{\footnotesize \textit{Keywords}: 
Hyperplane arrangement, 
Graphic arrangement, 
Weyl arrangement, 
Free arrangement,
Supersolvable arrangement,
Chordal graph, 
Signed graph
}

{\footnotesize \textit{2010 MSC}: 
52C35, %Arrangements of points, flats, hyperplanes [52Cxx Discrete geometry]
32S22,  %Relations with arrangements of hyperplanes [32Sxx Singularities]
05C15, %Coloring of graphs and hypergraphs [05Cxx Graph theory]
05C22, %Signed and weighted graphs [05Cxx Graph theory]
20F55,  %Reflection and Weyl groups [20Fxx Special aspects of infinite or finite groups]
13N15 %Derivations [13Nxx Differential algebra]
}

\section{Introduction}\label{sec:introduction}
A (central) \textbf{hyperplane arrangement} (or simply an arrangement) is a finite collection of linear hyperplanes, that is, subspaces of codimension one in a vector space. 
In this paper, we will focus on \textbf{freeness} and \textbf{supersolvability} of arrangements. 
(See Section \ref{sec:arrangement} for the definitions and results of free and supersolvable arrangements.) 

A Weyl arrangement is a collection of the hyperplanes orthogonal to the roots of a root system. 
%Arnold \cite{arnold1976wave, arnold1979indices} and 
Saito \cite{saito1977unifomization, saito1980theory} proved that Weyl arrangements are free. 

For a root system of type $ A_{\ell-1} $, the corresponding Weyl arrangement is also known as the braid arrangement. 
We can associate to each simple graph $ G $ on vertex set $ \{1, \dots, \ell \} $ a subarrangement of the braid arrangement 
\begin{align*}
\mathcal{A}(G)\coloneqq \Set{\{x_{i}-x_{j}=0\} | \{i,j\} \text{ is an edge of } G}, 
\end{align*}
where $ x_{1}, \dots, x_{\ell} $ denote the coordinates and $ \{\alpha = 0\} $ denotes the hyperplane $ \Ker(\alpha) $ for a linear form $ \alpha $. 
The arrangement $ \mathcal{A}(G) $ is called a \textbf{graphic arrangement}. 
Obviously, every subarrangement of a braid arrangement is a graphic arrangement. 
A simple graph $ G $ is called \textbf{chordal} if every cycle of length at least four has a \textbf{chord}, which is an edge of $ G $ that is not part of the cycle but connects two vertices of the cycle. 
Stanley characterized freeness and supersolvability of graphic arrangements as follows: 
\begin{theorem}[Stanley {\cite[Corollary 4.10]{stanley2007introduction}} ]\label{Stanley graphic arrangement}
For a simple graph $ G $, the following are equivalent
\begin{enumerate}[(1)]
\item $ G $ is chordal. 
\item $ \mathcal{A}(G) $ is supersolvable. 
\item $ \mathcal{A}(G) $ is free. 
\end{enumerate}
\end{theorem}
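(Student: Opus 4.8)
The natural plan is to prove the cyclic chain of implications $(1)\Rightarrow(2)\Rightarrow(3)\Rightarrow(1)$. The implication $(2)\Rightarrow(3)$ is immediate from the general fact that every supersolvable arrangement is free (Jambu--Terao), which I would record among the preliminaries in Section~\ref{sec:arrangement}; so the real content is $(1)\Rightarrow(2)$ and $(3)\Rightarrow(1)$.

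For $(1)\Rightarrow(2)$ I would induct on the number of vertices using a perfect elimination ordering $v_{1},\dots,v_{\ell}$ of the chordal graph $G$ (whose existence is one of the standard characterizations of chordality). Writing $G_{i}$ for the full subgraph on $\{v_{1},\dots,v_{i}\}$, each $G_{i}$ is again chordal, so by induction $\mathcal{A}(G_{\ell-1})$ is supersolvable, and it suffices to show that adjoining the vertex $v_{\ell}$ preserves supersolvability. The new hyperplanes are the $\{x_{v_{\ell}}=x_{w}\}$ for $w\in N(v_{\ell})$, and the crucial feature of a perfect elimination ordering is that $N(v_{\ell})$ is a clique of $G_{\ell-1}$; consequently, for any two new hyperplanes $\{x_{v_{\ell}}=x_{w}\}$ and $\{x_{v_{\ell}}=x_{w'}\}$ their common intersection already lies on the old hyperplane $\{x_{w}=x_{w'}\}\in\mathcal{A}(G_{\ell-1})$. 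This is exactly the condition needed to realize the set of new hyperplanes as a modular coatom of $L(\mathcal{A}(G))$ and to deduce supersolvability of $\mathcal{A}(G)$ from that of $\mathcal{A}(G_{\ell-1})$ via the standard addition criterion for supersolvable arrangements. (Equivalently, one can work entirely inside the bond lattice and check that the flats determined by $G_{1},\dots,G_{\ell}$ form a modular maximal chain.)

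For $(3)\Rightarrow(1)$ I would prove the contrapositive. If $G$ is not chordal it contains a chordless cycle $C_{n}$ with $n\ge 4$ as a full subgraph, and $\mathcal{A}(C_{n})$ is the localization of $\mathcal{A}(G)$ at the flat $\{x_{i}=x_{j}\text{ for all }i,j\in V(C_{n})\}$; since every localization of a free arrangement is free, it is enough to see that $\mathcal{A}(C_{n})$ is not free for $n\ge 4$. Here I would use that the characteristic polynomial of $\mathcal{A}(C_{n})$ equals the chromatic polynomial $(t-1)^{n}+(-1)^{n}(t-1)=t(t-1)q(t)$ with $\deg q=n-2$, the roots of $q$ being the solutions of $(t-1)^{n-1}=(-1)^{n+1}$ other than $t=0$. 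For $n\ge 4$ these form a set of $n-1$ equally spaced points on a circle, not all of which are real, so $q$ has a non-real root; hence $\chi(\mathcal{A}(C_{n}),t)$ does not split into real (a fortiori integer) linear factors, and Terao's factorization theorem forbids freeness.

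I expect the main obstacle to be the inductive step of $(1)\Rightarrow(2)$: pinning down the precise addition criterion for supersolvable arrangements and verifying that the clique condition supplies a genuine modular coatom at each stage (equivalently, proving modularity of the relevant chain of flats in the bond lattice). The remaining ingredients — supersolvable implies free, localizations of free arrangements are free, and Terao's factorization theorem — are standard tools that I would simply cite.
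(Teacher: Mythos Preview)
The paper does not prove Theorem~\ref{Stanley graphic arrangement}; it is quoted from Stanley and used as a known result, so there is no proof in the paper to compare your proposal against. Your argument is correct and follows the standard route: the perfect-elimination-ordering construction for $(1)\Rightarrow(2)$ matches exactly the Bj{\"o}rner--Edelman--Ziegler filtration criterion recalled in the paper as Theorem~\ref{BEZ}, and the remaining implications are handled by Jambu--Terao, the stability of freeness under localization, and Terao's factorization theorem.

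One small remark: for $(3)\Rightarrow(1)$ the paper's own toolkit offers a cleaner alternative to your chromatic-polynomial computation. The cycle $C_{n}$ is a matroid circuit, so $\mathcal{A}(C_{n})$ is a circuit in the sense of Section~\ref{sec:arrangement}; Corollary~\ref{circuit nonfree} then gives non-freeness of $\mathcal{A}(C_{n})$ for $n\ge 4$ directly from the Rose--Terao/Yuzvinsky theorem on generic arrangements, without having to locate the complex roots of $(t-1)^{n-1}=(-1)^{n+1}$. Your factorization argument is perfectly valid, but the circuit approach is the one that generalizes to the signed setting used later in the paper (see Proposition~\ref{frame circuit nonfree}).
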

Thus, the freeness of Weyl subarrangements of type $ A_{\ell} $ is completely characterized in combinatorial terms. 
For a Weyl arrangement of arbitrary type, only sufficient condition for freeness is known by \cite{abe2016freeness-jotems}. 
Except Weyl arrangements of type $A_{\ell}$, any characterization of the freeness of Weyl subarrangements is not known. 

In this paper, we study the freeness of Weyl subarrangements of type $ B_{\ell} $. 
For our purpose, we define a signed graph and the corresponding arrangement as follows. 
See Zaslavsky \cite{zaslavsky1982signed-dam} for a general treatment of signed graphs.
\begin{definition}
A \textbf{signed graph} is a tuple $ G=(G^{+},G^{-},L_{G}) $, where $ G^{+}=(V_{G},E_{G}^{+}) $ and $ G^{-}=(V_{G},E_{G}^{-}) $ are simple graphs on the same vertex set $ V_{G} $ and $ L_{G} $ is a subset of $ V_{G} $, which is called the set of \textbf{loops}. 
An element of $ E_{G}^{+} $ (resp. $ E_{G}^{-} $) is called a \textbf{positive edge} (resp. a \textbf{negative edge}). 
When we do not consider loops, we write a signed graph as $ (G^{+}, G^{-}) $. 
\end{definition}
\begin{definition}
Let $ G=(G^{+},G^{-},L_{G}) $ be a signed graph on vertex set $ \{1, \dots, \ell\} $. 
Define the \textbf{signed-graphic arrangement} $ \mathcal{A}(G) $ in the $ \ell $-dimensional vector space over $ \mathbb{R} $ (or any field of characteristic zero) by 
\begin{multline*}
\mathcal{A}(G) \coloneqq \Set{\{x_{i}-x_{j}=0 | \{i,j\} \in E_{G}^{+} \}} \\
\cup \Set{\{x_{i}+x_{j}=0 | \{i,j\} \in E_{G}^{-} \}} 
\cup \Set{ \{x_{i}=0\} | i \in L_{G}}. 
\end{multline*}
\end{definition}
Note that every simple graph $ G $ on $ \ell $ vertices can be naturally regarded as a signed graph $ G=(G, \overline{K}_{\ell}, \varnothing) $, where $ \overline{K}_{\ell} $ denotes the edgeless graph on $ \ell $ vertices, and hence a signed-graphic arrangement is a generalization of a graphic arrangement. 

We call the signed graph $ B_{\ell}\coloneqq(K_{\ell},K_{\ell},\{1,\dots,\ell\}) $ the \textbf{complete signed graph with loops}, where $ K_{\ell} $ denotes the complete graph on $ \ell $ vertices.  
The corresponding signed-graphic arrangement $ \mathcal{A}(B_{\ell}) $ is the Weyl arrangement of type $ B_{\ell} $. 
Moreover, every subarrangement of $ \mathcal{A}(B_{\ell}) $ is a signed-graphic arrangement of some signed graph, and vise versa. 

Zaslavsky (Theorem \ref{Zaslavsky SS}) characterized supersolvability of signed-graphic arrangements. 
Edelman and Reiner characterized freeness and supersolvability of arrangements between type $ A_{\ell-1} $ and type $ B_{\ell} $. 
The signed graphs $ G $ corresponding to these arrangements satisfy $ G^{+} = K_{\ell} $. 
Figure \ref{Fig:known results} summarizes the known results for supersolvability and freeness. 

\begin{figure}[t]
\centering
\begin{tabular}{|l|l|}
\hline
Condition & Reference \\ 
\hline
\rule{0pt}{15pt}$ G^{-}=\overline{K}_{\ell}, L_{G}=\varnothing $ & Stanley, Theorem \ref{Stanley graphic arrangement} \\
$ G^{-}=\overline{K}_{\ell} $ & Corollary to \cite[Theorem 1.4]{suyama2015vertex-weighted-a} \\
$ G^{+}=K_{\ell} $ & Edelman and Reiner, Theorem \ref{ER free} and \cite[Theorem 4.15]{edelman1994free} \\
$ G^{+}=\overline{K}_{\ell} $ & Bailey \cite[Theorem 4.2]{bailey????inductively-p} \\
$ G^{-}=K_{\ell} $ & Bailey \cite[Theorem 4.3]{bailey????inductively-p} \\
$ G^{+} = G^{-} $ & Bailey \cite[Theorem 4.4]{bailey????inductively-p} \\
None & Zaslavsky, Theorem \ref{Zaslavsky SS} only for supersolvability \\
\hline
\end{tabular}
\caption{Summary of known results for freeness and supersolvablility}
\label{Fig:known results}
\end{figure}

A \textbf{cycle} of length $ k \geq 3 $ (shortly $ k $-cycle) is a sequence of distinct vertices $ v_{1}, \dots , v_{k} $ with edges $ \{v_{1},v_{2}\}, \dots , \{v_{k-1}, v_{k}\}, \{v_{k},v_{1}\} $ (positive or negative edges are allowed). 
A cycle is called \textbf{balanced} if it has an even number of negative edges. 
Otherwise, we call it \textbf{unbalanced}. 
A signed graph $ G $ is called \textbf{balanced chordal} if every balanced cycle of length at least four has a \textbf{balanced chord}, which is an edge of $ G $ that is not part of the cycle but connects two vertices of the cycle, and moreover that separates the cycle into two balanced cycles. 
Note that balanced chordality has nothing to do with the loop set. 

Our main result is as follows: 
\begin{theorem}\label{main theorem}
Let $ G=(G^{+},G^{-}) $ be a signed graph on vertex set $ V_{G} $. 
Assume that $ G^{+} \supseteq G^{-} $. 
Then the following are equivalent: 
\begin{enumerate}[(1)]
\item \label{main theorem 1} $ G $ is balanced chordal. 
\item \label{main theorem 2} $ \mathcal{A}(G^{+}, G^{-}, V_{G}) $ is free. 
\item \label{main theorem 3} $ \mathcal{A}(G^{+}, G^{-}, L) $ is free for some loop set $ L \subseteq V_{G} $. 
\end{enumerate}
\end{theorem}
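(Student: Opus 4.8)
\medskip

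The plan is to prove the cycle of implications $\eqref{main theorem 2}\Rightarrow\eqref{main theorem 3}\Rightarrow\eqref{main theorem 1}\Rightarrow\eqref{main theorem 2}$. The implication $\eqref{main theorem 2}\Rightarrow\eqref{main theorem 3}$ is immediate by taking $L=V_G$. For the core equivalences I would first develop a combinatorial normal form: since $G^{+}\supseteq G^{-}$, every negative edge $\{i,j\}$ is also a positive edge, so the hyperplane pair $\{x_i-x_j=0\},\{x_i+x_j=0\}$ appears, and I would show that balanced chordality of $G$ is governed by the underlying graph $G^{+}$ together with the ``sign pattern'' of short cycles. The key structural lemma to isolate is: if $G$ is balanced chordal and has at least one edge, then there is a vertex $v$ that is \emph{simplicial} in $G^{+}$ (its $G^{+}$-neighborhood is a clique) and such that the signs on the edges incident to $v$ are, up to switching, ``consistent'' with the signs inside that clique — equivalently, $v$ together with its neighborhood contains no unbalanced triangle forced to be chordless. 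This is the signed analogue of the perfect elimination ordering used in Stanley's theorem, and establishing that balanced chordal signed graphs (with $G^{+}\supseteq G^{-}$) admit such a ``balanced simplicial vertex'' ordering is where most of the combinatorial work lies.

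\medskip

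For $\eqref{main theorem 3}\Rightarrow\eqref{main theorem 1}$ I would argue contrapositively using Terao's factorization theorem together with the combinatorics of the intersection lattice. If $G$ is not balanced chordal, it contains a balanced cycle $C$ of length $k\ge 4$ with no balanced chord; I would restrict $\mathcal{A}(G^{+},G^{-},L)$ to the coordinate subspace spanned by the vertices of $C$ (a localization of the arrangement, hence free if the whole is free) and show that this restriction, which is the signed-graphic arrangement of the induced signed subgraph on $V(C)$, is not free. Because $C$ is balanced, after a switching it becomes an ordinary $k$-cycle with no chord at all in the relevant sense, and its characteristic polynomial does not factor over the integers with the right multiplicities — this mirrors exactly the classical fact that the graphic arrangement of a chordless $k$-cycle ($k\ge4$) is non-free, which one sees from $\chi=(t-1)(t-2)^{k-2}(t-\text{something})$ failing to split, or more robustly from the failure of the ``addition-deletion'' obstruction. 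I would need to check carefully that the induced subgraph really has no balanced chord in the \emph{induced} sense (a chord of the cycle that lies in $G$), not merely in the cycle, but this follows from the definition of balanced chordality being inherited by induced subgraphs.

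\medskip

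For $\eqref{main theorem 1}\Rightarrow\eqref{main theorem 2}$, the natural route is induction via the Addition-Deletion Theorem (or directly via supersolvability, building a modular coatom chain as in Zaslavsky's Theorem \ref{Zaslavsky SS}). Using the balanced simplicial vertex $v$ from the structural lemma, write $\mathcal{A}=\mathcal{A}(G^{+},G^{-},V_G)$ and let $\mathcal{A}'$ be obtained by deleting $v$ (i.e.\ all hyperplanes involving $x_v$, plus the loop hyperplane $\{x_v=0\}$); then $\mathcal{A}'$ is the signed-graphic arrangement of $G\setminus v$, which is again balanced chordal with $G^{+}\supseteq G^{-}$, so free by induction. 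The restriction $\mathcal{A}''$ to $\{x_v=0\}$ should be identified, using simpliciality of $v$, with a signed-graphic arrangement (with loops) on $V_G\setminus\{v\}$ of an \emph{essentially smaller} balanced chordal signed graph — the $G^{+}$-neighbors of $v$ become mutually ``loop-like'' after setting $x_v=0$. The hard part, and the main obstacle, is exactly this restriction computation: verifying that $\mathcal{A}''$ is free (ideally by exhibiting it as a signed-graphic arrangement of a balanced chordal graph with a compatible loop set, so the inductive hypothesis — stated for arbitrary loop set $L$ — applies), and checking the numerical Addition-Deletion condition $|\mathcal{A}|=|\mathcal{A}'|+|\mathcal{A}''|$ on exponents. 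Getting the restriction right, especially tracking how loops and the sign data on the clique $N_{G^{+}}(v)$ interact, is where the argument must be most careful; once that is in place, Addition-Deletion (Theorem in Section \ref{sec:arrangement}) closes the induction and in fact yields supersolvability, matching condition \eqref{main theorem 1}.
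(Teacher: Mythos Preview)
Your $(3)\Rightarrow(1)$ is right in spirit and matches the paper's Lemma~\ref{free => balanced chordal}, though the localization step should be tweaked: the induced subgraph on $V(C)$ may carry extra (unbalanced) chords, so one switches so that $C$ becomes all-positive and then localizes to $\mathcal{A}((G^{\nu})^{+})$, which is non-free because $(G^{\nu})^{+}$ is not chordal.

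The genuine gap is in $(1)\Rightarrow(2)$. Your ``balanced simplicial vertex'' structural lemma is \emph{false}: the paper exhibits (Figure~\ref{example bc comp ss}, right-hand graph) a balanced chordal signed graph on six vertices with $G^{+}\supseteq G^{-}$ and full loops that has \emph{no} signed-simplicial vertex at all. By Theorem~\ref{Zaslavsky SS} its arrangement is therefore not supersolvable, so any approach via a signed perfect elimination ordering or a modular coatom chain cannot succeed---your closing remark ``in fact yields supersolvability'' is impossible. Your Addition-Deletion setup is also off: you delete all hyperplanes through $x_v$ at once, which is not the triple in the Addition-Deletion theorem.

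The paper's route is to replace signed-simplicial vertices by the weaker notion of \emph{divisional} edges: $e$ is divisional if $\chi(G/e,t)$ divides $\chi(G,t)$. The key dichotomy (Lemma~\ref{comp_signed or ss}) says that either $G$ has a signed-simplicial vertex, \emph{or} some minimal vertex separator $S$ of $G^{+}$ has $G[S]$ a complete signed graph with loops; in the latter case a gluing formula for chromatic polynomials (Lemma~\ref{gluing}) produces a divisional edge anyway. One then shows (Lemma~\ref{two non-adj div}) that there is always a divisional edge $e$ with $G/e$ again balanced chordal and $(G/e)^{+}\supseteq(G/e)^{-}$, and concludes freeness by induction using Abe's Division Theorem (Theorem~\ref{Abe Division Theorem}) rather than Addition-Deletion.
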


The organization of this paper is as follows. 
In Section \ref{sec:arrangement}, we give basic definitions and results for free and supersolvable arrangements. 
In Section \ref{sec:simple graphs}, 
we introduce theorems for simple graphs which are required in this paper. 
In Section \ref{sec:signed graphs}, we study singed graphs and the corresponding signed-graphic arrangements. 
In Section \ref{sec:divisional vertex}, we introduce the notion of divisional edges and vertices of signed graphs. 
In Section \ref{sec:proof}, we introduce several lemmas and give a proof of Theorem \ref{main theorem}.

\section{Review of free and supersolvable arrangements}\label{sec:arrangement}
In this section, we review some basic concepts on arrangements. 
A standard reference for the theory of arrangements is \cite{orlik1992arrangements}. 
Throughout this section, the ambient space of an arrangement is the $ \ell $-dimensional vector space $ \mathbb{K}^{\ell} $ over an arbitrary field $ \mathbb{K} $. 
Let $S$ be the symmetric algebra of $ (\mathbb{K}^{\ell})^{\ast} $, which can be identified with the polynomial ring $\mathbb{K}[x_{1},\ldots ,x_{\ell}]$, where $\{x_{1},\ldots ,x_{\ell}\}$ is a basis for $(\mathbb{K}^{\ell})^{*}$.
Let $\Der (S)$ denote the module of derivations of $S$:
\[
\Der(S) \coloneqq \{ \theta \colon S \rightarrow S \mid \theta \text{ is } \mathbb{K} \text{-linear}, \\
\theta (fg)=\theta (f)g+f\theta (g) \text{ for } f,g \in S \}.
\]
For an arrangement $\mathcal{A}$ in $ \mathbb{K}^{\ell} $, the module of logarithmic derivations $D(\mathcal{A})$ of $\mathcal{A}$ is defined by
\begin{align*}
D(\A) &\coloneqq \{ \theta \in \Der(S) \mid \theta (Q(\mathcal{A})) \in Q(\mathcal{A})S \} \\
&= \{ \theta \in \Der(S) \mid \theta(\alpha_{H}) \in \alpha_{H}S \text{ for } H \in \mathcal{A} \},
\end{align*} 
where $\alpha_{H}$ is a linear form such that $\Ker (\alpha_{H})=H$ and $Q(\mathcal{A}) \coloneqq \prod_{H \in \mathcal{A}}\alpha_{H}$ is the defining polynomial of $\mathcal{A}$. 
\begin{definition}
An arrangement $ \mathcal{A} $ is called free if $ D(\mathcal{A}) $ is a free $ S $-module. 
\end{definition}
When $\mathcal{A}$ is free, the module $D(\mathcal{A})$ has a homogeneous basis
$\{ \theta_{1},\ldots ,\theta_{\ell} \}$ and
the degrees $ \deg \theta_{1},\ldots ,\deg \theta_{\ell} $ are called
the \textbf{exponents} of $\mathcal{A}$. 

The rank of an arrangement $ \mathcal{A} $, denoted by $ \rank(\mathcal{A}) $, is the codimension of $ \bigcap_{H \in \mathcal{A}}H $. 
The \textbf{intersection lattice} $L(\mathcal{A})$ is the set of all intersections of hyperplanes in $\mathcal{A}$, which is partially ordered by reverse inclusion: $ X \leq Y \Leftrightarrow Y \subseteq X $. 
We say that $\mathcal{A}$ is \textbf{supersolvable} if $L(\mathcal{A})$ is supersolvable as defined by Stanley \cite{stanley1972supersolvable}.
In this paper, we omit the definition in detail. 
However, supersolvability of arrangements is characterized as follows: 
\begin{theorem}[{Bj{\"o}rner-Edelman-Ziegler \cite[Theorem 4.3]{bjorner1990hyperplane-dcg}}]\label{BEZ}
An arrangement $ \A $ is supersolvable if and only if
there exists a filtration
\begin{align*}
\A = \A_{r} \supseteq \A_{r-1} \supseteq \dots \supseteq \A_{1}
\end{align*}
such that
\begin{enumerate}[(1)]
\item $ \rank(\A_{i}) = i \quad (i=1,2,\dots, r) $.
\item For every $ i \geq 2 $ and any distinct hyperplanes $ H, H^{\prime} \in \A_{i} \setminus \mathcal{A}_{i-1} $, there exists some $ H^{\prime\prime} \in \A_{i-1} $ such that $ H \cap H^{\prime} \subseteq H^{\prime\prime} $. 
\end{enumerate}
\end{theorem}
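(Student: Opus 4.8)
The plan is to prove the characterization of Theorem~\ref{BEZ} as a criterion for supersolvability, by connecting the filtration condition to the modularity of the intersection lattice $L(\A)$ in the sense of Stanley. Since the paper omits the precise lattice-theoretic definition, I would take as a black box the following standard fact about supersolvable geometric lattices: a geometric lattice is supersolvable if and only if it admits a maximal chain $\hat{0} = X_{0} \lessdot X_{1} \lessdot \dots \lessdot X_{r} = \hat{1}$ of modular elements. Translating through the correspondence between the lattice of flats $L(\A)$ and the arrangement, a rank-$i$ flat $X_i$ corresponds to a rank-$i$ localization, and the chain of flats translates into the filtration $\A_1 \subseteq \dots \subseteq \A_r = \A$ where $\A_i$ consists of those hyperplanes containing the codimension-$i$ subspace $X_i$ (equivalently, the hyperplanes whose defining forms lie in the span corresponding to $X_i$).

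The key steps, carried out in order, are as follows. First I would set $r = \rank(\A)$ and recall that each element $X$ of $L(\A)$ determines a localized subarrangement $\A_X \coloneqq \Set{H \in \A | X \subseteq H}$, and that $\rank(\A_X) = \operatorname{codim} X$. Second, I would show that condition~(1) together with condition~(2) of the filtration is precisely the arrangement-theoretic transcription of \emph{modularity} of the corresponding chain. The crucial observation is that for a flat $X$, modularity of $X$ in a geometric lattice is equivalent to the statement that for any two hyperplanes $H, H'$ not below $X$ but whose meet lies just above the appropriate rank, there is a hyperplane $H''$ through $X$ separating them; this is exactly the covering-pair condition~(2). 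Third, given a modular maximal chain, I would define $\A_i$ to be the localization at the rank-$i$ flat $X_{r-i}$ (or rather the subarrangement of hyperplanes meeting the chain appropriately), verify $\rank(\A_i) = i$ using semimodularity of the geometric lattice, and check condition~(2) directly from modularity of each $X_i$. Conversely, given a filtration satisfying~(1) and~(2), I would recover a chain of flats and show each is modular by reversing this translation.

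The main obstacle I anticipate is the faithful translation between the two incidence conditions: Stanley's definition of a modular element uses the lattice identity $\operatorname{rk}(X) + \operatorname{rk}(Y) = \operatorname{rk}(X \vee Y) + \operatorname{rk}(X \wedge Y)$ for all $Y$, whereas condition~(2) of Theorem~\ref{BEZ} is stated only for \emph{atoms} (hyperplanes). The heart of the argument is the lemma that, in a geometric lattice, an element $X$ is modular if and only if the modular identity holds for all atoms not below $X$ -- this reduction from arbitrary $Y$ to atoms is what makes the hyperplane-level condition~(2) suffice. I would prove this reduction by induction on $\operatorname{rk}(Y)$, using the exchange property of the matroid underlying $L(\A)$ to decompose $Y$ as a join of atoms and propagate the modular identity upward. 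Once this reduction lemma is in place, both directions of the equivalence follow by the chain-of-localizations construction described above, with the rank bookkeeping handled by semimodularity.
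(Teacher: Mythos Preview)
The paper does not give a proof of Theorem~\ref{BEZ} at all: it is quoted verbatim from Bj{\"o}rner--Edelman--Ziegler with only the citation, and the surrounding text even says that the lattice-theoretic definition of supersolvability is omitted. So there is no ``paper's own proof'' to compare your proposal against; the authors treat this as a black box.

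That said, your outline is essentially the standard argument one finds in the cited reference, and it is broadly correct. One point deserves sharpening. You frame the key reduction as ``an element $X$ is modular if and only if the modular identity holds for all atoms not below $X$,'' to be proved by induction on $\operatorname{rk}(Y)$. The cleaner and more commonly used statement is slightly different: in a geometric lattice, a coatom $X$ is modular if and only if for any two atoms $p,q \nleq X$ there is an atom $r \leq X$ with $r \leq p \vee q$; and this is exactly condition~(2) at the top level $\A_{r-1} \subseteq \A_r$. One then proceeds inductively down the chain, using that the interval $[\hat 0, X_{r-1}]$ is again geometric and that modularity in $L(\A)$ restricts to modularity there. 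Your plan to induct on $\operatorname{rk}(Y)$ via the exchange property would also work, but the coatom formulation matches the shape of condition~(2) more directly and avoids having to track the full rank identity at each step. Also be careful with the correspondence: the $\A_i$ in the filtration are not assumed a priori to be localizations, so part of the forward direction is checking that conditions~(1) and~(2) force each $\A_i$ to equal $\A_{X_i}$ for the flat $X_i = \bigcap_{H \in \A_i} H$.
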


Jambu and Terao revealed a relation between supersolvability and freeness. 
\begin{theorem}[Jambu-Terao {\cite[Theorem 4.2]{jambu1984free-aim}}]\label{JT SS=>free}
Every supersolvable arrangement is free. 
\end{theorem}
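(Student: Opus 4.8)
The plan is to prove the statement by induction on the rank $r = \rank(\A)$, using the filtration supplied by Theorem \ref{BEZ} together with Terao's Addition--Deletion Theorem as the engine that transports freeness from a smaller arrangement to a larger one. Concretely, I would strengthen the statement to an explicit exponent formula: a supersolvable $\A$ with filtration $\A = \A_{r} \supseteq \dots \supseteq \A_{1}$ (and $\A_{0} \coloneqq \varnothing$) is free with exponents $(m_{1}, \dots, m_{r}, 0, \dots, 0)$, where $m_{i} \coloneqq |\A_{i} \setminus \A_{i-1}|$. Proving this sharpened form is what makes the induction self-propagating, since the Addition--Deletion Theorem can only be applied when the exponents of the three arrangements in a triple line up. The base case $r \leq 1$ is immediate: condition (1) of Theorem \ref{BEZ} forces $\A_{1}$ to be a single hyperplane, which is free with exponents $(1,0,\dots,0)$.

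For the inductive step I would regard $\A_{r-1}$ as a \emph{modular coatom} of $\A$: it has rank $r-1$, and condition (2) of Theorem \ref{BEZ} says that for any two distinct $H, H' \in \A \setminus \A_{r-1}$ there is some $H'' \in \A_{r-1}$ with $H \cap H' \subseteq H''$. Since $\A_{r-1}$ is supersolvable of rank $r-1 < r$, the induction hypothesis gives that it is free with exponents $(m_{1},\dots,m_{r-1},0,\dots,0)$. I would then add the hyperplanes of $\A \setminus \A_{r-1} = \{H_{1},\dots,H_{m_{r}}\}$ one at a time, setting $\B_{0} \coloneqq \A_{r-1}$ and $\B_{j} \coloneqq \B_{j-1} \cup \{H_{j}\}$, and apply the addition half of the Addition--Deletion Theorem to the triple $(\B_{j}, \B_{j-1}, \B_{j}^{H_{j}})$, where $\B_{j}^{H_{j}} \coloneqq \{H_{j} \cap K \mid K \in \B_{j-1}\}$ is the restriction to $H_{j}$.

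The heart of the argument, and the step I expect to be the main obstacle, is the identification of the restriction. I would first show $\B_{j}^{H_{j}} = \A_{r-1}^{H_{j}}$: for a previously added $H_{k}$ with $k<j$, the modular condition gives $H'' \in \A_{r-1}$ with $H_{k} \cap H_{j} \subseteq H'' \cap H_{j}$, and since both $H_{k} \cap H_{j}$ and $H'' \cap H_{j}$ have codimension two in the ambient space, they coincide, so $H_{k}$ contributes no hyperplane to the restriction beyond those already coming from $\A_{r-1}$. Thus the new hyperplanes never interact on $H_{j}$, and the restriction is governed entirely by the coatom. Next I would check that $\A_{r-1}^{H_{j}}$ is itself supersolvable of rank $r-1$: restricting the chain $\A_{1} \subseteq \dots \subseteq \A_{r-1}$ to $H_{j}$ yields a filtration whose rank and modularity conditions descend from those of the original chain, and the delicate point is to verify that no level collapses, i.e. that $|\A_{k}^{H_{j}} \setminus \A_{k-1}^{H_{j}}| = m_{k}$, so that the induction hypothesis yields freeness of $\A_{r-1}^{H_{j}}$ with exponents exactly $(m_{1},\dots,m_{r-1},0,\dots,0)$. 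Granting this, the Addition--Deletion Theorem applies at each step: $\exp(\B_{j-1}) = (m_{1},\dots,m_{r-1}, j-1, 0,\dots,0)$ and $\exp(\B_{j}^{H_{j}}) = (m_{1},\dots,m_{r-1},0,\dots,0)$ match, forcing $\B_{j}$ free with the $r$-th exponent raised to $j$. After the final addition, $\B_{m_{r}} = \A$ is free with exponents $(m_{1},\dots,m_{r},0,\dots,0)$, completing the induction. The only genuinely technical work lies in the exponent bookkeeping for the restricted chain and in confirming that the modular structure of Theorem \ref{BEZ} really does descend to each new hyperplane.
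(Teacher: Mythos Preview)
The paper does not prove this theorem at all: it is quoted as a background result with a citation to Jambu--Terao, and no argument is given. So there is no ``paper's own proof'' to compare against.

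That said, your plan is the classical one (essentially the proof that supersolvable implies inductively free, as in Orlik--Terao), and it is correct. Two remarks on the step you yourself flag as delicate. First, you should note that the BEZ conditions force $\A_{r-1}=\A_{X_{r-1}}$ where $X_{r-1}=\bigcap\A_{r-1}$: if some $H\in\A_r\setminus\A_{r-1}$ contained $X_{r-1}$, then for any $H'\in\A_r\setminus\A_{r-1}$ with $H'\not\supseteq X_{r-1}$ the condition $H\cap H'\subseteq H''\in\A_{r-1}$ gives $H\cap H'=H\cap H''\supseteq X_{r-1}$, whence $H'\supseteq X_{r-1}$ after all; this forces every new hyperplane to contain $X_{r-1}$ and contradicts $\rank(\A_r)=r$. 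Second, once you know $H_j\not\supseteq X_{r-1}$, the non-collapse claim is immediate: for distinct $K_1,K_2\in\A_{r-1}$ one has $K_1\cap K_2\supseteq X_{r-1}$, so $H_j\supseteq K_1\cap K_2$ would give $H_j\supseteq X_{r-1}$. Hence $K\mapsto K\cap H_j$ is injective on $\A_{r-1}$, the restricted chain has exactly the block sizes $m_1,\dots,m_{r-1}$, and the Addition--Deletion bookkeeping goes through exactly as you describe. With these two observations inserted, your proof is complete.
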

Supersolvability is a combinatorial property, that is, it is determined by the intersection lattice. 
It is conjectured that freeness of arrangements for a fixed field is also a combinatorial property (Terao Conjecture \cite{terao1983exponents}). 

An arrangement $ \mathcal{A} $ is said to be \textbf{independent} if $ \rank(\mathcal{A}) = |\mathcal{A}| $. 
Call $ \mathcal{A} $ \textbf{dependent} if $ \rank(\mathcal{A}) < |\mathcal{A}| $. 
It is easy to show that every independent arrangement is supersolvable and hence free. 
An arrangement $ \mathcal{A} $ is called \textbf{generic} if every subarrangement of cardinality $ \rank(\mathcal{A}) $ is independent. 
The following theorem states that, except for trivial cases, generic arrangements are non-free. 
\begin{theorem}[Rose-Terao {\cite{rose1991free-joa}}, Yuzvinsky {\cite{yuzvinsky1991free-joa}}]\label{RT_Y generic}
Let $ \mathcal{A} $ be a generic arrangement. 
Suppose that $ |\mathcal{A}| > \rank(\mathcal{A}) \geq 3 $. 
Then $ \mathcal{A} $ is non-free. 
\end{theorem}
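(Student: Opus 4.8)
The plan is to prove non-freeness by showing that the characteristic polynomial $\chi(\mathcal{A},t)$ (the characteristic polynomial of $\mathcal{A}$, defined through the intersection lattice) cannot have only real roots, which contradicts Terao's factorization theorem: if an arrangement is free with exponents $d_{1},\dots,d_{\ell}$, then $\chi(\mathcal{A},t)=\prod_{i=1}^{\ell}(t-d_{i})$ with each $d_{i}$ a nonnegative integer, so in particular all roots are real. Thus it suffices to exhibit a non-real root of $\chi(\mathcal{A},t)$. The standard inputs I would assume are Whitney's theorem expressing $\chi(\mathcal{A},t)$ as a signed sum over central subarrangements, Terao's factorization theorem, and Newton's inequalities for real-rooted real polynomials.

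First I would reduce to the essential case. Set $r\coloneqq\rank(\mathcal{A})$ and $n\coloneqq|\mathcal{A}|$, so by hypothesis $n>r\geq 3$. Passing to the essentialization $\mathcal{A}^{\mathrm{ess}}$ in $\mathbb{K}^{r}$ preserves freeness and induces an isomorphism of intersection lattices, under which the rank and cardinality of every subarrangement are unchanged; hence $\mathcal{A}^{\mathrm{ess}}$ is again generic, with $|\mathcal{A}^{\mathrm{ess}}|=n$ and $\rank(\mathcal{A}^{\mathrm{ess}})=r$, and $\chi(\mathcal{A},t)=t^{\ell-r}\chi(\mathcal{A}^{\mathrm{ess}},t)$. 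So a non-real root of $\chi(\mathcal{A}^{\mathrm{ess}},t)$ yields one of $\chi(\mathcal{A},t)$, and it is enough to treat $\mathcal{A}^{\mathrm{ess}}$. Note also that genericity forces every subarrangement of size at most $r$ to be independent (a smaller dependent set could be completed, using $n>r$, to a dependent $r$-set).

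The heart of the argument is to compute only the top three coefficients of $\chi(\mathcal{A}^{\mathrm{ess}},t)=\sum_{j=0}^{r}c_{j}t^{j}$. By Whitney's theorem the coefficient of $t^{r-j}$ collects the contributions of the rank-$j$ subarrangements. Since $\mathcal{A}^{\mathrm{ess}}$ is generic and $r\geq 3$, every subarrangement of size at most $2$ is independent, and a subarrangement has rank $2$ exactly when it has two hyperplanes (any set of three is independent, hence of rank $3$); so the rank-$0$, rank-$1$, and rank-$2$ subarrangements number $1$, $n$, and $\binom{n}{2}$, giving
\[
c_{r}=1,\qquad c_{r-1}=-n,\qquad c_{r-2}=\binom{n}{2}.
\]
If $\mathcal{A}^{\mathrm{ess}}$ were free, then $\chi(\mathcal{A}^{\mathrm{ess}},t)$ would be real-rooted, so Newton's inequalities apply to the normalized coefficients $\widetilde{c}_{j}\coloneqq c_{j}/\binom{r}{j}$; in particular $\widetilde{c}_{r-1}^{\,2}\geq\widetilde{c}_{r}\,\widetilde{c}_{r-2}$. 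Substituting the values above, this reads $n^{2}/r^{2}\geq n(n-1)/\bigl(r(r-1)\bigr)$, which after clearing the positive denominators is equivalent to $n\leq r$, contradicting $n>r$. Hence $\chi(\mathcal{A}^{\mathrm{ess}},t)$ has a non-real root, so $\mathcal{A}^{\mathrm{ess}}$, and therefore $\mathcal{A}$, is non-free.

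The main obstacle I anticipate is producing a non-real root uniformly for every dimension $r\geq 3$, since directly factoring $\chi$ is unwieldy as $r$ grows. The device that resolves this is to bypass the full polynomial and test only the single Newton inequality linking the three leading coefficients, which collapses to the clean numerical condition $n\leq r$; the hypothesis $r\geq 3$ is exactly what guarantees that $c_{r-2}$ equals $\binom{n}{2}$ rather than mixing with lower-rank contributions. As a fallback I would keep in mind the homological route of Rose--Terao and Yuzvinsky, namely computing the projective dimension of $D(\mathcal{A})$ (equivalently of its Euler-reduced summand) via an explicit minimal free resolution for generic arrangements; this gives the exact projective dimension but is considerably heavier than what the stated non-freeness conclusion requires.
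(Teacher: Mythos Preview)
The paper does not supply its own proof of this theorem; it is quoted with citations to Rose--Terao and Yuzvinsky and then used as a black box (to derive Corollary~\ref{circuit nonfree}). So there is nothing in the paper to compare your argument against.

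Your argument is correct. For a generic essential arrangement of rank $r$ with $n$ hyperplanes, the intersection lattice below the top is Boolean, so indeed $c_{r}=1$, $c_{r-1}=-n$, $c_{r-2}=\binom{n}{2}$ once $r\geq 3$; Terao's factorization (which the paper also cites) forces $\chi$ to be real-rooted if $\mathcal{A}$ is free, and the first Newton inequality $\widetilde{c}_{r-1}^{\,2}\geq\widetilde{c}_{r}\,\widetilde{c}_{r-2}$ then reduces to $n\leq r$, contradicting the hypothesis. This is a genuinely different, and much lighter, route than the cited sources: Rose--Terao and Yuzvinsky work homologically, producing an explicit minimal free resolution of (the reduced part of) $D(\mathcal{A})$ and reading off a positive projective dimension. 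Their method gives strictly more---the exact projective dimension---at the cost of substantial machinery; your method gives exactly the non-freeness statement needed here using only two facts already present in the paper (Terao's factorization and the combinatorics of $L(\mathcal{A})$), plus the elementary Newton/variance inequality.
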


An arrangement $ \mathcal{A} $ is called a \textbf{circuit} if $ \mathcal{A} $ is minimally dependent, that is, $ \mathcal{A} $ is dependent but $ \mathcal{A} \setminus \{H\} $ is independent for any $ H \in \mathcal{A} $. 
This terminology stems from matroid theory. 
We obtain the following corollary of Theorem \ref{RT_Y generic}. 
\begin{corollary}\label{circuit nonfree}
If an arrangement $ \mathcal{A} $ is a circuit, then $ \mathcal{A} $ is generic. 
Moreover, if $ |\mathcal{A}| \geq 4 $, then $ \mathcal{A} $ is non-free. 
\end{corollary}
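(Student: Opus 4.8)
The plan is to reduce everything to computing $\rank(\mathcal{A})$ and then invoke Theorem~\ref{RT_Y generic}. Write $n \coloneqq |\mathcal{A}|$. First I would pin down the rank: since $\mathcal{A}$ is dependent, $\rank(\mathcal{A}) \leq n-1$; on the other hand, for any $H \in \mathcal{A}$ the subarrangement $\mathcal{A} \setminus \{H\}$ is independent, so $\rank(\mathcal{A} \setminus \{H\}) = n-1$, and since rank is monotone under inclusion this forces $\rank(\mathcal{A}) \geq n-1$. Hence $\rank(\mathcal{A}) = n-1$.

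Next I would verify genericity directly from the definition. A subarrangement $\mathcal{B} \subseteq \mathcal{A}$ of cardinality $\rank(\mathcal{A}) = n-1$ omits exactly one hyperplane, i.e. $\mathcal{B} = \mathcal{A} \setminus \{H\}$ for a unique $H \in \mathcal{A}$; by the circuit hypothesis this is independent. Thus every subarrangement of cardinality $\rank(\mathcal{A})$ is independent, which is precisely the definition of $\mathcal{A}$ being generic.

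Finally, for the non-freeness statement, suppose $n \geq 4$. Then $\rank(\mathcal{A}) = n-1 \geq 3$, and $|\mathcal{A}| = n > n-1 = \rank(\mathcal{A})$, so the hypotheses of Theorem~\ref{RT_Y generic} are met for the generic arrangement $\mathcal{A}$, and we conclude that $\mathcal{A}$ is non-free.

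There is essentially no hard step here; the only point requiring a moment's care is the observation that, for an $n$-element arrangement, the subarrangements of size exactly $n-1$ are precisely the single-hyperplane deletions $\mathcal{A}\setminus\{H\}$, so that the circuit condition matches the genericity condition verbatim once the rank has been identified.
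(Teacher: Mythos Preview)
Your argument is correct and is exactly the natural unpacking of the corollary: the paper itself states the result without proof, treating it as an immediate consequence of Theorem~\ref{RT_Y generic}, and your computation $\rank(\mathcal{A})=|\mathcal{A}|-1$ followed by the identification of the $(n-1)$-element subarrangements with the single deletions $\mathcal{A}\setminus\{H\}$ is precisely what is left implicit.
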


For every arrangement $ \mathcal{A} $, the one-variable \textbf{M{\"o}bius function} $ \mu(X) $ on $ L(\mathcal{A}) $ is defined recursively by 
\begin{align*}
\sum_{Y \leq X} \mu(Y) = \delta_{\hat{0} \, X}, 
\end{align*}
where $ \delta_{\hat{0} \, X} $ denotes the Kronecker delta and $ \hat{0} $ denotes the minimal element of $ L(\mathcal{A}) $, namely the ambient space. 
Moreover, we can associate to $ \mathcal{A} $ a polynomial $ \chi(\mathcal{A}, t) \in \mathbb{Z}[t] $, called the \textbf{characteristic polynomial}, defined by 
\begin{align*}
\chi(\mathcal{A},t) \coloneqq \sum_{X \in L(\mathcal{A})} \mu(X)t^{\dim X}.  
\end{align*}
The characteristic polynomial of an arrangement is one of the most important invariants. 
Terao \cite{terao1981generalized-im} showed that the characteristic polynomial of a free arrangement can be factored into a product of linear factors over $ \mathbb{Z} $ with non-negative roots. 

For an element $ X \in L(\mathcal{A}) $, we define the \textbf{localization} $ \mathcal{A}_{X} $ and the \textbf{restriction} $ \mathcal{A}^{X} $ by 
\begin{align*}
\mathcal{A}_{X} &\coloneqq \Set{H \in \mathcal{A} | H \supseteq X }, \\
\mathcal{A}^{X} &\coloneqq \Set{H \cap X | H \in \mathcal{A} \setminus \mathcal{A}_{X}}. 
\end{align*}

\begin{proposition}[Stanley {\cite[Proposition 3.2]{stanley1972supersolvable}}]\label{Stanley SS lattice}
Every localization and every restriction of a supersolvable arrangement is supersolvable. 
\end{proposition}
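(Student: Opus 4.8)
The plan is to reduce both assertions to a statement about the intersection lattice $L(\A)$ and to exploit Theorem~\ref{BEZ}. Writing $\hat{1}$ for the maximal element of $L(\A)$ (the center $\bigcap_{H\in\A}H$), one checks directly from the definitions that the localization $\A_{X}$ has intersection lattice the lower interval $[\hat{0},X]=\Set{Y\in L(\A) | Y\supseteq X}$, while the restriction $\A^{X}$ has intersection lattice the upper interval $[X,\hat{1}]=\Set{Y\in L(\A) | Y\subseteq X}$. Thus it suffices to show that both the lower and the upper interval at $X$ inherit supersolvability, and the natural device is to start from a supersolvability filtration (equivalently, a maximal chain of modular flats) of $\A$ and transport it to each interval, by intersecting with, respectively projecting onto, $X$.

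For the localization the argument is short and stays within Theorem~\ref{BEZ}. Starting from a filtration $\A=\A_{r}\supseteq\dots\supseteq\A_{1}$ as in Theorem~\ref{BEZ}, set $\B_{i}\coloneqq(\A_{i})_{X}=\A_{i}\cap\A_{X}$. If $H,H'\in\B_{i}\setminus\B_{i-1}$ are distinct, then $H,H'\in\A_{i}\setminus\A_{i-1}$, so some $H''\in\A_{i-1}$ satisfies $H\cap H'\subseteq H''$; the key observation is that $H,H'\supseteq X$ forces $X\subseteq H\cap H'\subseteq H''$, whence $H''\in\A_{X}$ and therefore $H''\in\B_{i-1}$. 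Thus condition~(2) is inherited verbatim. The same observation controls the rank: if the intersection of two hyperplanes added at step $i$ lies in some hyperplane of $\B_{i-1}$, then, since that hyperplane contains the center of $\B_{i-1}$, a one-line linear-algebra computation shows the two new hyperplanes cut this center in the same subspace, so each step lowers the center by at most one dimension. Deleting the rank-stagnant steps then yields a filtration of $\A_{X}$ satisfying both conditions of Theorem~\ref{BEZ}; equivalently, intersecting a modular maximal chain of $\A$ with $X$ produces a modular chain of the lower interval, giving the same conclusion more cleanly.

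The restriction is the delicate case, and I expect it to be the main obstacle. The naive attempt, the induced filtration $(\A_{i})^{X}$, can fail condition~(2): for distinct hyperplanes $H\cap X$ and $H'\cap X$ of $\A^{X}$ the witness $H''$ supplied by $\A$ may itself contain $X$, in which case $H''\cap X=X$ is not a hyperplane of $\A^{X}$ and carries no information about $(H\cap X)\cap(H'\cap X)$. The remedy is to argue on the upper interval directly: from a maximal chain $\hat{0}=Y_{0}<Y_{1}<\dots<Y_{r}=\hat{1}$ of modular flats of $L(\A)$, form the joins $Y_{i}\vee X$, which form a chain in $[X,\hat{1}]=L(\A^{X})$ running from $X$ to $\hat{1}$; after deleting repetitions this is a maximal chain. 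The crux is to verify that each $Y_{i}\vee X$ is modular within the interval $[X,\hat{1}]$, which reduces to a rank identity for modular pairs; once this join-modularity is in place, $\A^{X}$ is supersolvable. If one prefers to avoid redeveloping the theory of modular flats, this last point is precisely Stanley's lattice-theoretic fact that every interval of a supersolvable geometric lattice is again supersolvable, and may be invoked directly.
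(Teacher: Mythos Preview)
The paper does not prove this proposition at all: it is stated as a citation to Stanley's original paper, with no argument given. So there is no proof in the paper to compare your proposal against; effectively the paper's ``proof'' is the reference itself.

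Your localization argument via Theorem~\ref{BEZ} is sound and more explicit than anything in the paper. For the restriction, however, your proposal does not close the loop. You correctly identify that the naive filtration $(\A_i)^X$ can fail and propose instead the chain of joins $Y_i\vee X$; but the crucial step---that $Y_i\vee X$ is modular in $[X,\hat 1]$ whenever $Y_i$ is modular in $L(\A)$---is asserted (``reduces to a rank identity for modular pairs'') rather than proved. This is the heart of the matter and requires an actual computation with the semimodular rank function, or an appeal to the Birkhoff characterization of modular elements. Your final sentence then offers to ``invoke directly'' Stanley's fact that intervals of supersolvable geometric lattices are supersolvable; but that \emph{is} the proposition you are proving, so this closing move is circular. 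If you want a self-contained proof, you must actually carry out the join-modularity verification; otherwise your write-up for the restriction case amounts to the same citation the paper gives.
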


The following results are quite useful for determining whether an arrangement is free or not. 
\begin{proposition}[Orlik-Terao {\cite[Theorem 4.37]{orlik1992arrangements}}]\label{OT_localization}
Every localization of a free arrangement is free. 
\end{proposition}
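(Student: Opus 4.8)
The plan is to prove freeness of the localization $\mathcal{A}_X$ by passing to the local ring of $S$ at the prime ideal of $X$, where $\mathcal{A}_X$ and $\mathcal{A}$ have the \emph{same} module of logarithmic derivations, and then descending freeness from that local ring back to $S$ using the grading. Throughout I set $r \coloneqq \rank(\mathcal{A}_X) = \ell - \dim X$.

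First I would fix coordinates adapted to $X$. One can choose $x_{1}, \dots, x_{\ell}$ so that $X = \{x_{1} = \dots = x_{r} = 0\}$; then every $\alpha_{H}$ with $H \in \mathcal{A}_X$ is a linear form in $x_{1}, \dots, x_{r}$ alone, so the defining condition $\theta(\alpha_{H}) \in \alpha_{H}S$ of $D(\mathcal{A}_X)$ constrains only the first $r$ coefficients of a derivation $\theta = \sum_{i} f_{i}\partial_{x_{i}}$. Since $S$ is free over $S' \coloneqq \mathbb{K}[x_{1}, \dots, x_{r}]$, this yields a product decomposition $D(\mathcal{A}_X) \cong (D(\mathcal{A}_X') \otimes_{S'} S) \oplus S^{\ell - r}$, where $\mathcal{A}_X'$ is the essential arrangement in $\mathbb{K}^{r}$ defined by the same forms. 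In particular $\mathcal{A}_X$ is free if and only if $\mathcal{A}_X'$ is free over $S'$, and the prime ideal $\mathfrak{p} \coloneqq (x_{1}, \dots, x_{r})S$ of $S$ (the polynomials vanishing on $X$) restricts to the irrelevant maximal ideal $\mathfrak{m} \coloneqq (x_{1}, \dots, x_{r})$ of $S'$.

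The second step is the local comparison. Writing $D(\mathcal{A})$ as the finite intersection $\bigcap_{H \in \mathcal{A}} \{\theta \in \Der(S) : \theta(\alpha_{H}) \in \alpha_{H}S\}$ and localizing at $\mathfrak{p}$ (localization is exact, so it commutes with this finite intersection inside $\Der(S)_{\mathfrak{p}} \cong \Der(S_{\mathfrak{p}})$), each $H \notin \mathcal{A}_X$ contributes a vacuous condition, because $\alpha_{H} \notin \mathfrak{p}$ becomes a unit of $S_{\mathfrak{p}}$ and hence $\alpha_{H}S_{\mathfrak{p}} = S_{\mathfrak{p}}$. Thus $D(\mathcal{A})_{\mathfrak{p}} = D(\mathcal{A}_X)_{\mathfrak{p}}$. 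Because $\mathcal{A}$ is free, $D(\mathcal{A})$ is $S$-free, so $D(\mathcal{A})_{\mathfrak{p}}$, and therefore $D(\mathcal{A}_X)_{\mathfrak{p}}$, is free over $S_{\mathfrak{p}}$. Localizing the product decomposition at $\mathfrak{p}$ and using that $S'_{\mathfrak{m}} \to S_{\mathfrak{p}}$ is a faithfully flat local homomorphism (so freeness of a finitely generated module descends along it), I conclude that $D(\mathcal{A}_X')_{\mathfrak{m}}$ is free over $S'_{\mathfrak{m}}$.

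It remains to descend from $S'_{\mathfrak{m}}$ to $S'$, and this is the step where the grading is essential and which I expect to be the crux. The module $D(\mathcal{A}_X')$ is a finitely generated $\mathbb{Z}$-graded $S'$-module, so its Betti numbers $\dim_{\mathbb{K}} \mathrm{Tor}_{i}^{S'}(D(\mathcal{A}_X'), \mathbb{K})$ may be computed after the flat localization at $\mathfrak{m}$, since $\mathbb{K} = S'/\mathfrak{m}$ is already supported at $\mathfrak{m}$; hence the projective dimension of $D(\mathcal{A}_X')$ over $S'$ equals that of $D(\mathcal{A}_X')_{\mathfrak{m}}$ over $S'_{\mathfrak{m}}$, which is zero. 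A finitely generated graded module of projective dimension zero over a polynomial ring is free, so $D(\mathcal{A}_X')$ is $S'$-free, and by the product decomposition $D(\mathcal{A}_X)$ is $S$-free, i.e.\ $\mathcal{A}_X$ is free. The main obstacle is exactly this passage from freeness at the single prime $\mathfrak{p}$ back to global freeness: it would fail for an arbitrary reflexive module, and the two features that rescue it are the reduction to the essential arrangement $\mathcal{A}_X'$ (which relocates us to the irrelevant maximal ideal, where the grading lives) together with the fact that projective dimension of a graded module is detected by localizing there.
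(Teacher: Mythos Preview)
Your argument is correct, and it is essentially the proof given in Orlik--Terao's book at the cited reference. Note, however, that the present paper does not supply its own proof of this proposition: it merely records the statement and attributes it to \cite[Theorem~4.37]{orlik1992arrangements}. So there is nothing in the paper to compare against beyond the citation itself.

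For what it is worth, your write-up matches the standard route: identify $D(\mathcal{A})_{\mathfrak{p}}$ with $D(\mathcal{A}_X)_{\mathfrak{p}}$ (since the $\alpha_H$ with $H\not\supseteq X$ become units), use freeness of $D(\mathcal{A})$ to obtain local freeness at $\mathfrak{p}$, and then exploit the grading to pass from freeness at the irrelevant maximal ideal of the essentialization back to global freeness. The only place one might streamline is step~5: rather than invoking faithfully flat descent along $S'_{\mathfrak m}\to S_{\mathfrak p}$, one can observe directly that $D(\mathcal{A}_X')\otimes_{S'}S$ being free over $S$ already forces $D(\mathcal{A}_X')$ to be free over $S'$, because $S$ is itself faithfully flat (indeed free) over $S'$; this avoids ever passing to $S_{\mathfrak p}$ on the essentialized side. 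Either way the logic is sound.
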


\begin{theorem}[Division Theorem, Abe {\cite[Theorem 1.1]{abe2016divisionally-im}}]\label{Abe Division Theorem}
Let $ \mathcal{A} $ be an arrangement. 
Assume that there exists a hyperplane $ H \in \mathcal{A} $ such that $ \chi(\mathcal{A}^{H}, t) $ divides $ \chi(\mathcal{A},t) $ and $ \mathcal{A}^{H} $ is free. 
Then $ \mathcal{A} $ is free. 
\end{theorem}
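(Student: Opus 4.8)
The plan is to reduce to the essential case and then run the restriction-theoretic machinery built around the Ziegler restriction and Yoshinaga's freeness criterion, with an induction on $\ell = \dim V$. First I would dispose of the trivial dimensions: if $\rank(\mathcal{A}) \le 2$ then $\mathcal{A}$ is free outright, so I may assume $\mathcal{A}$ is essential with $\ell = \rank(\mathcal{A}) \ge 3$. Fix the hyperplane $H$ supplied by the hypothesis, identify $H \cong \mathbb{K}^{\ell-1}$, and form the Ziegler restriction: the multiarrangement $(\mathcal{A}^{H}, m^{H})$ on $H$ with multiplicity $m^{H}(X) = |\mathcal{A}_{X}| - 1$ for each $X \in \mathcal{A}^{H}$. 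The target is Yoshinaga's criterion, which asserts (for $\ell \ge 3$) that $\mathcal{A}$ is free if and only if $(\mathcal{A}^{H}, m^{H})$ is a free multiarrangement and the reduced characteristic polynomial $\chi(\mathcal{A}, t)/(t-1)$ equals the product $\prod_{i}(t - e_{i})$ of the Ziegler exponents $e_{i}$ of $(\mathcal{A}^{H}, m^{H})$. Thus the problem splits into (i) freeness of the Ziegler restriction and (ii) matching of characteristic polynomials.

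For the numerical bookkeeping I would use Terao's factorization theorem: since $\mathcal{A}^{H}$ is free as a simple arrangement, $\chi(\mathcal{A}^{H}, t)$ splits over $\mathbb{Z}$, while the Euler derivation always contributes a factor $(t-1)$ to $\chi(\mathcal{A}, t)$. A degree count shows that the divisibility hypothesis $\chi(\mathcal{A}^{H}, t) \mid \chi(\mathcal{A}, t)$ forces $\chi(\mathcal{A}, t) = (t - a)\,\chi(\mathcal{A}^{H}, t)$ for a single integer $a$, and the standard deletion--restriction relation $\chi(\mathcal{A}, t) = \chi(\mathcal{A} \setminus \{H\}, t) - \chi(\mathcal{A}^{H}, t)$ constrains this rigid factorization further. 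This is the input I would feed into condition (ii) of Yoshinaga's criterion, after reconciling the roots of $\chi(\mathcal{A}^{H}, t)$ with the Ziegler exponents.

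The heart of the argument, and the step I expect to be the main obstacle, is (i): showing that the divisibility together with freeness of the simple restriction $\mathcal{A}^{H}$ forces the Ziegler multiarrangement $(\mathcal{A}^{H}, m^{H})$ to be free. The difficulty is genuine, because the Ziegler multiplicity encodes the global incidence data $|\mathcal{A}_{X}|$, so freeness of $\mathcal{A}^{H}$ as a simple arrangement does not transfer formally to the weighted object. I would induct on $\ell$. The base case $\ell = 3$ is favourable: there $H \cong \mathbb{K}^{2}$, and every multiarrangement on a rank-$2$ arrangement is free, so (i) holds automatically and only the numerical condition of the previous paragraph remains, recovering Yoshinaga's three-dimensional result. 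For $\ell > 3$ the plan is to descend one dimension: localize $\mathcal{A}^{H}$ at its flats, using that localizations of the free arrangement $\mathcal{A}^{H}$ are again free (Proposition \ref{OT_localization}) to obtain local freeness of $(\mathcal{A}^{H}, m^{H})$ at every flat, and then propagate the divisibility hypothesis to a suitable restriction inside $H$ so that the inductive hypothesis applies. Assembling the local pieces into global freeness of the multiarrangement, and verifying that its exponents behave as the divisibility predicts, is precisely the delicate point; here Saito's criterion and the compatibility of characteristic polynomials under the two kinds of restriction (simple versus Ziegler) must be carefully orchestrated.

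Once (i) and (ii) are in hand, Yoshinaga's criterion yields freeness of $\mathcal{A}$, completing the induction. I would close by emphasizing what makes this theorem sharp: unlike Terao's classical Addition--Deletion Theorem, which needs freeness of two of the three arrangements $\mathcal{A}$, $\mathcal{A} \setminus \{H\}$, $\mathcal{A}^{H}$, the Division Theorem extracts freeness of $\mathcal{A}$ from freeness of the single restriction $\mathcal{A}^{H}$ plus a divisibility of characteristic polynomials, which is exactly why the multiarrangement analysis in step (i) cannot be bypassed.
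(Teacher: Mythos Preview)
The paper does not prove this statement. Theorem~\ref{Abe Division Theorem} is quoted from Abe's work \cite[Theorem 1.1]{abe2016divisionally-im} as a black-box tool and is simply invoked later (in Proposition~\ref{ss extension}(\ref{ss extension3}) and Lemma~\ref{division theorem for signed graph}); no argument for it appears anywhere in the present paper. So there is nothing here against which to compare your proposal.

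That said, your outline is broadly faithful to the architecture of Abe's original proof: induction on the ambient dimension, reduction to Yoshinaga's freeness criterion via the Ziegler restriction $(\mathcal{A}^{H}, m^{H})$, and isolation of the crucial step as the passage from freeness of the simple restriction $\mathcal{A}^{H}$ to freeness of the Ziegler multiarrangement. Where your sketch falls short of a proof is precisely at that step, which you yourself flag as ``the main obstacle'' and leave at ``must be carefully orchestrated''. In Abe's argument this is not resolved by a generic local-to-global patching of free multiarrangements; the decisive input is a numerical identity at the level of the second Betti number $b_{2}$ of the intersection lattice. One shows that the divisibility $\chi(\mathcal{A}^{H},t)\mid\chi(\mathcal{A},t)$, together with Terao's factorization for the free $\mathcal{A}^{H}$, forces equality in a general inequality comparing $b_{2}(\mathcal{A})$ with the elementary symmetric function of the candidate exponents, and it is this equality that triggers Yoshinaga's criterion. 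Your proposal gestures at ``matching of characteristic polynomials'' but does not isolate this $b_{2}$ computation, which is where the actual content lies; without it, the inductive step you describe does not close.
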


\section{Preliminaries from simple graphs}\label{sec:simple graphs}
In this section, we will introduce some basic notions about simple graphs. 

\subsection{Threshold graphs}\label{subsec:threshold graph}
As mentioned in Section \ref{sec:introduction}, Edelman and Reiner characterized freeness of subarrangements between type $ A_{\ell-1} $ and $ B_{\ell} $. 
The characterization requires the notion of threshold graphs. 

\begin{definition}
Threshold graphs are defined recursively by the following construction: 
\begin{enumerate}[(1)]
\item The single-vertex graph $ K_{1} $ is threshold. 
\item The graph obtained by adding an isolated vertex to a threshold graph is threshold. 
\item The graph obtained by adding a dominating vertex to a threshold graph is threshold, where a \textbf{dominating vertex} is a vertex which is adjacent to all other vertices. 
\end{enumerate}
\end{definition}

The \textbf{degree} of a vertex $ v $ in a simple graph $ G $ (denoted $ \deg_{G}(v) $) is the number of incident edges. 
\begin{definition}
An ordering $ (v_{1}, \dots, v_{\ell}) $ of the vertices of a simple graph $ G $ is called a \textbf{degree ordering} if $ \deg_{G}(v_{1}) \geq \dots \geq \deg_{G}(v_{\ell}) $. 
An \textbf{initial segment} of an ordering $ (v_{1}, \dots, v_{\ell}) $ is a set $ \{v_{1}, \dots, v_{k}\} $ for some $ k $.  
\end{definition}
A simple graph may admit several degree orderings. 
However, for a threshold graph, there is only one degree ordering up to an automorphism \cite{hammer1981threshold-sjoadm}. 
We will need the following characterization for threshold graphs. 
\begin{theorem}[Golumbic {\cite[Corollary 5]{golumbic1978trivially-dm}}]\label{Golumbic}
Threshold graphs have a forbidden induced subgraph characterization. 
Namely, a simple graph is threshold if and only if it contains no induced subgraph isomorphic $ 2K_{2}, C_{4} $, or $ P_{4} $ (See Figure \ref{Fig:threshold}).
\end{theorem}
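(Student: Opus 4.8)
The plan is to prove both implications by induction on the number of vertices, exploiting the single structural feature shared by the three forbidden graphs: in each of $2K_{2}$, $C_{4}$, and $P_{4}$ every vertex has degree $1$ or $2$, so none of them possesses an isolated vertex nor a dominating vertex. I would record this elementary observation first, since it drives the entire argument.

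For the forward implication, suppose $G$ is threshold, say obtained from a threshold graph $G'$ by adjoining a vertex $v$ that is either isolated or dominating, and assume inductively that $G'$ contains no induced $2K_{2}$, $C_{4}$, or $P_{4}$. If $G$ contained such a forbidden induced subgraph $H$ on a $4$-set $S$, then either $v \notin S$, in which case $H$ is already induced in $G'$, contradicting the inductive hypothesis; or $v \in S$, in which case $v$ is isolated or dominating inside $H$ (its adjacencies in $H$ are exactly its adjacencies in $G$ restricted to $S$), contradicting the observation that none of the forbidden graphs has such a vertex. This settles the direction ``threshold $\Rightarrow$ forbidden-free.''

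The substance of the theorem lies in the converse, which I would reduce to the following key lemma: every nonempty simple graph with no induced $2K_{2}$, $C_{4}$, or $P_{4}$ has an isolated vertex or a dominating vertex. Granting the lemma, the converse follows by induction: removing such a vertex $v$ from $G$ leaves a graph that is still forbidden-free (induced subgraphs of forbidden-free graphs are forbidden-free), hence threshold by the inductive hypothesis, and $G$ is recovered from it by re-adding an isolated or a dominating vertex, which is exactly one of the allowed construction steps.

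To prove the key lemma I would pick a vertex $v$ of maximum degree. If $\deg_{G}(v) = 0$ every vertex is isolated; if $v$ is dominating we are done; so assume $v$ has both a neighbor and a non-neighbor. Write $A = N(v)$ and let $B$ be the set of remaining vertices (the non-neighbors of $v$ other than $v$ itself), so $B \neq \varnothing$. First I would show there are no edges between $A$ and $B$: an edge $a \sim b$ with $a \in A$ and $b \in B$ gives, together with $v$, an induced $P_{3}$, and a degree count using maximality of $\deg_{G}(v)$ (noting $v, b \in N(a) \setminus N(v)$, so $\lvert N(v) \setminus N(a)\rvert \geq 2$) produces a vertex $c \in N(v) \setminus N(a)$ distinct from $a, b, v$; then $\{v, a, b, c\}$ induces a $P_{4}$ (if $b \not\sim c$) or a $C_{4}$ (if $b \sim c$). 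Once $A$ and $B$ are non-adjacent, any edge inside $B$ would be disjoint from any edge $v \sim a$ and so induce $2K_{2}$; hence $B$ is edgeless, and every vertex of $B$ is isolated in $G$. The main obstacle is precisely this lemma: arranging the case analysis so that each of the three forbidden graphs is forced to appear, and in particular the degree-counting step that supplies the auxiliary vertex $c$, which is where maximality of the chosen vertex is genuinely used.
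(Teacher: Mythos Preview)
Your proof is correct. The forward direction is routine, and your key lemma for the converse is argued cleanly: the degree-counting step that produces the auxiliary vertex $c$ is exactly right (since $v,b \in N(a)\setminus N(v)$ forces $|N(v)\setminus N(a)|\ge 2$, and $a$ itself is one such element, so a second element $c$ exists), and the resulting case split into $P_{4}$ or $C_{4}$ works as stated.

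There is nothing to compare against, however: the paper does not prove this theorem at all. It is quoted as an external result from Golumbic's 1978 paper and used as a black box (specifically in the proof of Proposition~\ref{bc threshold}). Your self-contained argument therefore goes beyond what the paper provides.
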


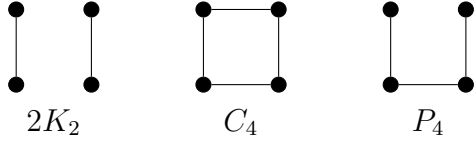
\begin{figure}[t]
\centering
\begin{tikzpicture}
\draw (0,1) node[v](1){};
\draw (0,0) node[v](2){};
\draw (1,0) node[v](3){};
\draw (1,1) node[v](4){};
\draw (0.5,-0.5) node[]{$ 2K_{2} $};
\draw (1)--(2);
\draw (3)--(4);
\end{tikzpicture}
\hspace{10mm}
\begin{tikzpicture}
\draw (0,1) node[v](1){};
\draw (0,0) node[v](2){};
\draw (1,0) node[v](3){};
\draw (1,1) node[v](4){};
\draw (0.5,-0.5) node[]{$ C_{4} $};
\draw (1)--(2)--(3)--(4)--(1);
\end{tikzpicture}
\hspace{10mm}
\begin{tikzpicture}
\draw (0,1) node[v](1){};
\draw (0,0) node[v](2){};
\draw (1,0) node[v](3){};
\draw (1,1) node[v](4){};
\draw (0.5,-0.5) node[]{$ P_{4} $};
\draw (1)--(2)--(3)--(4);
\end{tikzpicture}
\caption{Forbidden induced subgraphs for threshold graphs}\label{Fig:threshold}
\end{figure} 

\subsection{Menger's theorem}
Given a simple graph $ G=(V_{G},E_{G}) $ and a subset $ W \subseteq V_{G} $, let $ G[W] $ denote the subgraph induced by $ W $ and let $ G \setminus W $ denote $ G[V_{G} \setminus W] $. 
For non-adjacent vertices $ a,b $ which belong to the same connected component of $ G $, a non-empty subset $ S \subseteq V_{G} $ is called an \textbf{$ (a,b) $-separator} if the vertices $ a,b $ belong to distinct connected components of $ G \setminus S $. 
An $ (a,b) $-separator is \textbf{minimal} if no proper subset is $ (a,b) $-separator. 
Moreover, a subset $ S \subseteq V_{G} $ is called a \textbf{minimal vertex separator} if $ S $ is a minimal $ (a,b) $-separator for some vertices $ a,b $. 
Note that a proper subset of a minimal vertex separator can be a minimal vertex separator for other vertices. 

Let $ p_{1}, \dots, p_{n} $ be paths from a vertex $ a $ to another vertex $ b $. 
The paths $ p_{1}, \dots, p_{n} $ are \textbf{internally disjoint} if they do not have any internal vertex in common. 
A path is called \textbf{induced} if it is an induced subgraph, that is, there are no edges connecting non-consecutive vertices in the path. 
\begin{theorem}[Menger {\cite{menger1927zur-fm}}]\label{Menger}
Let $ a,b $ be non-adjacent vertices of a connected graph. 
Then the minimum of the cardinalities of minimal $ (a,b) $-separators equals to the maximum number of internally disjoint induced paths from $ a $ to $ b $. 
\end{theorem}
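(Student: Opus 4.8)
The plan is to split the claimed equality into two inequalities and, as a preliminary step, to observe that the word \emph{induced} may be dropped. Given any family of internally disjoint $ a $-$ b $ paths, I would replace each path by a shortest path inside the subgraph induced by its own vertex set; this short-cuts every chord while using no vertex outside the original path, so the paths stay internally disjoint and each becomes induced. Hence the maximum number of internally disjoint \emph{induced} paths equals the maximum number of internally disjoint paths, and it suffices to prove the equality for ordinary internally disjoint paths. It is then convenient to reduce the vertex version for the pair $ a, b $ to the set version: put $ H = G \setminus \{a,b\} $, $ A = N_G(a) $ and $ B = N_G(b) $; since $ a, b $ never lie in an $ (a,b) $-separator, the minimal $ (a,b) $-separators of $ G $ correspond to the minimal $ A $-$ B $ separators of $ H $, and internally disjoint $ a $-$ b $ paths correspond to disjoint $ A $-$ B $ paths. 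So I will prove the set version: the minimum size of an $ A $-$ B $ separator equals the maximum number of disjoint $ A $-$ B $ paths.

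The easy inequality says there are never more disjoint paths than the size of a separator. If $ P_1, \dots, P_n $ are disjoint $ A $-$ B $ paths and $ S $ separates $ A $ from $ B $, then each $ P_i $ must meet $ S $, and since the $ P_i $ are disjoint these contact vertices are distinct, giving $ |S| \ge n $. Minimizing over $ S $ yields (minimum separator size) $ \ge $ (maximum number of disjoint paths).

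For the reverse inequality, which is the heart of the theorem, I would set $ k $ equal to the minimum separator size (equivalently the minimum cardinality of a minimal separator, since a smallest separator is automatically inclusion-minimal) and argue by induction on $ |E(G)| $ that $ k $ disjoint $ A $-$ B $ paths exist. If $ G $ has no edges, the minimum separator is $ A \cap B $ and the corresponding single-vertex paths do the job. Otherwise fix an edge $ e = \{x,y\} $ and pass to $ G \setminus e $. If every $ A $-$ B $ separator of $ G \setminus e $ still has at least $ k $ vertices, the induction hypothesis supplies $ k $ disjoint paths there, which survive in $ G $. If instead $ G \setminus e $ has a separator $ S $ with $ |S| \le k-1 $, then because $ S $ fails to separate $ A $ from $ B $ in $ G $ there is an $ A $-$ B $ path through $ e $, forcing $ x, y \notin S $ and placing $ x, y $ on opposite sides of $ S $; a short argument then shows that $ S \cup \{x\} $ and $ S \cup \{y\} $ are both minimum $ A $-$ B $ separators of $ G $, so $ |S| = k-1 $.

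The decisive and most delicate step is the recombination. Let $ G_x $ be the subgraph formed by the $ A $-side together with $ S \cup \{x\} $, and $ G_y $ the $ B $-side together with $ S \cup \{y\} $. Because $ y \notin V(G_x) $ and $ x \notin V(G_y) $, both are proper subgraphs with strictly fewer edges, so the induction hypothesis applies and yields $ k $ disjoint paths from $ A $ to $ S \cup \{x\} $ in $ G_x $ and $ k $ disjoint paths from $ S \cup \{y\} $ to $ B $ in $ G_y $, each family meeting its $ k $ separator vertices exactly once. I then splice these families, joining the two segments that end and start at each common vertex of $ S $ directly, and joining the segment ending at $ x $ to the segment starting at $ y $ through the edge $ e $; this produces $ k $ internally disjoint $ A $-$ B $ paths. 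The main obstacle is precisely this bookkeeping: one must verify that each path system hits every vertex of its separator exactly once so that the splicing is a perfect matching, and that the spliced paths remain internally disjoint. As an alternative to the induction I could instead invoke the max-flow--min-cut theorem on the vertex-split network (replace each internal vertex $ v $ by an arc $ v_{\mathrm{in}} \to v_{\mathrm{out}} $ of capacity one, give the original edges infinite capacity, and take source $ a $ and sink $ b $), under which integral maximum flows correspond to internally disjoint paths and minimum cuts to minimum $ (a,b) $-separators, delivering the equality at once.
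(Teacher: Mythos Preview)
The paper does not supply its own proof of this statement: Theorem~\ref{Menger} is quoted from Menger's 1927 paper and used as a black box (only Corollary~\ref{Menger cor} is argued, and that merely by combining two of the paths guaranteed by the theorem). So there is no ``paper's own proof'' to compare against.

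That said, your proposal is a correct and standard proof of Menger's theorem. The preliminary step---replacing each path in a family of internally disjoint $a$--$b$ paths by a shortest path inside the subgraph it spans---is a clean way to see that the qualifier \emph{induced} in the statement is harmless; shortest paths are chordless, and the replacement uses no new vertices, so internal disjointness persists. Your inductive argument on $|E(G)|$ is essentially the G\"oring--Diestel short proof: the deletion of an edge either preserves the minimum separator size (and induction finishes) or produces a smaller separator $S$, from which one reconstructs two minimum separators $S\cup\{x\}$ and $S\cup\{y\}$ and splices path systems on the two sides through the deleted edge. You correctly flag the one place where care is needed, namely checking that the two auxiliary graphs really have fewer edges and that the two path families match up bijectively on $S\cup\{x\}$ and $S\cup\{y\}$; both are routine once stated. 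The alternative reduction to max-flow--min-cut via vertex splitting is also perfectly valid and, if anything, the quickest route for a reader who already accepts that theorem.
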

In this paper, the following corollary is required: 
\begin{corollary}\label{Menger cor}
Let $ a,b $ be non-adjacent vertices of a connected graph $ G $ and $ S $ a minimal $ (a,b) $-separator of minimal cardinality. 
Let $ u,v $ be distinct vertices in $ S $. 
Then there exist a cycle of $ G $ such that it contains the vertices $ a,b,u,v $, it intersects $ S $ at $ \{u,v\} $, and it consists of two induced paths from $ a $ to $ b $. 
\end{corollary}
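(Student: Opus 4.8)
The plan is to invoke Menger's theorem (Theorem~\ref{Menger}) and then run a short pigeonhole argument. Set $k \coloneqq |S|$. Since $S$ is an $(a,b)$-separator we have $a,b \notin S$, and since $S$ is a minimal $(a,b)$-separator of minimal cardinality, $k$ is exactly the minimum of the cardinalities of minimal $(a,b)$-separators. Hence by Theorem~\ref{Menger} there exist $k$ internally disjoint induced paths $p_{1},\dots,p_{k}$ from $a$ to $b$.

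First I would show that these paths exhaust $S$, meeting it in exactly one vertex apiece. Each $p_{i}$ joins $a$ to $b$ while $S$ separates $a$ from $b$, so $p_{i}$ must contain a vertex of $S$; because $a,b\notin S$, that vertex is an internal vertex of $p_{i}$. As the $p_{i}$ are internally disjoint, the sets $V(p_{i})\cap S$ are pairwise disjoint and non-empty, and there are $k$ of them inside the $k$-element set $S$. Therefore each $V(p_{i})\cap S$ is a singleton, say $\{s_{i}\}$, and $\{s_{1},\dots,s_{k}\}=S$. In particular there are indices $i\neq j$ with $s_{i}=u$ and $s_{j}=v$ (the indices are distinct because $u\neq v$).

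Then I would take $C \coloneqq p_{i}\cup p_{j}$. Since $p_{i}$ and $p_{j}$ are internally disjoint paths sharing exactly the endpoints $a$ and $b$, each of length at least two (as $a,b$ are non-adjacent), their union is a cycle through the four distinct vertices $a,b,u,v$. It meets $S$ in precisely $\{u,v\}$, because $p_{i}$ meets $S$ only in $s_{i}=u$ and $p_{j}$ meets $S$ only in $s_{j}=v$; and by construction $C$ consists of the two induced paths $p_{i}$ and $p_{j}$ from $a$ to $b$. This is exactly the cycle claimed.

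The argument is short, and the only place that genuinely uses the hypotheses is the pigeonhole step: it relies on the \emph{equality} in Menger's theorem — equivalently, on the minimality of $|S|$ — to force each of the $k$ paths to cross $S$ in a single vertex. Without this, a path could cross $S$ several times and the intersection of $C$ with $S$ could be strictly larger than $\{u,v\}$, so the minimality of the separator is the real content being used.
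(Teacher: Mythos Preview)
Your proof is correct and follows essentially the same route as the paper: invoke Menger's theorem to obtain $|S|$ internally disjoint induced $a$--$b$ paths, match them bijectively to $S$, and take the union of the two paths through $u$ and $v$. The paper simply asserts that the paths can be indexed by $S$ with $p_{s}\cap S=\{s\}$, whereas you supply the (correct) pigeonhole argument that justifies this step.
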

\begin{proof}
By Theorem \ref{Menger}, there exists a set $ \Set{p_{s} | s \in S} $ of  internally disjoint induced paths from $ a $ to $ b $, where the path $ p_{s} $ intersects $ S $ at $ \{ s \} $ for every $ s \in S $. 
The cycle obtained by connecting $ p_{u} $ and $ p_{v} $ at their endvertices is a desired cycle. 
\end{proof}

\subsection{Chordal graphs and their clique-separator graph}\label{subsec:clique-separator graph}
A subset $ C \subseteq V_{G} $ is called a \textbf{clique} of $ G $ if $ G[C] $ is a complete graph. 
A vertex $ v \in V_{G} $ is called \textbf{simplicial} if the neighborhood of $ v $ is a clique. 
An ordering $ (v_{1}, \dots, v_{\ell}) $ of the vertices is called a \textbf{perfect elimination ordering} if $ v_{i} $ is simplicial in $ G[\{v_{1}, \dots, v_{i}\}] $ for every $ i \in \{1,\dots, \ell\} $. 
Recall that a chordal graph is a graph whose cycles of length at least four have chords. 
The following theorems are basic results for chordal graphs. 
\begin{theorem}[Dirac {\cite[Theorem 1]{dirac1961rigid-aadmsduh}}]\label{Dirac minmal vertex separator}
A simple graph is chordal if and only if every minimal vertex separator is a clique. 
\end{theorem}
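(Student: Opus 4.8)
The plan is to prove the two implications separately, unwinding the definitions of chordal graph and minimal vertex separator together with the connectivity bookkeeping set up in this section.

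For the direction ``chordal $\Rightarrow$ every minimal vertex separator is a clique'', I would start from an inclusion-minimal $(a,b)$-separator $S$ and let $G_{a}, G_{b}$ be the connected components of $G \setminus S$ containing $a$ and $b$ respectively. The first (routine) step is that inclusion-minimality of $S$ forces every vertex of $S$ to have a neighbour in $G_{a}$ and a neighbour in $G_{b}$: if some $s \in S$ had no neighbour in $G_{a}$, then $S \setminus \{s\}$ would still separate $a$ from $b$, contradicting minimality. Now fix distinct $u,v \in S$ and suppose, towards a contradiction, that $\{u,v\} \notin E_{G}$. Using the previous step and the connectedness of $G_{a}$, choose a \emph{shortest} path $P_{a}$ from $u$ to $v$ all of whose internal vertices lie in $G_{a}$; since $u,v \notin V_{G_{a}}$ and $\{u,v\} \notin E_{G}$, the path $P_{a}$ has length at least two. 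Choose $P_{b}$ analogously through $G_{b}$. Because the interiors of $P_{a}$ and $P_{b}$ lie in different components of $G \setminus S$, these paths are internally disjoint, so their union is a cycle $C$ of length at least four. Chordality of $G$ provides a chord of $C$; a short case analysis excludes every possibility except the edge $\{u,v\}$ itself, because a chord with both ends on $P_{a}$ (resp.\ $P_{b}$), or joining $u$ or $v$ to an interior vertex of $P_{a}$ or $P_{b}$, would contradict the minimality of $P_{a}$ or $P_{b}$, while a chord joining an interior vertex of $P_{a}$ to one of $P_{b}$ cannot exist. Hence $\{u,v\} \in E_{G}$, a contradiction, so $S$ is a clique.

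For the converse I would argue by contraposition: suppose $G$ has a chordless cycle $C = (v_{1}, \dots, v_{k})$ with $k \geq 4$, and exhibit a minimal vertex separator that is not a clique. Put $a \coloneqq v_{1}$ and $b \coloneqq v_{3}$; since $C$ is chordless these are non-adjacent, and they lie in a common connected component, so there is an inclusion-minimal $(a,b)$-separator $S$. The two arcs of $C$ joining $a$ and $b$ are internally disjoint $(a,b)$-paths, hence each must contain a vertex of $S$ in its interior; pick $u$ from the interior of one arc and $v$ from the interior of the other. Then $u$ and $v$ are distinct and non-consecutive on $C$ (they lie on different arcs and neither equals $a$ or $b$), hence non-adjacent because $C$ has no chord. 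So $S$ is a minimal vertex separator that is not a clique, which establishes the contrapositive.

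I expect the one genuinely load-bearing step to be the case analysis in the forward direction showing that $C = P_{a} \cup P_{b}$ has no chord other than $\{u,v\}$; choosing $P_{a}$ and $P_{b}$ to be \emph{shortest} admissible paths is precisely what rules out the unwanted chords, and everything else is unwinding definitions. One could alternatively produce the required cycle from Menger's theorem, but Corollary~\ref{Menger cor} as stated applies to cardinality-minimal separators, whereas here $S$ is only inclusion-minimal, so the direct shortest-path construction is the cleaner route.
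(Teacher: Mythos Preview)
The paper does not supply its own proof of this theorem; it simply cites Dirac's original paper and uses the statement as a black box. Your argument is the standard proof (essentially Dirac's): in the forward direction you build a cycle through two shortest $u$–$v$ paths inside the two components separated by $S$, and in the converse you extract a non-clique minimal separator from a chordless cycle. Both directions are correct as written, and the shortest-path choice is exactly what makes the chord case analysis go through. Your remark that Corollary~\ref{Menger cor} is not directly applicable here because $S$ is only inclusion-minimal is also accurate.
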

\begin{theorem}[Dirac {\cite[Theorem 4]{dirac1961rigid-aadmsduh}}]\label{Dirac simplicial}
Every chordal graph is complete or has at least two non-adjacent simplicial vertices. 
\end{theorem}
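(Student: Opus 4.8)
The final statement to prove is Corollary \ref{Menger cor}, which derives a specific cycle-existence result from Menger's theorem. Let me plan a proof of exactly this corollary.

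The plan is to invoke Theorem \ref{Menger} directly to produce a family of internally disjoint induced paths, and then assemble the desired cycle from two of them. First I would observe that since $S$ is a minimal $(a,b)$-separator of minimal cardinality, its size $|S|$ equals the minimum over all minimal $(a,b)$-separators, which by Theorem \ref{Menger} coincides with the maximum number of internally disjoint induced paths from $a$ to $b$. Hence there exists a collection of exactly $|S|$ internally disjoint induced paths from $a$ to $b$.

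The key step is to argue that these paths can be taken so that each one meets $S$ in exactly one vertex, and that the correspondence between paths and separator vertices is a bijection. Here I would reason as follows: every path from $a$ to $b$ must meet the separator $S$ (otherwise $a$ and $b$ would lie in the same component of $G \setminus S$). Since the paths are internally disjoint and $a,b \notin S$, each path meets $S$ only at internal vertices, and no two distinct paths can share a vertex of $S$. Thus the $|S|$ paths inject into $S$ by the vertex each one uses, and by cardinality this injection is a bijection. Indexing the paths as $\{p_s \mid s \in S\}$ so that $p_s$ meets $S$ precisely at $\{s\}$ is then legitimate. I should be slightly careful that each path meets $S$ in exactly \emph{one} vertex rather than several; minimality of $S$ guarantees no vertex of $S$ is redundant, and the pigeonhole/injection argument forces each path to use exactly one separator vertex, since $|S|$ paths must cover $|S|$ distinct separator vertices with at least one apiece.

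With the indexed paths in hand, the construction is immediate: given the two prescribed vertices $u,v \in S$, I take the paths $p_u$ and $p_v$ and concatenate them at their common endvertices $a$ and $b$ to form a closed walk. Because $p_u$ and $p_v$ are internally disjoint and each is an induced path, the result is a genuine cycle containing $a,b,u,v$; it meets $S$ exactly in $\{u,v\}$ (since $p_u$ meets $S$ only at $u$ and $p_v$ only at $v$); and it is visibly the union of the two induced paths from $a$ to $b$, as required.

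The main obstacle is the bookkeeping in the middle step — namely justifying that the paths supplied by Menger's theorem can be chosen to meet $S$ in exactly one vertex each, with distinct paths hitting distinct vertices of $S$. This is where minimality of the separator and internal disjointness must be combined carefully; everything before and after is formal. Once the indexed family $\{p_s\}$ is established, the proof concludes in a single sentence by gluing $p_u$ and $p_v$.
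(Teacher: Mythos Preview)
You have misidentified the statement. The theorem in question is Theorem~\ref{Dirac simplicial} (Dirac): \emph{every chordal graph is complete or has at least two non-adjacent simplicial vertices}. Your proposal instead addresses Corollary~\ref{Menger cor}, the cycle-existence consequence of Menger's theorem. These are entirely different results: Dirac's theorem is about the existence of simplicial vertices in chordal graphs and has nothing to do with Menger's theorem, separators of minimal cardinality, or assembling cycles from internally disjoint paths.

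For what it is worth, your argument for Corollary~\ref{Menger cor} is essentially the paper's own proof of that corollary, with the bookkeeping about the bijection between paths and separator vertices spelled out more carefully than the paper does. But none of it bears on Theorem~\ref{Dirac simplicial}. The paper does not supply a proof of Dirac's theorem at all; it is quoted as a classical result with a citation to \cite{dirac1961rigid-aadmsduh}. A proof would proceed by induction on the number of vertices, using Theorem~\ref{Dirac minmal vertex separator} (that minimal vertex separators of chordal graphs are cliques) to split the graph along such a separator and locate simplicial vertices in the pieces.
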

\begin{theorem}[Fulkerson-Gross {\cite[Section 7]{fulkerson1965incidence-pjom}}]\label{FG}
A simple graph is chordal if and only if it admits a perfect elimination ordering. 
\end{theorem}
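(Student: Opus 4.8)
The plan is to prove the two implications separately, using a direct extremal argument in one direction and induction on the number of vertices in the other.

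First I would show that a perfect elimination ordering forces chordality. Suppose $ (v_{1}, \dots, v_{\ell}) $ is a perfect elimination ordering and let $ C $ be any cycle of length at least four. Among the vertices of $ C $, let $ v_{k} $ be the one of largest index $ k $. The two neighbors of $ v_{k} $ along the cycle, say $ u $ and $ w $, then both have index smaller than $ k $, so they lie in $ G[\{v_{1}, \dots, v_{k}\}] $ and are both adjacent to $ v_{k} $ there. Since $ v_{k} $ is simplicial in $ G[\{v_{1}, \dots, v_{k}\}] $, its neighborhood in this induced subgraph is a clique, whence $ u $ and $ w $ are adjacent. Because $ C $ has length at least four, $ u $ and $ w $ are non-consecutive on $ C $, so the edge $ \{u,w\} $ is a chord. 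Hence $ G $ is chordal.

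For the converse, I would argue by induction on $ \ell = |V_{G}| $ that every chordal graph admits a perfect elimination ordering, the base case $ \ell = 1 $ being trivial. By Theorem \ref{Dirac simplicial}, a chordal graph is either complete---in which case every vertex is simplicial---or has at least two non-adjacent simplicial vertices; either way $ G $ possesses a simplicial vertex $ v $. The induced subgraph $ G \setminus \{v\} $ is again chordal, since any cycle of length at least four in $ G \setminus \{v\} $ is also a cycle in $ G $, and the chord guaranteed by chordality of $ G $ joins two of its vertices, all of which survive in $ G \setminus \{v\} $, so it remains a chord there. By the inductive hypothesis $ G \setminus \{v\} $ has a perfect elimination ordering $ (v_{1}, \dots, v_{\ell-1}) $, and I would claim that $ (v_{1}, \dots, v_{\ell-1}, v) $ is a perfect elimination ordering of $ G $. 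Indeed, for $ i < \ell $ the subgraph $ G[\{v_{1}, \dots, v_{i}\}] $ does not involve $ v $ and coincides with the corresponding subgraph of $ G \setminus \{v\} $, so $ v_{i} $ remains simplicial there; and $ v $ is simplicial in $ G[\{v_{1}, \dots, v_{\ell-1}, v\}] = G $ by its choice.

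The two directions together yield the equivalence. The only points requiring care are the routine verifications that the two cycle-neighbors in the forward direction are genuinely non-consecutive and thus produce a chord, and that deleting a vertex preserves chordality in the backward direction; neither presents a real obstacle. The essential input is the existence of a simplicial vertex, which is supplied by Theorem \ref{Dirac simplicial}.
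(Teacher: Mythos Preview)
Your proof is correct and follows the standard argument for this classical result. Note, however, that the paper does not actually prove Theorem~\ref{FG}; it is merely stated with a citation to Fulkerson--Gross, so there is no proof in the paper to compare against. Your argument is the textbook one: the forward direction via the last-indexed vertex on a cycle, and the converse via Dirac's Theorem~\ref{Dirac simplicial} and induction.
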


Ibarra \cite{ibarra2009clique-separator-dam} has invented the clique-separator graph of a chordal graph to describe its structure. 
\begin{definition}
Let $ G $ be a chordal graph. 
The clique-separator graph $ \mathcal{G} $ of $ G $ consists of the following nodes, (directed) arcs, and (undirected) edges. 
\begin{itemize}
\item $ \mathcal{G} $ has a clique node $ C $ for each maximal clique $ C $ of $ G $. 
\item $ \mathcal{G} $ has a separator node $ S $ for each minimal vertex separator $ S $ of $ G $. 
\item An arc of $ \mathcal{G} $ is from a separator node to another separator node.  
The tuple $ (S,S^{\prime}) $ of separator nodes is an arc of $ \mathcal{G} $ if $ S \subsetneq S^{\prime} $ and there exists no separator node $ S^{\prime\prime} $ such that $ S \subsetneq S^{\prime\prime} \subsetneq S^{\prime} $. 
\item An edge of $ \mathcal{G} $ is between a clique node and a separator node. 
For a clique node $ C $ and a separator node $ S $, the set $ \{C, S\} $ is an edge of $ \mathcal{G} $ if $ S \subsetneq C $ and there exists no separator node $ S^{\prime} $ such that $ S \subsetneq S^{\prime} \subsetneq C $. 
\end{itemize}
\end{definition}
In the rest of this subsection, $ G, \mathcal{G} $ denote a chordal graph and its clique-separator graph, respectively. 
Note that we use terminologies ``vertex" for $ G $ and ``node" for $ \mathcal{G} $ after Ibarra. 
Figure \ref{Fig:clique-separator graph} shows an example of a chordal graph and its clique-separator graph, where $ C_{X}, S_{X} $ denote the maximal clique and the minimal vertex separator on a set $ X $ of labels of vertices, respectively. 
The clique-separator graph has many remarkable properties which describe the structure of a chordal graph. 
We will now introduce some of them required in this paper. 

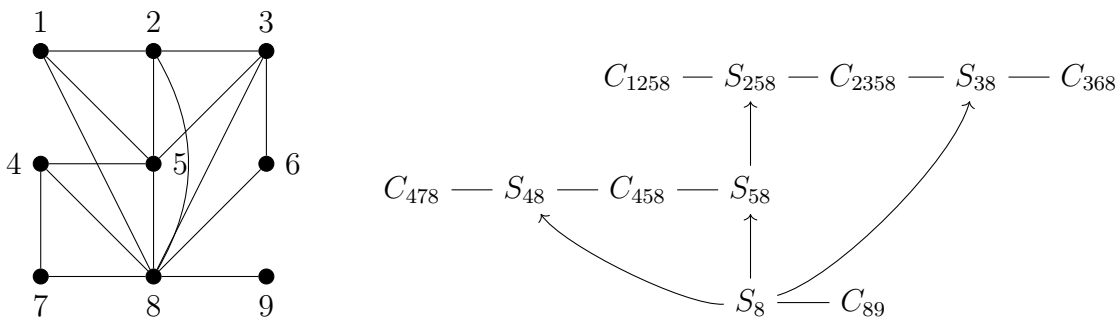
\begin{figure}[t]
\centering
\begin{tikzpicture}
\draw (  0,  3) node[v,label=above:{$ 1 $}](1){};
\draw (1.5,  3) node[v,label=above:{$ 2 $}](2){};
\draw (  3,  3) node[v,label=above:{$ 3 $}](3){};
\draw (  0,1.5) node[v,label=left:{$ 4 $}](4){};
\draw (1.5,1.5) node[v,label=right:{$ 5 $}](5){};
\draw (  3,1.5) node[v,label=right:{$ 6 $}](6){};
\draw (  0,  0) node[v,label=below:{$ 7 $}](7){};
\draw (1.5,  0) node[v,label=below:{$ 8 $}](8){};
\draw (  3,  0) node[v,label=below:{$ 9 $}](9){};
\draw (1)--(2);
\draw (1)--(5);
\draw (1)--(8);
\draw (2)--(3);
\draw (2)--(5);
\draw[bend left] (2) to (8);
\draw (3)--(5);
\draw (3)--(6);
\draw (3)--(8);
\draw (4)--(5);
\draw (4)--(7);
\draw (4)--(8);
\draw (5)--(8);
\draw (6)--(8);
\draw (7)--(8);
\draw (8)--(9);
\end{tikzpicture}\qquad
\begin{tikzpicture}
\draw (-1.5,3) node[](C1258){$ C_{1258} $};
\draw (0,3) node[](S258){$ S_{258} $};
\draw (1.5,3) node[](C2358){$ C_{2358} $};
\draw (3,3) node[](S38){$ S_{38} $};
\draw (4.5,3) node[](C368){$ C_{368} $};
\draw (-4.5,1.5) node[](C478){$ C_{478} $};
\draw (-3,1.5) node[](S48){$ S_{48} $};
\draw (-1.5,1.5) node[](C458){$ C_{458} $};
\draw (0,1.5) node[](S58){$ S_{58} $};
\draw (0,0) node[](S8){$ S_{8} $};
\draw (1.5,0) node[](C89){$ C_{89} $};
\draw (C1258)--(S258)--(C2358)--(S38)--(C368);
\draw (C478)--(S48)--(C458)--(S58);
\draw (S8)--(C89);
\draw[->, bend left, distance=5mm] (S8) to (S48);
\draw[->] (S8)--(S58);
\draw[->, bend right,distance=8mm] (S8) to (S38);
\draw[->] (S58)--(S258);
\end{tikzpicture}
\caption{A chordal graph and its clique-separator graph}\label{Fig:clique-separator graph}
\end{figure}

\begin{proposition}[Ibarra {\cite[p.1739 Observation]{ibarra2009clique-separator-dam}}]\label{Ibarra observation}
For every separator node $ S $ of $ \mathcal{G} $, one of the following holds. 
\begin{enumerate}[(1)]
\item There exist distinct clique nodes $ C, C^{\prime} $ such that $ \{S,C\} $ and $ \{S,C^{\prime}\} $ are edges of $ \mathcal{G} $. 
\item There exist distinct separator nodes $ S^{\prime},S^{\prime\prime} $ such that $ (S,S^{\prime}), (S,S^{\prime\prime}) $ are arcs of $ \mathcal{G} $. 
\item There exist a clique node $ C $ and a separator node $ S^{\prime} $ such that $ \{S,C\} $ is an edge of $ \mathcal{G} $ and $ (S,S^{\prime}) $ is an arc of $ \mathcal{G} $. 
\end{enumerate}
\end{proposition}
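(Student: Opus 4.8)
The plan is to prove the equivalent statement that every separator node $S$ has at least two \emph{outgoing connections} in $\mathcal{G}$, where an outgoing connection means either an edge $\{S,C\}$ to a clique node or an arc $(S,S')$ to a larger separator node; the three alternatives (1)--(3) are exactly the three ways two outgoing connections can be distributed among clique nodes and separator nodes. I would begin by recording two standing facts. First, by Theorem~\ref{Dirac minmal vertex separator} every minimal vertex separator of the chordal graph $G$ is a clique; in particular $S$ and any separator node containing it are cliques. Second, writing $S$ as a minimal $(a,b)$-separator, the graph $G\setminus S$ has at least two \emph{full} components $D_{1},D_{2}$, meaning every vertex of $S$ has a neighbour in each $D_{i}$ --- for if some $s\in S$ had no neighbour in the component of $a$, then $S\setminus\{s\}$ would still separate $a$ from $b$, contradicting minimality. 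Now observe that whenever $X$ is a maximal clique or a minimal vertex separator with $S\subsetneq X$, the set $X\setminus S$ is a non-empty clique, hence connected, hence contained in a single component of $G\setminus S$. This lets me assign to each edge $\{S,C\}$ the component of $G\setminus S$ containing $C\setminus S$, and to each arc $(S,S')$ the component containing $S'\setminus S$, and the goal becomes: each of $D_{1},D_{2}$ is assigned at least one outgoing connection of $S$.

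The engine of the argument is the following lemma: \emph{if $D$ is a full component of $G\setminus S$, then some vertex of $D$ is adjacent to every vertex of $S$}. Granting it, fix $i\in\{1,2\}$, pick $d\in D_{i}$ adjacent to all of $S$, so that $S\cup\{d\}$ is a clique, and extend it to a maximal clique $C$ of $G$; then $S\subsetneq C$ and, since $d\in C\setminus S$, we get $C\setminus S\subseteq D_{i}$. If no minimal vertex separator lies strictly between $S$ and $C$, then $\{S,C\}$ is an edge of $\mathcal{G}$, assigned to $D_{i}$. Otherwise, choose a minimal vertex separator $S'$ with $S\subsetneq S'\subsetneq C$ that is inclusion-minimal among all such; then no minimal vertex separator lies strictly between $S$ and $S'$ (such an $S''$ would also lie strictly between $S$ and $C$, contradicting the choice of $S'$), so $(S,S')$ is an arc of $\mathcal{G}$, and $S'\setminus S\subseteq C\setminus S\subseteq D_{i}$, so this arc is assigned to $D_{i}$. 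Thus both $D_{1}$ and $D_{2}$ receive an outgoing connection; since $D_{1}\cap D_{2}=\varnothing$ and the ``new parts'' $X\setminus S$ are non-empty, these two connections land on distinct nodes. Reading off the possibilities (two edges, two arcs, or one of each) gives alternative (1), (2), or (3) respectively, completing the proof.

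What remains, and where I expect the only genuine difficulty, is the lemma; note that it really uses chordality, since for an arbitrary clique $S$ and connected set $D$ with every vertex of $S$ having a neighbour in $D$ the conclusion can fail (e.g. a $4$-cycle with a single chord). I would prove it by induction on $|D|$. If $|D|=1$ the lone vertex of $D$ is adjacent to all of $S$ by fullness. If $|D|\geq 2$, consider the chordal graph $G[S\cup D]$: if it is complete, any vertex of $D$ works; otherwise, by Theorem~\ref{Dirac simplicial} it has two non-adjacent simplicial vertices, which cannot both lie in the clique $S$, so one of them, $v$, lies in $D$, and since $D$ is a connected component of $G\setminus S$ we have $N_{G}(v)=N_{G[S\cup D]}(v)$, so $v$ is simplicial in $G$ too. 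If $v$ is adjacent to all of $S$ we are done; otherwise delete $v$. The graph $G\setminus v$ is chordal, $G[D]\setminus v$ is connected since a simplicial vertex is never a cut vertex, and $D\setminus v$ is still a full component of $(G\setminus v)\setminus S$: indeed, given $s\in S$, if $s\not\sim v$ then $s$ retains all its neighbours in $D$, while if $s\sim v$ then --- $N_{G}(v)$ being a clique --- $s$ is adjacent to every vertex of the non-empty set $N_{G}(v)\cap D$, at least one of which remains in $D\setminus v$. Hence $S$ is still a minimal vertex separator of $G\setminus v$ with strictly smaller full component $D\setminus v$, and the induction hypothesis supplies the desired vertex.
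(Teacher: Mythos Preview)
Your proof is correct. Note that the paper does not actually prove this proposition: it is quoted from Ibarra's paper as an ``Observation'' and stated without argument, so there is no paper-side proof to compare against. What you have written is therefore a genuine, self-contained verification of the cited fact.

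A couple of remarks on the write-up. First, in the inductive step of your lemma you conclude with ``Hence $S$ is still a minimal vertex separator of $G\setminus v$''. This is slightly stronger than what you need and depends on the presence of a second full component, which your lemma as stated does not assume; it is cleaner to phrase the lemma for a chordal graph $G$, a clique $S$, and a full component $D$ of $G\setminus S$, and then simply verify that $(G\setminus v,S,D\setminus\{v\})$ again satisfies these hypotheses --- which is exactly what you do check. Second, the distinctness of the two outgoing connections is argued correctly but tersely: it may help the reader to spell out that the two targets $X_{1},X_{2}$ satisfy $\varnothing\neq X_{1}\setminus S\subseteq D_{1}$ and $\varnothing\neq X_{2}\setminus S\subseteq D_{2}$, whence $X_{1}\neq X_{2}$. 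With these cosmetic adjustments the argument is clean.
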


A \textbf{box} of the clique-separator graph $ \mathcal{G} $ is a connected components of the undirected graph obtained by deleting the arcs of $ \mathcal{G} $. 
Let $ \mathcal{G}^{c} $ denote the directed graph obtained by contracting each box into a single node and replacing multiple arcs by a single arc. 

\begin{theorem}[Ibarra {\cite[Theorem 3(2)]{ibarra2009clique-separator-dam}}]\label{Ibarra2}
The following hold: 
\begin{enumerate}[(1)]
\item \label{Ibarra2-1} Every box is a tree with the \textbf{clique intersection property}: for any distinct nodes $ N,N^{\prime} $ of a box and any internal node $ N^{\prime\prime} $ between $ N $ and $ N^{\prime} $, we have $ N \cap N^{\prime} \subseteq N^{\prime\prime} $. 
\item \label{Ibarra2-2} The separator nodes in each box form an antichain, that is, there are no inclusions among the separator nodes. 
\item \label{Ibarra2-3} $ \mathcal{G}^{c} $ is a directed acyclic graph. 
\end{enumerate}
\end{theorem}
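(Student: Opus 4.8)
The plan is to read the whole statement off the \emph{poset} $P=(\mathcal{S}\cup\mathcal{C},\subseteq)$, where $\mathcal{S}$ is the set of minimal vertex separators and $\mathcal{C}$ the set of maximal cliques of $G$. By Theorem \ref{Dirac minmal vertex separator} every separator is a clique, and since a minimal separator is a proper subset of a maximal clique on each of its two sides, no separator equals a maximal clique; hence the maximal cliques are exactly the maximal elements of $P$, and $\mathcal{G}$ is precisely the Hasse diagram of $P$, the arcs being the covers inside $\mathcal{S}$ and the edges being the covers of the form ``separator $\lessdot$ clique.'' A box is then a connected component of this Hasse diagram after deleting the separator--separator covers. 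First I would fix a clique tree (junction tree) $T$ of $G$ and invoke the classical running-intersection property: for cliques $C,C'$ and any clique on the $T$-path between them, $C\cap C'$ lies in that clique, hence in every separator labelling an edge of that path.

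The technical heart is a local lemma: if $C-S-C'$ is a two-edge path of a box (so $S$ is a maximal separator inside both $C$ and $C'$), then $C\cap C'=S$, and moreover $S$ labels the first edge of the $T$-path leaving $C$ toward $C'$. Indeed running intersection gives $C\cap C'\subseteq\sigma_{1}\subsetneq C$ for the first separator $\sigma_{1}$ on that path, while $S\subseteq C\cap C'$; since the cover $S\lessdot C$ forbids any separator strictly between $S$ and $C$, we get $\sigma_{1}=S$ and then $C\cap C'=S$. In particular two separators covered by a common clique are incomparable. The payoff is a \emph{lifting lemma}: along any clique path $C_{1}-S_{1}-C_{2}-\cdots-C_{p}$ of a box, the arrival separator and departure separator at each internal $C_{j}$ are $S_{j-1}$ and $S_{j}$, which are distinct and hence label distinct $T$-edges at $C_{j}$; so the concatenation of the $T$-paths $C_{j}\rightsquigarrow C_{j+1}$ never backtracks and is therefore a \emph{simple} path of $T$, placing $C_{1},\dots,C_{p}$ on one $T$-path in this order.

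From this lifting, (1) and (2) fall out. A box has no cycle, since a closed box-walk would lift to a closed non-backtracking walk in the tree $T$, which is impossible; being connected and acyclic, each box is a tree. The clique-intersection property is then immediate: the box-path between two nodes lifts to a $T$-path, and running intersection on $T$ forces the endpoint-intersection into every clique and every separator along it, which are exactly the internal box-nodes (a separator endpoint reduces to this, as it sits inside its neighbouring clique). For (2), if $S\subsetneq R$ were two separators of one box, I would take the box-path $S-C_{1}-\cdots-C_{m}-R$; lifting $C_{1},\dots,C_{m}$ and applying running intersection yields $S\subseteq C_{1}\cap C_{m}\subseteq S_{1}$, where $S_{1}$ is the next box-separator after $C_{1}$, while $S$ and $S_{1}$ are both maximal in $C_{1}$ and so incomparable; thus $S=S_{1}$, contradicting simplicity of the path (the case $m=1$ is the common-clique case above).

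For (3) I would argue by contradiction from a shortest directed cycle $B_{1}\to\cdots\to B_{n}\to B_{1}$ of $\mathcal{G}^{c}$, realised by covers $S_{i}\subsetneq S_{i}'$ with $S_{i}\in B_{i}$ and $S_{i}'\in B_{i+1}$; note that (2) already rules out loops, since $n=1$ would require $S\subsetneq S'$ inside one box. The intended route is to transport this cycle into $T$, using the lifts of each box $B_{i+1}$ to connect the entry separator $S_{i}'$ to the next exit separator $S_{i+1}$, and to show that the nestings $S_{i}\subsetneq S_{i}'$ force the occupied subtrees of consecutive boxes to be strictly nested -- or else produce a comparable pair of separators within a single box, contradicting (2). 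I expect \textbf{this last step to be the main obstacle}: a naive weight argument fails because, as Figure \ref{Fig:clique-separator graph} shows, the separators of one box can have different cardinalities, so moving inside a box may both raise and lower $|S|$; establishing the global acyclicity therefore seems to demand the finer clique-tree bookkeeping of how each containment $S_{i}\subsetneq S_{i}'$ embeds into $T$, rather than any simple monotone potential on the boxes.
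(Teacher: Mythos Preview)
The paper does not give a proof of this theorem: it is quoted verbatim from Ibarra's paper \cite{ibarra2009clique-separator-dam} and used as a black box, so there is no ``paper's own proof'' to compare against.

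On the merits of your proposal: your arguments for (\ref{Ibarra2-1}) and (\ref{Ibarra2-2}) via a fixed clique tree $T$ are correct. The local lemma is sound because the first edge-separator $\sigma_{1}$ on the $T$-path from $C$ is itself a minimal vertex separator of $G$ (a standard fact about clique trees of chordal graphs), so the cover $S\lessdot C$ in the poset $P$ legitimately forces $\sigma_{1}=S$. The lifting lemma then follows, since distinct separator labels at $C_{j}$ force distinct $T$-edges, and in a tree a non-backtracking walk is simple. Parts (\ref{Ibarra2-1}) and (\ref{Ibarra2-2}) drop out as you describe.

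Part (\ref{Ibarra2-3}), however, is left genuinely incomplete, and you say so yourself. Your diagnosis is right that no monotone potential on the separators of a box will work, but the sketch you give---transport the directed cycle into $T$ and look for a contradiction---does not yet contain the mechanism that produces the contradiction. One line that does close the gap: observe that for each box $B$, the cliques adjacent to its separators in $\mathcal{G}$ span a subtree $T_{B}\subseteq T$, and that by the local lemma every $T$-edge of $T_{B}$ is labelled by a separator of $B$; since a given minimal separator belongs to exactly one box, the subtrees $T_{B}$ are pairwise edge-disjoint. Now an arc $B\to B'$ (realised by $S\subsetneq S'$) forces $S$ to lie in every clique containing $S'$, and in particular $S$ lies in the clique on the side of $T_{B}$ facing $T_{B'}$; following a directed cycle $B_{1}\to\cdots\to B_{n}\to B_{1}$ then forces a closed walk in $T$ that traverses at least one edge of each $T_{B_{i}}$, and edge-disjointness makes this walk non-backtracking, contradicting that $T$ is a tree. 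Something of this shape is what your ``finer clique-tree bookkeeping'' would have to become; as written, the proposal does not yet supply it.
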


Every directed acyclic finite graph has a sink, namely a vertex with no outgoing arcs from it. 
By Theorem \ref{Ibarra2}(\ref{Ibarra2-3}), the directed graph $ \mathcal{G}^{c} $ has a sink, which we call a \textbf{sink box} of $ \mathcal{G} $. 

\begin{proposition}\label{sink_box_leaf}
Let $ B $ be a sink box of $ \mathcal{G} $. 
Then $ B $ is a tree whose leaves are clique nodes. 
\end{proposition}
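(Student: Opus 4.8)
The plan is to argue by contradiction, using Proposition \ref{Ibarra observation} together with the structural facts collected in Theorem \ref{Ibarra2}. First note that $ B $ is automatically a tree by Theorem \ref{Ibarra2}(\ref{Ibarra2-1}), so the only thing that needs proof is the assertion about its leaves. Suppose, towards a contradiction, that some leaf of $ B $ is a separator node $ S $ (if $ B $ consists of a single node, regard that node as its unique leaf). Since the edges of $ \mathcal{G} $ join clique nodes to separator nodes and no edge is deleted when passing to boxes, every edge of $ \mathcal{G} $ incident to $ S $ lies inside $ B $; hence $ S $ is incident to at most one edge of $ \mathcal{G} $.

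Next I would apply Proposition \ref{Ibarra observation} to $ S $ and rule out each of the three possibilities. In case (1) there are two distinct edges $ \{S,C\} $ and $ \{S,C^{\prime}\} $, both lying in $ B $, so $ S $ has degree at least two in $ B $, contradicting that $ S $ is a leaf. In cases (2) and (3) there is an arc $ (S,S^{\prime}) $ of $ \mathcal{G} $, so in particular $ S \subsetneq S^{\prime} $. By Theorem \ref{Ibarra2}(\ref{Ibarra2-2}) the separator nodes lying in one box form an antichain, so $ S^{\prime} $ must belong to a box $ B^{\prime} \neq B $. Therefore the arc $ (S,S^{\prime}) $ descends to a genuine outgoing arc $ (B,B^{\prime}) $ of $ \mathcal{G}^{c} $, contradicting the assumption that $ B $ is a sink box. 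Since all three cases are impossible, no leaf of $ B $ can be a separator node, which is exactly the claim.

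The only point that needs genuine care, rather than being routine bookkeeping, is the observation that an arc of $ \mathcal{G} $ can never have both endpoints in the same box, so that an arc leaving $ S $ really does produce an outgoing arc of $ B $ in $ \mathcal{G}^{c} $; this is precisely where Theorem \ref{Ibarra2}(\ref{Ibarra2-2}) is used, together with the fact that an arc encodes a strict inclusion of separator sets. Once this is established, the contradictions in cases (2) and (3) are immediate, and Proposition \ref{sink_box_leaf} follows.
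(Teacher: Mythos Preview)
Your proof is correct and follows essentially the same approach as the paper's: both use Theorem \ref{Ibarra2}(\ref{Ibarra2-1}) to get that $B$ is a tree, then combine the sink hypothesis with Proposition \ref{Ibarra observation} to show that a separator node in $B$ must be incident to at least two edges and hence cannot be a leaf. The paper phrases this directly (``there are no outgoing arcs from $S$, hence case (1) of Proposition \ref{Ibarra observation} must hold''), while you argue by contradiction and make explicit the role of the antichain property (Theorem \ref{Ibarra2}(\ref{Ibarra2-2})) in ensuring that an arc out of $S$ really lands in a different box; the paper leaves this step implicit.
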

\begin{proof}
From Theorem \ref{Ibarra2}(\ref{Ibarra2-1}), the box $ B $ is a tree. 
Let $ S $ be a separator node of $ B $. 
Since $ B $ is a sink, there are no outgoing arcs from $ S $. 
Hence, by Proposition \ref{Ibarra observation}, there exist at least two clique nodes adjacent to $ S $. 
Therefore $ S $ cannot be a leaf of $ B $. 
Thus every leaf of $ B $ is a clique node. 
\end{proof} 
For every separator node $ S $ of $ \mathcal{G} $, define 
\begin{align*}
\Preds(S) \coloneqq \Set{S^{\prime} | \text{$ S^{\prime} $ is a separator node of $ \mathcal{G} $ such that $ S^{\prime} \subseteq S $}}. 
\end{align*}
For a subgraph $ \mathcal{F} $ of $ \mathcal{G} $, let $ G[\mathcal{F}] $ denote the subgraph of $ G $ induced by 
\begin{align*}
\Set{v \in V_{G} | \text{$ v $ belongs to some node of $ \mathcal{F} $}}. 
\end{align*} 

\begin{theorem}[Ibarra {\cite[Theorem 3(1)]{ibarra2009clique-separator-dam}}]\label{Ibarra 1}
Let $ S $ be a separator node of $ \mathcal{G} $. 
Suppose that $ \mathcal{G}_{1}, \dots, \mathcal{G}_{k} $ are the connected components of $ \mathcal{G} \setminus \Preds(S) $. 
Then the subgraphs $ G[\mathcal{G}_{1}] \setminus S, \dots, G[\mathcal{G}_{k}] \setminus S $ are the connected components of $ G \setminus S $. 
\end{theorem}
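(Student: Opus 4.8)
\section*{Proof proposal for Theorem~\ref{Ibarra 1}}

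The plan is to verify directly that $G[\mathcal{G}_{1}]\setminus S,\dots,G[\mathcal{G}_{k}]\setminus S$ enjoy the three defining features of ``the list of connected components of $G\setminus S$'': (i) each is nonempty and connected; (ii) their vertex sets partition $V_{G}\setminus S$; (iii) no edge of $G$ joins two of them. Granting (i)--(iii), each $G[\mathcal{G}_{i}]\setminus S$ is then a union of connected components of $G\setminus S$, is itself connected, and together they exhaust $V_{G}\setminus S$, so they are exactly the connected components.

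First I would record some elementary facts. By Theorem~\ref{Dirac minmal vertex separator} every minimal vertex separator of $G$ is a clique, so every node of $\mathcal{G}$ induces a complete subgraph of $G$. Every node $N$ that is not in $\Preds(S)$ satisfies $N\setminus S\neq\varnothing$: for a separator node this is the definition of $\Preds(S)$, and a clique node is a maximal clique $C$, which is never a minimal vertex separator (a standard fact about chordal graphs), hence never in $\Preds(S)$, and also $C\setminus S\neq\varnothing$ since $C\subseteq S$ would force $C=S$ by maximality (using that $S$ is a clique), contradicting that fact. Moreover, if $N,N'$ are adjacent in $\mathcal{G}$ then one is contained in the other and the smaller is a separator node: edges go between a clique node and a separator node with the separator smaller, and arcs go between separator nodes with the first smaller; so $N\cap N'$ equals that separator node, and if in addition $N,N'\notin\Preds(S)$ then $N\cap N'\not\subseteq S$. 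Property (i) now follows: along a path in the connected graph $\mathcal{G}_{i}$ consecutive nodes share a vertex outside $S$, and each node induces a clique in $G$, so the induced subgraph on $\bigcup_{N\in\mathcal{G}_{i}}(N\setminus S)=V(G[\mathcal{G}_{i}])\setminus S$ is connected, and it is nonempty because every node of $\mathcal{G}_{i}$ lies outside $\Preds(S)$. The covering half of (ii) is immediate: any $v\in V_{G}\setminus S$ lies in a maximal clique $C$, which is a clique node, hence not in $\Preds(S)$, hence in some $\mathcal{G}_{i}$, so $v\in V(G[\mathcal{G}_{i}])\setminus S$.

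The remaining content---disjointness of the sets $V(G[\mathcal{G}_{i}])\setminus S$ and property (iii)---I would deduce from the following key separation property: \emph{for every vertex $v\notin S$, all nodes of $\mathcal{G}$ containing $v$ lie in a single connected component of $\mathcal{G}\setminus\Preds(S)$.} Indeed, a vertex $v\notin S$ lying in a node of $\mathcal{G}_{i}$ and in a node of $\mathcal{G}_{j}$ then forces $i=j$; and an edge $vw$ of $G\setminus S$ lies in some maximal clique $C$, which is a clique node in some $\mathcal{G}_{m}$, and since $C$ contains $v$ we get $i=m$ while since it contains $w$ we get $j=m$, so $i=j$. This separation property is the step I expect to be the main obstacle.

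To establish it, note first that no node containing $v$ can belong to $\Preds(S)$, because such a node is a subset of $S$ whereas $v\notin S$; hence it suffices to show that the nodes of $\mathcal{G}$ containing $v$ induce a connected subgraph of $\mathcal{G}$ itself. I would reduce this to clique nodes by a climbing argument: a separator node $S'$ with $v\in S'$ is contained in a maximal clique $C$, and a maximal chain of separator nodes $S'=T_{0}\subsetneq T_{1}\subsetneq\dots\subsetneq T_{p}\subsetneq C$ (each $T_{q}\supseteq S'$ still containing $v$) yields the path $T_{0},T_{1},\dots,T_{p},C$ in $\mathcal{G}$ through nodes all containing $v$. Then, given two maximal cliques $C,C'$ containing $v$, a clique tree of the chordal graph $G$ supplies a path of maximal cliques $C=D_{0},D_{1},\dots,D_{m}=C'$, all containing $v$, whose consecutive intersections $D_{r}\cap D_{r+1}$ are minimal vertex separators containing $v$; connecting each $D_{r}\cap D_{r+1}$ to both $D_{r}$ and $D_{r+1}$ by the climbing argument splices these into a single path in $\mathcal{G}$ through nodes all containing $v$. (Alternatively, this connectivity can be extracted from the box decomposition of Theorem~\ref{Ibarra2} together with the clique intersection property of boxes.) Combining (i), (ii) and (iii) then gives that $G[\mathcal{G}_{1}]\setminus S,\dots,G[\mathcal{G}_{k}]\setminus S$ are precisely the connected components of $G\setminus S$.
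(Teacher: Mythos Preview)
The paper does not supply its own proof of Theorem~\ref{Ibarra 1}; the result is simply quoted from Ibarra \cite{ibarra2009clique-separator-dam} and used as a black box. There is therefore no ``paper's proof'' to compare against, and your proposal stands as an independent verification of the cited statement.

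Your argument is correct. The reduction to properties (i)--(iii) is the right framework, and the main content---your ``key separation property'' that all nodes of $\mathcal{G}$ containing a fixed vertex $v\notin S$ lie in one component of $\mathcal{G}\setminus\Preds(S)$---is handled cleanly. The climbing argument is valid: a maximal chain of separator nodes between a given separator node $S'$ and a clique node $C\supseteq S'$ does produce a path in $\mathcal{G}$ (each consecutive pair has no separator node strictly in between, precisely by maximality of the chain, so the arcs and the terminal edge are present), and every node on this path contains $S'\ni v$. Combining this with the running-intersection property of a clique tree to pass between any two maximal cliques containing $v$ completes the connectivity claim. The two external facts you invoke---that a maximal clique of a chordal graph is never a minimal vertex separator, and that consecutive intersections along a clique-tree path are minimal vertex separators---are standard in the theory of chordal graphs, so there is no gap. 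One cosmetic remark: the clause ``hence never in $\Preds(S)$'' is superfluous, since $\Preds(S)$ consists by definition only of separator nodes; the substantive use of the ``standard fact'' is in the next sentence, to rule out $C=S$.
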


\begin{corollary}\label{Ibarra 1 cor}
Let $ P $ be a path of a sink box of $ \mathcal{G} $ from a clique node to another clique node indicated as follows: 
\begin{center}
\begin{tikzpicture}
\draw (0,0) node[](C1){$ C_{1} $}; 
\draw (1,0) node[](S2){$ S_{2} $};
\draw (2,0) node[](C2){$ C_{2} $};
\draw (4.6,0) node[](Sk){$ S_{k} $};
\draw (5.6,0) node[](Ck){$ C_{k} $};
\draw (C1)--(S2)--(C2)--(2.8,0);
\draw[dotted] (2.8,0)--(3.8,0);
\draw (3.8,0)--(Sk)--(Ck); 
\end{tikzpicture}
\end{center}
Take vertices $ a \in C_{1}\setminus S_{2} $ and $ b \in C_{k} \setminus S_{k} $. 
Then the set of $ (a,b) $-minimal separators of $ G $ coincides with $ \{S_{2}, \dots, S_{k}\} $. 
\end{corollary}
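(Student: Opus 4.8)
The plan is to establish separately the two inclusions between $\{S_{2},\dots,S_{k}\}$ and the set of minimal $(a,b)$-separators of $G$, using Theorem~\ref{Ibarra 1} as the bridge between vertex separators of $G$ and the combinatorics of $\mathcal{G}$. Two structural facts drive the argument. First, since $B$ is a sink box, every arc of $\mathcal{G}$ having an endpoint in $B$ is directed \emph{into} $B$ and runs from a strictly smaller separator node: otherwise $\mathcal{G}^{c}$ would have an arc leaving the node $[B]$, or the antichain property of Theorem~\ref{Ibarra2}(\ref{Ibarra2-2}) would fail. In particular no separator node of $\mathcal{G}$ strictly contains any of $S_{2},\dots,S_{k}$, so these are maximal among the minimal vertex separators of $G$. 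Second, the separator nodes lying on $P$ are precisely $S_{2},\dots,S_{k}$, and they are pairwise incomparable by Theorem~\ref{Ibarra2}(\ref{Ibarra2-2}); hence for each $i$ the subpath of $P$ from $C_{1}$ to $C_{i-1}$, as well as the subpath from $C_{i}$ to $C_{k}$, avoids $\Preds(S_{i})$, so by Theorem~\ref{Ibarra 1} the vertices of $C_{1}\setminus S_{i}$ and of $C_{i-1}\setminus S_{i}$ lie in one connected component of $G\setminus S_{i}$, and likewise those of $C_{i}\setminus S_{i}$ and $C_{k}\setminus S_{i}$. Finally, the edges of $P$ give $S_{i}\subsetneq C_{i-1}$ and $S_{i}\subsetneq C_{i}$, and combining these with the clique intersection property (Theorem~\ref{Ibarra2}(\ref{Ibarra2-1})) along $P$ yields $a,b\notin S_{i}$ for every $i$.

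For the inclusion $\{\text{minimal }(a,b)\text{-separators}\}\subseteq\{S_{2},\dots,S_{k}\}$, let $S$ be a minimal $(a,b)$-separator. Then $S$ is a minimal vertex separator, hence a separator node of $\mathcal{G}$, and $a,b$ lie in distinct components of $G\setminus S$; so by Theorem~\ref{Ibarra 1} the clique nodes $C_{1}$ and $C_{k}$ lie in distinct components of $\mathcal{G}\setminus\Preds(S)$. Therefore the path $P$ must meet $\Preds(S)$, which consists of separator nodes only, so $S_{j}\subseteq S$ for some $j$, and by maximality of $S_{j}$ we get $S=S_{j}$.

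For the reverse inclusion, fix $i$; I claim $S_{i}$ is a minimal $(a,b)$-separator. Everything reduces to the assertion that $C_{1}$ and $C_{k}$ lie in distinct components of $\mathcal{G}\setminus\Preds(S_{i})$. Granting it, Theorem~\ref{Ibarra 1} places $a\in C_{1}\setminus S_{i}$ and $b\in C_{k}\setminus S_{i}$ in distinct components of $G\setminus S_{i}$, so $S_{i}$ separates $a$ from $b$; and since $S_{i}\subsetneq C_{i-1}$ and $S_{i}\subsetneq C_{i}$, every $v\in S_{i}$ has a neighbor in $C_{i-1}\setminus S_{i}$, which lies in the component of $G\setminus S_{i}$ containing $a$ by the remarks above, as well as a neighbor in $C_{i}\setminus S_{i}$, which lies in the component containing $b$; hence no proper subset of $S_{i}$ is an $(a,b)$-separator.

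The hard step is the assertion just granted. Inside $B$ it is immediate: $B$ is a tree, $\Preds(S_{i})\cap B=\{S_{i}\}$ by the antichain property, and $S_{i}$ lies on the $B$-path $P$ from $C_{1}$ to $C_{k}$, so $C_{1}$ and $C_{k}$ occupy different subtrees of $B\setminus\{S_{i}\}$. What remains --- and is the technical heart of the proof --- is to rule out that these two subtrees become reconnected through the other boxes of $\mathcal{G}$. I would argue by contradiction using a shortest connecting path $\pi$ in $\mathcal{G}\setminus\Preds(S_{i})$ and the fact that $B$ is a sink box: $\pi$ can leave $B$ only by traversing an incoming arc backwards and re-enter $B$ only by traversing an incoming arc forwards, so each excursion of $\pi$ runs from a separator node strictly contained in a separator node of one subtree to a separator node strictly contained in a separator node of the other, all the while avoiding $\Preds(S_{i})$; combining the antichain property of the separator nodes of $B$ with the clique intersection property propagated along $\pi$ should then force one of these separator nodes on an excursion to lie inside $S_{i}$, contradicting $\pi\cap\Preds(S_{i})=\varnothing$. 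Carrying out this box-by-box bookkeeping is where the real work lies; everything else is a short deduction from Theorems~\ref{Ibarra 1} and~\ref{Ibarra2}.
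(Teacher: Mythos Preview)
Your approach and the paper's both route through Theorem~\ref{Ibarra 1}, but the paper's proof is literally one sentence: ``By Theorem~\ref{Ibarra 1}, each $S_{i}$ is a minimal $(a,b)$-separator, and the other separator nodes cannot separate $a$ from $b$.'' You unpack this into a two-inclusion argument, and your treatment of the inclusion $\{\text{minimal }(a,b)\text{-separators}\}\subseteq\{S_{2},\dots,S_{k}\}$ (any such $S$ forces $P$ to meet $\Preds(S)$, hence $S=S_{j}$ by the sink/maximality observation) and of minimality (each $v\in S_{i}$ has neighbors on both sides via $C_{i-1}$ and $C_{i}$) is correct and more explicit than what the paper writes.

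The discrepancy is in what you call the hard step: showing that $C_{1}$ and $C_{k}$ lie in different components of $\mathcal{G}\setminus\Preds(S_{i})$, i.e.\ that the two subtrees of $B\setminus\{S_{i}\}$ are not rejoined through other boxes. The paper does not isolate this step at all; it is absorbed into the blanket citation of Theorem~\ref{Ibarra 1}, effectively deferring to Ibarra for the underlying structure. Your sketch is reasonable in outline, but you explicitly leave the excursion bookkeeping undone, so as written your argument has a gap precisely where you say it does. If you want a self-contained proof rather than the paper's one-line deferral, either carry that excursion argument to completion, or sidestep the clique-separator graph at this point and invoke the classical clique-tree fact for chordal graphs that the minimal $(a,b)$-separators are exactly the edge-labels on the clique-tree path between a maximal clique containing $a$ and one containing $b$.
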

\begin{proof}
By Theorem \ref{Ibarra 1}, each $ S_{i} $ is a minimal $ (a,b) $-separator, and the other separator nodes cannot separate $ a $ from $ b $. 
\end{proof}

%\begin{corollary}
%Let $ T $ be a tree of a sink box of $ \mathcal{G} $ whose leaves are clique nodes. 
%Then $ G[T] $ is a connected chordal graph with clique-separator graph $ T $. 
%\end{corollary}

\section{Signed graphs and signed-graphic arrangements}\label{sec:signed graphs}
In this section, we introduce various notions about signed graphs which are related to freeness of signed-graphic arrangements. 

\subsection{Necessary conditions for freeness}
A \textbf{path} is a sequence $ v_{1} \dots v_{k} $ of distinct vertices with edges $ \{v_{1},v_{2}\}, \dots, \{v_{k-1},v_{k}\} $, where the sign of each edge may be either positive or negative. 
A signed graph is called \textbf{connected} if any distinct two vertices are connected by a path. 

Recall that a cycle of length $ k \geq 3 $ is a sequence of distinct vertices $ v_{1} \dots v_{k} $ with edges $ \{v_{1},v_{2}\}, \dots, \{v_{k-1},v_{k}\}, \{v_{k},v_{1}\} $ and that a cycle is called \textbf{balanced} if it has an even number of negative edges. 
Moreover, we define cycles of length $ 1 $ and $ 2 $ as follows: 
the $ 1 $-cycle is a vertex with the loop and the $ 2 $-cycle is a pair of vertices $ \{u,v\} $ with the positive and the negative edges between them. 
Figure \ref{Fig:unbalanced 1,2-cycles} illustrates these cycles, where the dashed edge denotes the negative edge.
The $ 2 $-cycle has exactly one negative edge and hence it is unbalanced.  
Every loop corresponds to a hyperplane of the form $ \{x=0\} $, which is equal to $ \{x=-x\} $. 
Therefore it is natural that a loop is assigned to be negative. 
Thus, we define the $ 1 $-cycle to be unbalanced. 
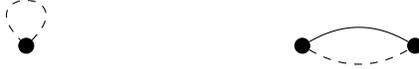
\begin{figure}[t]
\centering
\begin{tikzpicture}[baseline=0]
\draw node[v](1){};
\draw[scale=2,dashed] (1)  to[in=135,out=45,loop] (1);
\end{tikzpicture}
\hspace{25mm}
\begin{tikzpicture}[baseline=0]
\draw (0,0) node[v](1){};
\draw (1.5,0) node[v](2){};
\draw[bend left] (1) to (2);
\draw[bend right, dashed] (1) to (2);
\end{tikzpicture}
\caption{The unbalanced $ 1 $-cycle and the unbalanced $ 2 $-cycle}\label{Fig:unbalanced 1,2-cycles}
\end{figure}

Every arrangement determines a matroid on itself. 
Hence a signed-graphic arrangement $ \mathcal{A}(G) $ determines a matroid on itself and hence on the edge set $ E_{G} \coloneqq E_{G}^{+} \sqcup E_{G}^{-} \sqcup L_{G} $ of the corresponding signed graph $ G $. 
Zaslavsky has classified circuits (minimal dependent sets) of the matroid, which are called \textbf{frame circuits}, as follows (Figure \ref{Fig:exam_frame_circuits} shows examples of frame circuits): 
\begin{proposition}[Zaslavsky {\cite[8B.1]{zaslavsky1982signed-dam}},{\cite[Corollary 3.2]{zaslavsky2012signed-jociss}}]\label{Zaslavsky frame circuit}
A connected signed graph is a frame circuit if and only if it is one of the following graphs: 
\begin{enumerate}[(1)]
\item A balanced cycle. 
\item A pair of disjoint unbalanced cycles together with a path which connects them (called a \textbf{loose handcuff}). 
\item A pair of disjoint unbalanced cycles which intersect in precisely one vertex (called a \textbf{tight handcuff}). 
\end{enumerate}
\end{proposition}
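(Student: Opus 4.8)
The plan is to argue entirely inside the linear matroid on the edge set $E_G$: to an edge $\{i,j\}\in E_G^{+}$ attach the form $x_i-x_j$, to $\{i,j\}\in E_G^{-}$ the form $x_i+x_j$, and to a loop at $i$ the form $x_i$, so that a frame circuit is exactly a minimal linearly dependent family of such forms. Everything rests on the \emph{rank formula}: for a connected signed graph $H$ on $n$ vertices the attached forms span a subspace of dimension $n-1$ if $H$ is balanced and of dimension $n$ if $H$ is unbalanced. I would prove the balanced case by \emph{switching} --- multiplying the coordinate $x_v$ by a sign at each vertex $v$ so that every edge of a fixed spanning tree becomes positive; balancedness forces \emph{every} edge to become positive, and then $H$ is an ordinary graph whose incidence forms are well known to have rank $n-1$. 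For the unbalanced case the same switching turns $H$ into an ordinary connected graph carrying in addition at least one loop or one negative edge, and one checks that adjoining such a form to the span of a spanning tree raises the rank from $n-1$ to $n$, while adjoining further edges can never exceed $n$.

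Granting the rank formula, a connected signed subgraph with $V$ vertices and $E$ edges is dependent exactly when $E\ge V$ (if balanced) or $E\ge V+1$ (if unbalanced). The next step is to pin down the shape of a frame circuit $H$. First I would show $H$ has minimum degree at least $2$: deleting a pendant edge removes one vertex and one edge, leaves $H$ connected and of the same balance type, and by the displayed inequalities leaves it dependent, contradicting minimality. Hence the cyclomatic number $E-V+1$ of $H$ is at least $1$. If it equals $1$, then $H$ is a single cycle: a balanced cycle is minimally dependent because every proper subgraph is a forest, whereas an unbalanced cycle has $E=V=\rank$ and is independent, so it is not a circuit. If the cyclomatic number is at least $2$, then deleting any edge leaves a possibly disconnected subgraph that is still dependent, so minimality forces the cyclomatic number to be exactly $2$ and, in the balanced case, yields a contradiction --- a balanced frame circuit must be a single cycle. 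Thus a frame circuit is either a balanced cycle or an \emph{unbalanced} graph of cyclomatic number $2$ and minimum degree at least $2$, and such a graph is a subdivision of the theta graph, the figure-eight, or the dumbbell, i.e.\ a theta graph, a tight handcuff, or a loose handcuff.

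It remains to decide which of the three unbalanced shapes survive. For a theta graph with internally disjoint $a$--$b$ paths $P_1,P_2,P_3$, its only cycles are $P_i\cup P_j$, and the product of their three signs is $(\mathrm{sgn}\,P_1\cdot\mathrm{sgn}\,P_2\cdot\mathrm{sgn}\,P_3)^{2}=+1$; hence an even number of them are unbalanced, and since $H$ is unbalanced exactly two are, leaving a balanced cycle inside $H$ --- a proper dependent subset. So a theta graph is never a frame circuit. For a handcuff the only cycles are the two constituent cycles $C_1,C_2$; if one of them is balanced it is a proper dependent subset, so both must be unbalanced. When $C_1$ and $C_2$ are both unbalanced I would verify minimality directly from the rank formula: deleting an edge of $C_1$ or $C_2$ leaves a connected subgraph that still contains an unbalanced cycle and has $E'=V'$, hence is independent; deleting an edge on the connecting path of a loose handcuff splits $H$ into two connected pieces, each containing an unbalanced cycle and each with $E''=V''$, hence each independent and so is their union. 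This shows the three listed families are exactly the connected frame circuits. The cycles of lengths $1$ and $2$ need no separate treatment: a loop contributes the single nonzero form $x_i$ and a $2$-cycle the independent pair $x_i-x_j,\ x_i+x_j$, both unbalanced and both independent --- consistent with the statement --- and they occur in the classification only as the constituent cycles $C_i$ of handcuffs.

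The part I expect to be delicate is the rank formula and the switching reduction behind it: making the unbalanced case fully rigorous (that one odd edge or loop raises the rank by exactly one, and that no configuration on $n$ vertices ever exceeds rank $n$), and carrying the bookkeeping cleanly through the short cycles where ``path'' and ``cycle'' nearly collapse. Everything after that is a finite case analysis driven by the two dependence inequalities and the parity identity for the signs of the cycles in a theta graph.
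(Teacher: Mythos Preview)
The paper does not prove this proposition at all; it is quoted from Zaslavsky's work with a citation and used as a black box throughout. Your proposal is therefore not comparable to anything in the paper, but it is a correct self-contained argument. It rests on the rank formula $\rank(\mathcal{A}(H))=|V_H|-b(H)$, which the paper also imports without proof as Theorem~\ref{Zaslavsky rank}; your switching sketch for that formula is the standard one and would go through.

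One phrasing slip to fix: in the sentence ``if the cyclomatic number is at least $2$, then deleting any edge leaves a possibly disconnected subgraph that is still dependent, so minimality forces the cyclomatic number to be exactly $2$'', the hypothesis should read ``at least $3$'' for the logic to parse. Cleaner still is to bypass this step entirely: since a circuit has $|E|=\rank+1$ and $\rank\le |V|$ always, you get $|E|\le |V|+1$ directly, and together with your minimum-degree-two bound $|E|\ge |V|$ this pins the cyclomatic number to $1$ or $2$ with no deletion argument needed. After that your theta-graph parity computation (the product of the three cycle signs is $(\mathrm{sgn}\,P_1\,\mathrm{sgn}\,P_2\,\mathrm{sgn}\,P_3)^2=+1$, so at least one cycle is balanced) and the handcuff minimality check are both correct and complete the classification.
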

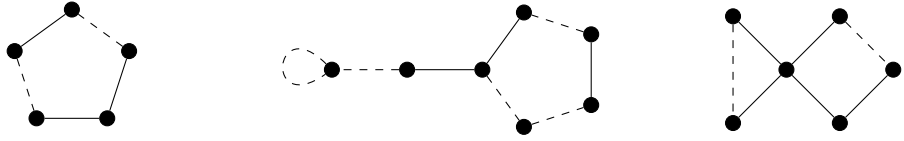
\begin{figure}[t]
\centering
\begin{tikzpicture}[baseline=0]
\foreach \x in {0,...,4}
\draw (90+72*\x:0.8) node[v](\x){};
\draw (0)--(1);
\draw[dashed] (1)--(2);
\draw (2)--(3)--(4);
\draw[dashed] (4)--(0);
\end{tikzpicture}
\hspace{14mm}
\begin{tikzpicture}[baseline=0]
\foreach \x in {0,...,4}
\draw (180+72*\x:0.8) node[v](\x){};
\draw (-1.8,0) node[v](5){};
\draw (-2.8,0) node[v](6){};
\draw[dashed] (0)--(1)--(2);
\draw (2)--(3);
\draw[dashed] (3)--(4);
\draw (4)--(0);
\draw (0)--(5);
\draw[dashed] (5)--(6);
\draw[scale=2,dashed] (6)  to[in=215,out=135,loop] (6);
\end{tikzpicture}
\hspace{14mm}
\begin{tikzpicture}[baseline=0]
\draw (0,0) node[v](1){};
\draw (-0.71, 0.71) node[v](2){};
\draw (-0.71,-0.71) node[v](3){};
\draw ( 0.71, 0.71) node[v](4){};
\draw ( 1.42, 0) node[v](5){};
\draw ( 0.71,-0.71) node[v](6){};
\draw (2)--(1)--(3);
\draw[dashed] (2)--(3);
\draw (4)--(1)--(6)--(5);
\draw[dashed] (4)--(5);
\end{tikzpicture}
\caption{Examples of frame circuits}\label{Fig:exam_frame_circuits}
\end{figure}

The proposition below is a paraphrase of Corollary \ref{circuit nonfree} for signed-graphic arrangements. 
\begin{proposition}\label{frame circuit nonfree}
Let $ G $ be a frame circuit. 
Then $ \mathcal{A}(G) $ is generic. 
Moreover, if $ |E_{G}| \geq 4 $, then $ \mathcal{A}(G) $ is non-free. 
\end{proposition}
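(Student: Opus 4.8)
The plan is to deduce the statement directly from Corollary \ref{circuit nonfree} by recognizing that a frame circuit gives rise to an arrangement that is itself a circuit in the matroid-theoretic sense. The only content beyond a citation is bookkeeping: matching the abstract matroid whose circuits Zaslavsky classified with the matroid that $\mathcal{A}(G)$ carries on its own set of hyperplanes.

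First I would record that the assignment sending an edge of $G$ to its hyperplane in $\mathcal{A}(G)$ is injective: a positive edge $\{i,j\}$ yields $\{x_{i}-x_{j}=0\}$, a negative edge $\{i,j\}$ yields $\{x_{i}+x_{j}=0\}$, and a loop at $i$ yields $\{x_{i}=0\}$, and no two of these linear forms are proportional. In particular, even for a $2$-cycle on $\{i,j\}$ the forms $x_{i}-x_{j}$ and $x_{i}+x_{j}$ are linearly independent. Hence $|\mathcal{A}(G)|=|E_{G}|$, and via this bijection the matroid that $\mathcal{A}(G)$ induces on its ground set is identified with the frame matroid on $E_{G}$.

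Next, I would use that a frame circuit is, by definition, a minimal dependent set of that frame matroid (this is exactly the content of Proposition \ref{Zaslavsky frame circuit} together with the discussion preceding it). Transporting this across the bijection above, $\mathcal{A}(G)$ is dependent while $\mathcal{A}(G)\setminus\{H\}$ is independent for every $H\in\mathcal{A}(G)$; that is, $\mathcal{A}(G)$ is a circuit in the sense of Corollary \ref{circuit nonfree}. Applying that corollary then gives at once that $\mathcal{A}(G)$ is generic, and that it is non-free as soon as $|\mathcal{A}(G)|=|E_{G}|\geq 4$.

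There is essentially no obstacle here beyond care with the identification of the two matroids and with the edge-count versus hyperplane-count; the hypothesis $|E_{G}|\geq 4$ is precisely what is needed because a circuit arrangement on $k$ hyperplanes has rank $k-1$, so $|E_{G}|\geq 4$ is exactly $\rank\mathcal{A}(G)\geq 3$, matching the rank hypothesis of Theorem \ref{RT_Y generic} underlying Corollary \ref{circuit nonfree}. The excluded small frame circuits (balanced triangles and the short handcuffs, all with $|E_{G}|=3$ and rank $2$) are indeed free, so the bound is sharp.
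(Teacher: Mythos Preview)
Your proposal is correct and follows exactly the paper's approach: the paper simply states that this proposition is a paraphrase of Corollary~\ref{circuit nonfree} for signed-graphic arrangements, and your argument just makes the underlying identification of the frame matroid on $E_{G}$ with the matroid on $\mathcal{A}(G)$ explicit. The only additional content you supply---injectivity of the edge-to-hyperplane map over a field of characteristic zero---is routine bookkeeping that the paper leaves implicit.
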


In order to determine that a signed-graphic arrangement is non-free, it is important to describe the localizations of a signed-graphic arrangement in terms of signed graphs.

A signed graph $ F=(F^{+},F^{-},L_{F}) $ is said to be a \textbf{subgraph} of a signed graph $ G=(G^{+},G^{-},L_{G}) $ if $ F^{+}, F^{-} $ are subgraphs of $ G^{+},G^{-} $, respectively, on the same vertex set $ V_{F} \subseteq V_{G} $ and $ L_{F} \subseteq L_{G} $. 
The signed-graphic arrangement $ \mathcal{A}(F) $ corresponding to the subgraph $ F $ can be naturally regarded as a subarrangement of $ \mathcal{A}(G) $. 

A subarrangement of an arrangement $ \mathcal{A} $ is a localization if and only if it is a flat of the matroid on $ \mathcal{A} $. 
Hence we have the following proposition: 
\begin{proposition}[{\cite[Proposition 1.4.11(ii)]{oxley2011matroid}}]\label{matroid_flat}
Let $ F $ be a subgraph of a signed graph $ G $. 
The signed-graphic arrangement $ \mathcal{A}(F) $ is a localization of $ \mathcal{A}(G) $ if and only if 
\begin{align*}
\Set{ e \in E_{G} | \text{$ e $ and some edges in $ F $ form a frame circuit} } \subseteq E_{F}. 
\end{align*}
\end{proposition}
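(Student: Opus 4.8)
The plan is to reduce the statement to the matroid fact recalled just above it, so that the only work is a translation into the language of signed graphs. Recall that, identifying each hyperplane of $\mathcal{A}(G)$ with the edge of $G$ that defines it (this identification is a bijection between $E_{G}$ and $\mathcal{A}(G)$), the matroid that $\mathcal{A}(G)$ carries on itself becomes a matroid $M$ on the ground set $E_{G}$, namely the frame matroid of $G$, whose circuits are by Proposition \ref{Zaslavsky frame circuit} exactly the frame circuits. Under the same identification the subgraph $F$ corresponds to the subset $E_{F}\subseteq E_{G}$, and $\mathcal{A}(F)$ is precisely the subarrangement of $\mathcal{A}(G)$ indexed by $E_{F}$. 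Since a subarrangement of a central arrangement is a localization $\mathcal{A}_{X}$ for some $X\in L(\mathcal{A})$ if and only if it is a flat (closed subset) of $M$, the proposition is equivalent to the assertion that $E_{F}$ is a flat of $M$ if and only if the displayed containment holds.

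For this I would invoke the standard description of flats: a subset $T$ of the ground set of a matroid is closed if and only if, for every element $e\notin T$, there is no circuit $C$ with $e\in C$ and $C\setminus\{e\}\subseteq T$; this is \cite[Proposition 1.4.11(ii)]{oxley2011matroid}. Applying it with $T=E_{F}$, an edge $e\in E_{G}$ satisfies ``$e$ together with some edges of $F$ forms a frame circuit'' precisely when there is a circuit $C$ of $M$ with $e\in C\subseteq E_{F}\cup\{e\}$, that is, precisely when $e$ lies in the closure of $E_{F}$. Hence the set on the left-hand side of the displayed inclusion is exactly the closure of $E_{F}$ in $M$, and that closure is contained in $E_{F}$ if and only if $E_{F}$ is closed, i.e.\ if and only if $\mathcal{A}(F)$ is a localization of $\mathcal{A}(G)$. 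This establishes the equivalence.

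I do not expect a substantive obstacle here: the content is entirely the translation of ``closed subset of the frame matroid'' into the frame-circuit condition. The only points needing (routine) care are that the edge-to-hyperplane assignment is genuinely a bijection $E_{G}\to\mathcal{A}(G)$, so that $M$ is well-defined on $E_{G}$ and $\mathcal{A}(F)$ matches $E_{F}$, and that the circuits of $M$ are exactly the frame circuits of Proposition \ref{Zaslavsky frame circuit}; both facts are already recorded in the paragraphs immediately preceding the statement, so they may be used directly.
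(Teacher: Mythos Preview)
Your proposal is correct and matches the paper's approach: the paper gives no proof at all beyond the citation to \cite[Proposition 1.4.11(ii)]{oxley2011matroid}, leaving the translation from ``localization'' to ``flat of the frame matroid'' implicit, and your argument simply spells out that translation. One minor imprecision: the displayed set is not literally the closure of $E_{F}$ (an element of $E_{F}$ lying in no circuit is in the closure but not in the displayed set), but since $\mathrm{LHS}\setminus E_{F}=\mathrm{cl}(E_{F})\setminus E_{F}$ your conclusion that $\mathrm{LHS}\subseteq E_{F}$ iff $E_{F}$ is closed is unaffected.
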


\begin{example}
Consider the signed graphs in Figure \ref{Fig:examples non-free}. 
\begin{figure}[t]
\centering
\begin{tikzpicture}
\draw (0,1) node[v](1){};
\draw (0,0) node[v](2){};
\draw (1,0) node[v](3){};
\draw (1,1) node[v](4){};
\draw (1)--(4)--(3)--(1)--(2)--(3);
\draw[bend left, dashed] (2) to (1);
\draw[bend left, dashed] (4) to (3);
\draw (0.5,-0.5) node(){$ G_{1} $};
\end{tikzpicture}
\hspace{10mm}
\begin{tikzpicture}
\draw (0,1) node[v](1){};
\draw (0,0) node[v](2){};
\draw (1,0) node[v](3){};
\draw (1,1) node[v](4){};
\draw (1)--(4);
\draw (2)--(3);
\draw[dashed] (2)--(1);
\draw[dashed] (4)--(3);
\draw (0.5,-0.5) node(){$ F_{1} $};
\end{tikzpicture}
\hspace{10mm}
\begin{tikzpicture}
\draw (0,1) node[v](1){};
\draw (0,0) node[v](2){};
\draw (1,0) node[v](3){};
\draw (1,1) node[v](4){};
\draw (1)--(4)--(3)--(1)--(2)--(3);
\draw[bend left, dashed] (2) to (1);
\draw[bend left, dashed] (1) to (4);
\draw (0.5,-0.5) node(){$ G_{2} $};
\end{tikzpicture}
\hspace{10mm}
\begin{tikzpicture}
\draw (0,1) node[v](1){};
\draw (0,0) node[v](2){};
\draw (1,1) node[v](4){};
\draw (2)--(1)--(4);
\draw[bend left, dashed] (2) to (1);
\draw[bend left, dashed] (1) to (4);
\draw (0.5,-0.5) node(){$ F_{2} $};
\end{tikzpicture}
\hspace{10mm}
\begin{tikzpicture}
\draw (0,0.86) node[v](1){};
\draw (-0.5,0) node[v](2){};
\draw ( 0.5,0) node[v](3){};
\draw (0,-0.5) node(){$ G_{3} $};
\draw (1)--(2)--(3)--(1);
\draw[bend right,dashed] (2) to (3);
\draw[scale=1.5,dashed] (1)  to[in=135,out=45,loop] (1);
\end{tikzpicture}
\caption{Examples of signed graphs corresponding to non-free arrangements}\label{Fig:examples non-free}
\end{figure}
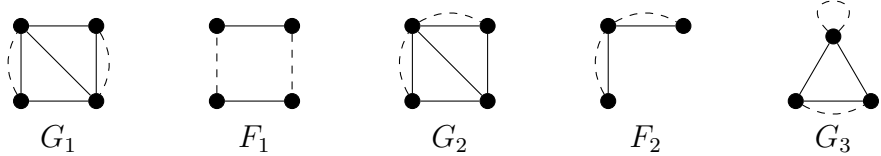
The graph $ G_{1} $ has the balanced cycle $ F_{1} $ and $ G_{2} $ has the tight handcuff $ F_{2} $. 
By Proposition \ref{matroid_flat}, we have that $ \mathcal{A}(F_{1}) $ and $ \mathcal{A}(F_{2}) $ are localizations of $ \mathcal{A}(G_{1}) $ and $ \mathcal{A}(G_{2}) $, respectively. 
Hence, by Propositions \ref{Zaslavsky frame circuit}, \ref{frame circuit nonfree} and \ref{OT_localization}, these four arrangements are non-free. 
A direct computation or the result of Edelman and Reiner (Theorem \ref{ER free}) shows that $ \mathcal{A}(G_{3}) $ is non-free. 
However, every proper localization of $ \mathcal{A}(G_{3}) $ is free. 
Hence it is not enough to focus on circuits when determining freeness of signed graphic arrangements, which is different form the case of graphic arrangements. 
\end{example}

Given a signed graph $ G=(G^{+},G^{-},L_{G}) $, the simple graph $ G^{+} $ can be regarded as a subgraph of $ G $. 
\begin{proposition}\label{G+ is a localization}
Every graphic arrangement $ \mathcal{A}(G^{+}) $ is a localization of a signed-graphic arrangement $ \mathcal{A}(G) $. 
Moreover, if $ \mathcal{A}(G) $ is free, then $ \mathcal{A}(G^{+}) $ is free, or equivalently $ G^{+} $ is chordal.  
\end{proposition}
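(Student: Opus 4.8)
The plan is to deduce the first assertion directly from the localization criterion of Proposition~\ref{matroid_flat}, and then obtain the second assertion by combining the transfer of freeness to localizations with Stanley's characterization. So first I would regard $G^{+}$ as a subgraph of $G$ (with no negative edges and no loops) and check that it meets the hypothesis of Proposition~\ref{matroid_flat}: every edge $e \in E_{G}$ which, together with some edges of $G^{+}$, forms a frame circuit must already be a positive edge of $G$.

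To see this I would run the case analysis provided by Proposition~\ref{Zaslavsky frame circuit}. Since every edge of $G^{+}$ is positive and is not a loop, at most one edge of such a frame circuit, namely $e$, can fail to be positive. If the frame circuit is a balanced cycle, then it has length at least $3$ (the $1$-cycle and the $2$-cycle being unbalanced) and an even number of negative edges; as all its edges except possibly $e$ are positive, $e$ itself must be positive. If the frame circuit is a loose or a tight handcuff, it contains two edge-disjoint unbalanced cycles, and each of them contains at least one negative edge or a loop; this would force at least two non-positive edges, contradicting the fact that only $e$ can be non-positive. Hence in every case $e \in E_{G}^{+}$, and Proposition~\ref{matroid_flat} yields that $\mathcal{A}(G^{+})$ is a localization of $\mathcal{A}(G)$. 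For the remaining claim, if $\mathcal{A}(G)$ is free then its localization $\mathcal{A}(G^{+})$ is free by Proposition~\ref{OT_localization}, and by Stanley's Theorem~\ref{Stanley graphic arrangement} the graphic arrangement $\mathcal{A}(G^{+})$ is free if and only if $G^{+}$ is chordal.

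The only point needing care is the bookkeeping in the frame-circuit case analysis, in particular remembering that an unbalanced cycle of length $1$ or $2$ still contributes a loop or a negative edge, so that the handcuff cases genuinely require two distinct non-positive edges; beyond this I do not anticipate any real obstacle.
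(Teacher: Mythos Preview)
Your proposal is correct and follows exactly the same route as the paper: invoke Proposition~\ref{matroid_flat} together with the frame-circuit classification of Proposition~\ref{Zaslavsky frame circuit} to obtain the localization statement, then apply Proposition~\ref{OT_localization} and Theorem~\ref{Stanley graphic arrangement}. The paper's proof is terser, simply citing these references, whereas you spell out the case analysis explicitly; the content is the same.
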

\begin{proof}
By Propositions \ref{Zaslavsky frame circuit} and \ref{matroid_flat}, we have $ \mathcal{A}(G^{+}) $ is a localization of $ \mathcal{A}(G) $. 
If $ \mathcal{A}(G) $ is free, then $ \mathcal{A}(G^{+}) $ is free by Proposition \ref{OT_localization}. 
Using Theorem \ref{Stanley graphic arrangement}, we have that $ \mathcal{A}(G^{+}) $ is free if and only if $ G $ is chordal. 
\end{proof}

For a signed graph $ G=(G^{+},G^{-},L_{G}) $ and a subset $ W \subseteq V_{G} $, define the \textbf{subgraph induced by $ W $} by $ G[W] \coloneqq (G^{+}[W], G^{-}[W], W \cap L_{G}) $. 
A subgraph $ F $ is called an \textbf{induced subgraph} of $ G $ if $ F=G[W] $ for some $ W \subseteq V_{G} $. 
Using Propositions \ref{Zaslavsky frame circuit} and \ref{matroid_flat}, we obtain the following: 
\begin{proposition}\label{ind_sub is a localization}
Let $ F $ be an induced subgraph of a signed graph $ G $. 
Then $ \mathcal{A}(F) $ is a localization of $ \mathcal{A}(G) $. 
In particular, if $ \mathcal{A}(G) $ is free, then $ \mathcal{A}(F) $ is free. 
\end{proposition}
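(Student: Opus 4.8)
The plan is to deduce the localization claim from Proposition \ref{matroid_flat} and then read off freeness from Proposition \ref{OT_localization}, exactly as in the proof of Proposition \ref{G+ is a localization}. Write $F = G[W]$ for the relevant subset $W \subseteq V_{G}$. By Proposition \ref{matroid_flat}, it suffices to verify that whenever an edge or loop $e \in E_{G}$ together with some edges of $F$ forms a frame circuit $C$, then in fact $e \in E_{F}$. Since $F$ is an \emph{induced} subgraph, it contains every positive edge, every negative edge, and every loop of $G$ all of whose endpoints lie in $W$; hence it is enough to show that the endpoint (or endpoints) of $e$ lie in $W$.

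For this I would invoke the classification in Proposition \ref{Zaslavsky frame circuit}: the connected signed graph $C$ is a balanced cycle, a loose handcuff, or a tight handcuff. In each of these shapes every vertex of $C$ is incident to at least two edges of $C$, a loop at $v$ being counted among the edges incident to $v$. Indeed, a balanced cycle is $2$-regular and has length at least $3$ (the $1$-cycle and the $2$-cycle are unbalanced); in a handcuff the internal path vertices, the ordinary cycle vertices, the two path–cycle junction vertices, and the shared vertex in the tight case all carry at least two incident edges, and if one of the two unbalanced cycles is a single loop or a $2$-cycle, its vertices still meet a second edge — the connecting path is nonempty in the loose case, and the two cycles are distinct in the tight case. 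Consequently every vertex incident to $e$ is incident to some edge $e^{\prime}\neq e$ of $C$. By hypothesis $e^{\prime}$ is an edge of $F=G[W]$, so both endpoints of $e^{\prime}$, and in particular that vertex, lie in $W$. Hence all endpoints of $e$ lie in $W$, so $e \in E_{F}$, and Proposition \ref{matroid_flat} yields that $\mathcal{A}(F)$ is a localization of $\mathcal{A}(G)$. The assertion that $\mathcal{A}(F)$ is free when $\mathcal{A}(G)$ is then follows at once from Proposition \ref{OT_localization}.

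The only place demanding care is the bookkeeping with loops and the short degenerate cycles: one must use that a single loop and a single unbalanced cycle are \emph{not} frame circuits (they are independent), so that $C$ is never such a configuration in its entirety, and that two loops cannot sit at the same vertex, so that a loop occurring inside a handcuff always comes with a genuine second edge at its vertex. Beyond that, the proof is a straightforward inspection of the three normal forms listed in Proposition \ref{Zaslavsky frame circuit}.
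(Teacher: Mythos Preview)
Your argument is correct and follows exactly the route the paper indicates: the paper's proof is the single sentence ``Using Propositions \ref{Zaslavsky frame circuit} and \ref{matroid_flat}, we obtain the following,'' and you have simply filled in the verification that every vertex of a frame circuit meets at least two of its edges, so the unique non-$F$ edge $e$ has all endpoints in $W$. The case analysis (balanced cycles of length $\geq 3$, handcuffs with possible loop or $2$-cycle components, impossibility of two loops at one vertex) is handled correctly, and the freeness conclusion via Proposition \ref{OT_localization} is immediate.
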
 

Given a signed graph $ G $, we can obtain a new signed graph $G^\nu$ using a \textbf{switching function}
$\nu\colon V_G\to \{\pm 1\}$.
The signed graph $ G^{\nu} $ consists of the following data: 
\begin{itemize}
\item $V_{G^\nu} \coloneqq V_{G}$.
\item $E_{G^\nu}^{+} \coloneqq \Set{\{u,v\}\in E_{G}^{+} | \nu(u)=\nu(v)} \cup \Set{\{u,v\}\in E_{G}^{-} | \nu(u)\ne\nu(v)}$.
\item$E_{G^\nu}^{-} \coloneqq \Set{\{u,v\}\in E_{G}^{+} | \nu(u)\ne\nu(v)} \cup \Set{\{u,v\}\in E_{G}^{-} | \nu(u)=\nu(v)}$.
\item $L_{G^\nu} \coloneqq L_{G}$.
\end{itemize}
Note that a switching affects a signed-graphic arrangement as the coordinate exchange $ x_{i} \mapsto \nu(i)x_{i} $ for each $ i \in V_{G} $. 
Therefore a switching preserves freeness of signed-graphic arrangements. 
Moreover, a switching preserves the balance of cycles. 

Recall that a \textbf{balanced chord} of a balanced cycle is an edge that is not part of the cycle but connects two vertices of the cycle, and moreover that separates the cycle into two balanced cycles. 
A signed graph is called \textbf{balanced chordal} if every balanced cycle of length at least $ 4 $ has a balanced chord. 
Note that balanced chordality does not require any information on the loops. 
\begin{lemma}\label{free => balanced chordal}
Suppose that a signed-graphic arrangement $ \mathcal{A}(G) $ is free. 
Then $ G $ is balanced chordal. 
\end{lemma}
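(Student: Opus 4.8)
The plan is to prove the contrapositive: if $G$ is not balanced chordal, then $\mathcal{A}(G)$ is non-free. So suppose $G$ has a balanced cycle $\gamma$ of length $k \geq 4$ that possesses no balanced chord. Since a switching preserves both freeness and the balance of cycles, and since $\gamma$ is balanced, I would first apply a switching function so that every edge of $\gamma$ becomes positive; this is possible exactly because $\gamma$ is balanced. After switching, $\gamma$ is an ordinary (positive) $k$-cycle $v_1 v_2 \cdots v_k v_1$ with $k \geq 4$, and by hypothesis it has no balanced chord. My aim is to exhibit an induced subgraph $F$ of $G$ whose arrangement $\mathcal{A}(F)$ is non-free; then Proposition \ref{ind_sub is a localization} (localizations of free arrangements are free) forces $\mathcal{A}(G)$ to be non-free.

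The natural candidate is $F \coloneqq G[\{v_1,\dots,v_k\}]$, the induced subgraph on the vertices of the cycle. The subtle point is which chords $F$ may contain. A positive chord $\{v_i,v_j\}$ of $\gamma$ would split $\gamma$ into two cycles, both of which would be balanced (since all edges involved are positive), so such a chord would be a balanced chord — contradiction. Hence $F$ has no positive chords of $\gamma$. It may, however, have negative chords. A single negative chord $\{v_i,v_j\}$ splits $\gamma$ into two cycles each carrying exactly one negative edge, hence both unbalanced; together they form a tight handcuff... no — they share the full chord path, so in fact $\{v_1,\dots,v_i,\dots,v_j\}$ with one negative chord is a theta-like graph. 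The cleanest move is: if $F$ has no chords at all, then $\mathcal{A}(F)$ is the arrangement of a balanced $k$-cycle, which by Proposition \ref{Zaslavsky frame circuit}(1) is a frame circuit with $|E_F| = k \geq 4$ edges, hence non-free by Proposition \ref{frame circuit nonfree}, and we are done. If $F$ does have chords (necessarily all negative), I would instead locate a minimal "bad" configuration: among all balanced cycles of $G$ of length $\geq 4$ without a balanced chord, choose $\gamma$ of minimal length. Then I claim the induced subgraph on $\gamma$'s vertices, after switching, contains at most the cycle edges plus negative chords, and any negative chord $\{v_i,v_j\}$ together with one of the two arcs of $\gamma$ produces a shorter balanced cycle with no balanced chord — unless that shorter cycle has length $3$ or acquires a balanced chord. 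Handling this case analysis carefully, one reduces to a configuration that is a frame circuit (a balanced cycle, a loose handcuff, or a tight handcuff) with at least four edges, which is non-free by Proposition \ref{frame circuit nonfree}.

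Concretely, the key steps in order are: (i) reduce to the contrapositive and pick a balanced cycle $\gamma$ of minimal length $\geq 4$ among those without a balanced chord; (ii) switch so all edges of $\gamma$ are positive; (iii) observe that $G[\gamma]$ has no positive chord (else it would be balanced, contradicting the hypothesis); (iv) if $G[\gamma]$ has no negative chord either, conclude directly via Propositions \ref{Zaslavsky frame circuit} and \ref{frame circuit nonfree}; (v) if $G[\gamma]$ has a negative chord $\{v_i,v_j\}$, use it to build a shorter balanced cycle or a small handcuff, and either contradict minimality of $\gamma$ or identify a frame circuit on $\leq k$ but $\geq 4$ edges inside $G[\gamma]$ (noting that one negative chord plus one arc of $\gamma$ is an unbalanced cycle, and two suitably chosen negative chords give two disjoint or one-vertex-sharing unbalanced cycles, i.e.\ a handcuff); (vi) invoke Proposition \ref{ind_sub is a localization} together with Proposition \ref{OT_localization} to pull non-freeness back up to $\mathcal{A}(G)$.

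The main obstacle will be step (v): controlling what happens when the induced subgraph on the cycle carries several negative chords. One must show that in that situation one can always extract, as a subgraph (not necessarily induced — but a subgraph suffices for a frame circuit, which is automatically a localization-relevant dependent set, though for applying Proposition \ref{OT_localization} we want a localization, so one should be careful to produce either an induced subgraph or use Proposition \ref{matroid_flat} directly to verify the frame circuit is a flat), a balanced cycle or a loose/tight handcuff with $\geq 4$ edges. A short combinatorial argument — "take the chord cutting off the smallest arc; that arc plus the chord is either a triangle with a negative chord giving a $2$-cycle handcuff component, or a longer structure" — should close it, but the bookkeeping of signs and the edge count is where care is needed.
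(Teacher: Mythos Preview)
Your steps (i)--(iii) are correct and match the paper exactly. The divergence, and the gap, is in what you do next. You pass to the \emph{induced} subgraph $G[\gamma]$ and then try to manufacture a frame circuit inside it when negative chords are present. Step~(v) is not carried out: the ``short combinatorial argument'' you allude to is not given, and even in your sketch you notice the tension that a frame circuit you find need not be a flat, so Proposition~\ref{OT_localization} does not apply without further work via Proposition~\ref{matroid_flat}. (There is also a small oversight in step~(iv): even when $G[\gamma]$ has no negative chord, it may still carry loops, so it is not literally the balanced cycle as a frame circuit.)

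The paper avoids all of this with one observation you already made but did not exploit: after switching, $\gamma$ has no \emph{positive} chord. That is exactly the statement that $\gamma$ is a chordless cycle of length $\geq 4$ in the simple graph $(G^{\nu})^{+}$; hence $(G^{\nu})^{+}$ is not chordal. Now invoke Proposition~\ref{G+ is a localization}: $\mathcal{A}((G^{\nu})^{+})$ is a localization of $\mathcal{A}(G^{\nu})$, and by Theorem~\ref{Stanley graphic arrangement} it is non-free. By Proposition~\ref{OT_localization}, $\mathcal{A}(G^{\nu})$ is non-free, and since switching preserves freeness, so is $\mathcal{A}(G)$. No case analysis on negative chords or loops is needed, because passing to $G^{+}$ discards them all at once; the relevant localization is $\mathcal{A}(G^{+})$, not $\mathcal{A}(G[\gamma])$.
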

\begin{proof}
Assume that $ G $ is not balanced chordal. 
Then there exists a balanced cycle $ C $ of length at least four with no balanced chords. 
Let $ \nu $ be a switching function on $ G $ such that $ C^{\nu} $ consists of positive edges. 
Since a switching preserves the balance of cycles, we have that $ C^{\nu} $ is a chordless cycle of $ (G^{\nu})^{+} $. 
Therefore $ \mathcal{A}(G^{\nu}) $ is non-free by Proposition \ref{G+ is a localization}. 
Since switching preserves freeness, we have $ \mathcal{A}(G) $ is also non-free. 
Thus, the assertion has been proven. 
\end{proof}

\subsection{Coloring and the chromatic polynomials}
One of remarkable properties of the chromatic polynomial of a simple graph is that the chromatic polynomial coincides with the characteristic polynomial of the corresponding graphic arrangement. 
The notions of coloring and the chromatic polynomial were extended to signed graphs by Zaslavsky \cite{zaslavsky1982signed-dm}.

\begin{definition}
For a positive integer $ k $, let $ \Lambda_{k} \coloneqq \{0, \pm 1, \dots, \pm k \} $. 
A \textbf{proper $ k $-coloring} of a signed graph $ G $ is a function $ \gamma \colon V_{G} \rightarrow \Lambda_{k} $ such that 
\begin{enumerate}[(1)]
\item $ \gamma(u) \neq \gamma(v) $ if $ \{u,v\} \in E_{G}^{+} $. 
\item $ \gamma(u) \neq -\gamma(v) $ if $ \{u,v\} \in E_{G}^{-} $. 
\item $ \gamma(v) \neq 0 $ if $ v \in L_{G} $. 
\end{enumerate}
\end{definition}

\begin{theorem}[Zaslavsky {\cite[Theorem 2.2]{zaslavsky1982signed-dm}}]
There exists a polynomial $ \chi(G,t) $ such that $ \chi(G,2k+1) $ is equal to the number of proper $ k $-coloring of $ G $ for any $k \geq 1$. 
We call $ \chi(G,t) $ the \textbf{chromatic polynomial} of $ G $. 
\end{theorem}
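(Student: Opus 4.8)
The plan is to prove the statement by a direct inclusion--exclusion over the edge-and-loop set $ E_{G} = E_{G}^{+} \sqcup E_{G}^{-} \sqcup L_{G} $, producing an explicit formula that is manifestly polynomial in $ 2k+1 $ (this is Zaslavsky's Whitney-number expansion of the chromatic polynomial). Write $ q \coloneqq 2k+1 $, so that $ \Lambda_{k} $ is a symmetric set of $ q $ colors in which $ 0 $ is the unique element satisfying $ -c = c $. For each edge or loop $ e $, let $ B_{e} $ denote the set of colorings $ \gamma \colon V_{G} \to \Lambda_{k} $ that \emph{violate} the properness condition at $ e $; that is, $ \gamma(u) = \gamma(v) $ for a positive edge $ \{u,v\} $, $ \gamma(u) = -\gamma(v) $ for a negative edge, and $ \gamma(v) = 0 $ for a loop. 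The number of proper $ k $-colorings is $ q^{|V_{G}|} - \bigl| \bigcup_{e} B_{e} \bigr| $, and inclusion--exclusion gives
\begin{align*}
\#\{\text{proper }k\text{-colorings}\} = \sum_{A \subseteq E_{G}} (-1)^{|A|} N(A), \qquad N(A) \coloneqq \Bigl| \bigcap_{e \in A} B_{e} \Bigr|.
\end{align*}

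Next I would compute $ N(A) $, the number of colorings satisfying every equality indexed by $ A $, by analyzing the spanning signed subgraph $ (V_{G}, A) $ component by component, since the constraints across distinct components are independent. On a connected component, fixing a spanning tree propagates a single base color $ c $ to every vertex $ w $ as $ \sigma(w) c $, where $ \sigma(w) \in \{\pm 1\} $ is the product of the signs along the tree path; the non-tree edges and the loops then impose further constraints. If the component is \emph{balanced} (no loop and every cycle carrying an even number of negative edges), these constraints are consistent and $ \sigma $ is well defined, so $ c $ ranges freely over all of $ \Lambda_{k} $, yielding a factor of $ q $. If the component is \emph{unbalanced}, then either it contains a loop, directly forcing the base color to $ 0 $, or it contains an unbalanced cycle, around which sign propagation yields $ c = -c $ and hence again $ c = 0 $; in either case every vertex of the component must be colored $ 0 $, which indeed satisfies all the equalities, so the component contributes a factor of $ 1 = q^{0} $. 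Counting each isolated vertex as a balanced component, we obtain $ N(A) = q^{\,b(A)} $, where $ b(A) $ denotes the number of balanced connected components of $ (V_{G}, A) $.

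Substituting this into the alternating sum shows that the number of proper $ k $-colorings equals
\begin{align*}
\sum_{A \subseteq E_{G}} (-1)^{|A|} (2k+1)^{\,b(A)},
\end{align*}
so the polynomial $ \chi(G,t) \coloneqq \sum_{A \subseteq E_{G}} (-1)^{|A|} t^{\,b(A)} $ satisfies $ \chi(G, 2k+1) = \#\{\text{proper }k\text{-colorings}\} $ for every $ k \geq 1 $, as required; this is consistent with the switching invariance noted earlier, since a switching $ \gamma(v) \mapsto \nu(v)\gamma(v) $ is a bijection of $ \Lambda_{k} $ and permutes the balanced components of each $ (V_{G}, A) $ among themselves. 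The main obstacle, and the only nonroutine point, is the component-wise evaluation of $ N(A) $: one must verify that the sign function $ \sigma $ is well defined precisely on balanced components and that $ -c = c $ has $ 0 $ as its unique solution in $ \Lambda_{k} $, which is exactly where the symmetric choice of color set $ \{0, \pm 1, \dots, \pm k\} $ (rather than $ \{1, \dots, m\} $) is essential. Once $ N(A) = q^{\,b(A)} $ is established, polynomiality of $ \chi(G,t) $ is immediate from the closed form.
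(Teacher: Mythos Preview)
Your argument is correct; the inclusion--exclusion over $E_{G}$ together with the component-by-component evaluation $N(A)=q^{\,b(A)}$ is exactly Zaslavsky's Whitney-number expansion, and the polynomiality follows immediately. Note, however, that the paper does not supply its own proof of this statement---it is quoted as \cite[Theorem~2.2]{zaslavsky1982signed-dm} and used as a black box---so there is nothing in the paper to compare your proof against; your write-up simply fills in the cited result by the standard method.
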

\begin{theorem}[Zaslavsky {\cite[Lemma 4.3]{zaslavsky2012signed-jociss}}]\label{Zaslavsky chromatic characteristic}
The chromatic polynomial of $ G $ and the characteristic polynomial of $ \mathcal{A}(G) $ coincide. 
Namely $ \chi(G,t)=\chi(\mathcal{A}(G),t) $. 
\end{theorem}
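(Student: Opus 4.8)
The plan is to prove the two polynomials agree by evaluating both at infinitely many odd integers, using the finite field (Crapo--Rota) method on the arrangement side and the defining property of $\chi(G,t)$ on the colouring side.

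First I would recall the finite field method. For an arrangement $\mathcal{A}$ in $\mathbb{Q}^{\ell}$ defined over $\mathbb{Z}$ and a prime $q$ such that the reduction $\bar{\mathcal{A}}$ of $\mathcal{A}$ modulo $q$ has the same intersection lattice as $\mathcal{A}$, one has $\chi(\mathcal{A},q)=\#\bigl(\mathbb{F}_{q}^{\ell}\setminus\bigcup_{H\in\mathcal{A}}H\bigr)$; this follows from Whitney's expansion $\chi(\mathcal{A},t)=\sum_{\mathcal{B}\subseteq\mathcal{A}}(-1)^{|\mathcal{B}|}t^{\ell-\rank(\mathcal{B})}$ combined with inclusion--exclusion over $\mathbb{F}_{q}^{\ell}$. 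Since $L(\mathcal{A})$ is determined by which subsets of the defining normals are linearly independent, and each such independence is witnessed by a nonzero integer minor of the (integral) coefficient matrix, the hypothesis fails only for the finitely many primes $q$ dividing the product of these minors; hence it holds for all sufficiently large $q$. For $\mathcal{A}(G)$ the normals are the vectors $e_{i}-e_{j}$, $e_{i}+e_{j}$, $e_{i}$, whose matroid is the signed-graphic matroid with the frame circuits of Proposition~\ref{Zaslavsky frame circuit}, and the relevant minors are $\pm$ powers of $2$; in fact every odd prime is then good, but we do not need this refinement.

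Next I fix a large prime and write it as $q=2k+1$, and identify $\mathbb{F}_{q}$ with the symmetric residue set $\Lambda_{k}=\{0,\pm1,\dots,\pm k\}$. A point $\gamma\in\mathbb{F}_{q}^{V_{G}}$ avoids every hyperplane of $\mathcal{A}(G)$ if and only if $\gamma(i)\neq\gamma(j)$ for each positive edge, $\gamma(i)\neq-\gamma(j)$ for each negative edge, and $\gamma(v)\neq 0$ for each loop --- that is, exactly when $\gamma$ is a proper $k$-colouring of $G$. Hence $\#\bigl(\mathbb{F}_{q}^{V_{G}}\setminus\bigcup_{H}H\bigr)$ equals the number of proper $k$-colourings of $G$, which is $\chi(G,2k+1)$ by the defining property of the chromatic polynomial. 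Combining with the finite field method, $\chi(\mathcal{A}(G),2k+1)=\chi(G,2k+1)$ for every sufficiently large prime $2k+1$; since two polynomials agreeing at infinitely many integers coincide, $\chi(\mathcal{A}(G),t)=\chi(G,t)$.

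The main obstacle is the verification that the finite field method applies, namely that reduction modulo $q$ does not collapse any flat of $L(\mathcal{A}(G))$; as indicated above this reduces to observing that the offending primes are exactly those dividing finitely many integer minors of the incidence matrix of the signed graph, so restricting to large primes $q=2k+1$ suffices. An alternative route is a deletion--contraction induction on $|E_{G}|$, since both $\chi(G,t)$ and $\chi(\mathcal{A}(G),t)$ obey such a recursion; but matching the contraction of a signed graph (with its switchings and the loops or unbalanced digons it may create) against the restriction $\mathcal{A}(G)^{H}$ is more delicate to set up than the counting argument, so I would prefer the finite field method.
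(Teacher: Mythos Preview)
The paper does not give its own proof of this statement; it simply quotes it as Zaslavsky's result and uses it later. So there is nothing in the paper to compare your argument against.

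Your proof via the finite field method is correct. The key steps---Whitney's expansion of $\chi(\mathcal{A},t)$, the fact that only finitely many primes can alter the matroid of an integral arrangement, the identification of $\mathbb{F}_{2k+1}$ with $\Lambda_{k}$ so that the hyperplane-avoidance conditions translate literally into the proper-colouring conditions---are all sound, and your handling of the ``good reduction'' hypothesis is adequate for the conclusion (you are right that every odd prime works, but also right that you do not need this). The alternative deletion--contraction route you sketch at the end would work as well and is closer in spirit to how Zaslavsky originally treats these polynomials, but your counting argument is cleaner and avoids the bookkeeping with switchings and newly created loops that you correctly anticipate.

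One small remark: the claim that the relevant minors are $\pm$ powers of $2$ is true but not entirely obvious; it follows from the fact that the signed incidence matrix represents a frame matroid and is representable over every field of characteristic $\neq 2$. Since you explicitly fall back to ``finitely many bad primes'' this does not affect the validity of your proof, but if you want to assert the sharper statement you should either cite it or give a one-line inductive argument on the size of the submatrix.
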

Our method to prove the freeness is Abe's division theorem (Theorem \ref{Abe Division Theorem}). 
Thanks to Theorem \ref{Zaslavsky chromatic characteristic}, we can describe the characteristic polynomial of a signed-graphic arrangement in terms of coloring of the corresponding signed graph. 
Recall that $ B_{n} = (K_{n},K_{n},\{1,\dots,n\}) $ denotes the complete signed graph with loops. 
The purpose of the rest of this subsection is to prove Lemma \ref{gluing}, which describes the chromatic polynomial of a signed graph obtained by gluing two signed graphs together along a complete signed graph with loops. 

\begin{proposition}\label{coloring1}
Let $ G $ be a signed graph which contains $ B_{n} $ as a subgraph. 
For two proper $ k $-colorings $ \gamma_{1}, \gamma_{2} $ of $ B_{n} $, let $ C_{1}, C_{2} $ be the set of proper $ k $-colorings of $ G $ whose restrictions to $ B_{n} $ are equal to $ \gamma_{1},\gamma_{2} $, respectively. 
Then $ |C_{1}|=|C_{2}| $.
\end{proposition}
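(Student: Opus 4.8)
The plan is to exhibit an explicit bijection between $C_1$ and $C_2$ obtained from a symmetry of the signed graph $B_n$. The key observation is that the symmetry group acting on proper colorings of $B_n=(K_n,K_n,\{1,\dots,n\})$ is transitive on the set of its proper $k$-colorings. Concretely, a proper $k$-coloring of $B_n$ is a function $\gamma\colon\{1,\dots,n\}\to\Lambda_k$ such that $\gamma(i)\neq\pm\gamma(j)$ for all $i\neq j$ (from the positive and negative complete graphs) and $\gamma(i)\neq 0$ (from the loops); equivalently the values $|\gamma(1)|,\dots,|\gamma(n)|$ are $n$ distinct elements of $\{1,\dots,k\}$, together with a choice of sign for each. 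So given two such colorings $\gamma_1,\gamma_2$, there is a composition of a permutation of the ``absolute-value classes'' $\{\pm 1\},\dots,\{\pm k\}$ of $\Lambda_k$ and sign flips on individual classes that carries $\gamma_1$ to $\gamma_2$.

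First I would fix, for each pair $\gamma_1,\gamma_2$, a bijection $\sigma\colon\Lambda_k\to\Lambda_k$ of the form $\sigma(a)=\varepsilon(|a|)\,\pi(|a|)\cdot\mathrm{sign}(a)$ for some permutation $\pi$ of $\{1,\dots,k\}$ and signs $\varepsilon(j)\in\{\pm1\}$, with $\sigma(0)=0$, such that $\sigma\circ\gamma_1=\gamma_2$ on $\{1,\dots,n\}$; such a $\sigma$ exists by the description above, matching $\pi$ and $\varepsilon$ on the $n$ classes used by $\gamma_1$ and extending arbitrarily to the remaining classes. Next I would check that post-composition with $\sigma$ sends proper $k$-colorings of $G$ to proper $k$-colorings of $G$: the three conditions in the definition of a proper coloring are stated as $\gamma(u)\neq\gamma(v)$, $\gamma(u)\neq-\gamma(v)$, and $\gamma(v)\neq 0$, and each is preserved because $\sigma$ is a bijection satisfying $\sigma(-a)=-\sigma(a)$ and $\sigma(a)=0\iff a=0$. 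Finally, the map $\gamma\mapsto\sigma\circ\gamma$ restricts to a map $C_1\to C_2$ (a coloring restricting to $\gamma_1$ on $B_n$ is sent to one restricting to $\sigma\circ\gamma_1=\gamma_2$), and it is invertible with inverse given by post-composition with $\sigma^{-1}$, which has the same form. Hence $|C_1|=|C_2|$.

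The only real point requiring care — and the place I expect a referee to look hardest — is the verification that a suitable $\sigma$ exists, i.e.\ that any two proper $k$-colorings of $B_n$ lie in one orbit under the group of ``signed value permutations'' $\{\sigma:\sigma(a)=\varepsilon(|a|)\pi(|a|)\mathrm{sign}(a),\ \sigma(0)=0\}$. This follows from noting that for a proper coloring $\gamma$ the assignment $i\mapsto\gamma(i)$ determines $n$ distinct classes $|\gamma(i)|\in\{1,\dots,k\}$ together with signs, and that $n\le k$ is forced (otherwise no proper $k$-coloring of $B_n$ exists and both $C_1,C_2$ are empty, so there is nothing to prove); given this, one builds $\pi$ by sending the class $|\gamma_1(i)|$ to $|\gamma_2(i)|$ for each $i$ and extending to a permutation of $\{1,\dots,k\}$, and one builds $\varepsilon$ so that $\varepsilon(|\gamma_1(i)|)\,\mathrm{sign}(\gamma_1(i))=\mathrm{sign}(\gamma_2(i))$. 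Everything else is a routine check that the three coloring conditions, phrased exactly as in the definition preceding the statement, are invariant under post-composition by such a $\sigma$.
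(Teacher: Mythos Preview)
Your proof is correct and is essentially the same as the paper's: both construct a sign-preserving bijection $\sigma\colon\Lambda_k\to\Lambda_k$ (i.e.\ $\sigma(-a)=-\sigma(a)$, $\sigma(0)=0$) sending $\gamma_1$ to $\gamma_2$, and then verify that post-composition by $\sigma$ maps $C_1$ bijectively onto $C_2$. The only cosmetic difference is that you describe $\sigma$ explicitly as a signed permutation and exhibit the inverse $\sigma^{-1}$ directly, whereas the paper extends a partial bijection $\phi$ to $\psi$ and concludes via injectivity plus symmetry.
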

\begin{proof}
For each $ i \in \{1,2\} $, the coloring $ \gamma_{i} $ is injective and the image $ \gamma_{i}(V_{B_{n}}) $ is disjoint to $ -\gamma_{i}(V_{B_{n}}) \coloneqq \Set{-\gamma_{i}(v) | v \in V_{B_{n}}} $. 
We define a bijection $ \phi \colon \gamma_{1}(V_{B_{n}}) \cup (-\gamma_{1}(V_{B_{n}})) \rightarrow \gamma_{2}(V_{B_{n}}) \cup (-\gamma_{2}(V_{B_{n}})) $ by for any $ a \in \gamma_{1}(V_{B_{n}}) $
\begin{align*}
\phi(\pm a) \coloneqq \pm (\gamma_{2} \circ \gamma_{1}^{-1})(a). 
\end{align*}
Then there exists a bijection $ \psi \colon \Lambda_{k} \rightarrow \Lambda_{k} $ such that $ \psi(-a)=-\psi(a) $ and the following diagram commutes. 
\begin{figure}[H]
\centering
\begin{tikzcd}[row sep=tiny]
 & \gamma_{1}(V_{B_{n}}) \cup (-\gamma_{1}(V_{B_{n}})) \ar[r, hook] \ar[dd,"\phi"] & \Lambda_{k} \ar[dd,"\psi"] \\
V_{B_{n}} \ar[ru, end anchor=west, "\gamma_{1}"] \ar[rd, end anchor=west, "\gamma_{2}"'] & & \\
 & \gamma_{2}(V_{B_{n}}) \cup (-\gamma_{2}(V_{B_{n}})) \ar[r, hook] & \Lambda_{k}
\end{tikzcd}
\end{figure}

Define a map $ F \colon C_{1} \rightarrow C_{2} $ by $ F(\gamma) \coloneqq \psi \circ \gamma $. 
We show that $ F $ is well-defined, that is, $ F(\gamma) \in C_{2} $ for any $ \gamma \in C_{1} $. 
First we show that $ F(\gamma) $ is a proper coloring of $ G $. 
This follows by the following arguments. 
\begin{align*}
\{u,v\} \in E_{G}^{+} 
&\Rightarrow \gamma(u) \neq \gamma(v)
\Rightarrow (\psi \circ \gamma)(u) \neq (\psi \circ \gamma)(v) \\
&\Rightarrow F(\gamma)(u) \neq F(\gamma)(v). \\
\{u,v\} \in E_{G}^{-} 
&\Rightarrow \gamma(u) \neq -\gamma(v)
\Rightarrow \psi(\gamma(u)) \neq \psi(-\gamma(v))
\Rightarrow (\psi \circ \gamma)(u) \neq -(\psi \circ \gamma)(v) \\
&\Rightarrow F(\gamma)(u) \neq -F(\gamma)(v). \\
v \in L 
&\Rightarrow \gamma(v) \neq 0
\Rightarrow \psi(\gamma(v)) \neq \psi(0)
\Rightarrow (\psi \circ \gamma)(v) \neq 0 \\
&\Rightarrow F(\gamma)(v) \neq 0. 
\end{align*}
Next we show that the restriction of $ F(\gamma) $ to $ V_{B_{n}} $ equals $ \gamma_{2} $. 
For each vertex $ v \in V_{B_{n}} $
\begin{align*}
F(\gamma)(v) = (\psi \circ \gamma)(v) = \psi(\gamma(v)) = \psi(\gamma_{1}(v)) = \gamma_{2}(v). 
\end{align*}
Hence we may conclude that $ F(\gamma) \in C_{2} $. 

The map $ F $ is injective since $ \psi $ is bijective. 
Therefore we have $ |C_{1}| \leq |C_{2}| $. 
By the similar argument we obtain $ |C_{1}|=|C_{2}| $. 
\end{proof}
\begin{proposition}\label{coloring2}
Let $ G $ be a signed graph which contains $ B_{n} $ as a subgraph. 
For a proper $ k $-coloring $ \gamma $ of $ B_{n} $, let $ C $ denote the set of proper $ k $-colorings of $ G $ whose restrictions to $ B_{n} $ are equal to $ \gamma $. 
Then 
\begin{align*}
|C|=\frac{\chi(G,2k+1)}{\chi(B_{n},2k+1)}. 
\end{align*}
\end{proposition}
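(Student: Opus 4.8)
The plan is to count the proper $k$-colorings of $G$ by grouping them according to their restriction to $B_{n}$. First I would observe that, since $B_{n}$ is a subgraph of $G$, the restriction of any proper $k$-coloring of $G$ to the vertex set $V_{B_{n}}$ is again a proper $k$-coloring of $B_{n}$. This gives a well-defined map $\rho$ from the set $\mathcal{C}_{G}$ of proper $k$-colorings of $G$ to the set $\mathcal{C}_{B_{n}}$ of proper $k$-colorings of $B_{n}$, sending $\gamma'$ to $\gamma'|_{V_{B_{n}}}$. The fibers of $\rho$ partition $\mathcal{C}_{G}$, and the fiber over a proper $k$-coloring $\gamma'$ of $B_{n}$ is exactly the set of proper $k$-colorings of $G$ whose restriction to $B_{n}$ equals $\gamma'$.

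Next I would invoke Proposition \ref{coloring1}: all these fibers have the same cardinality, which for the distinguished coloring $\gamma$ is $|C|$. Hence
\[
\chi(G,2k+1) = |\mathcal{C}_{G}| = \sum_{\gamma' \in \mathcal{C}_{B_{n}}} |\rho^{-1}(\gamma')| = |\mathcal{C}_{B_{n}}| \cdot |C| = \chi(B_{n},2k+1)\cdot|C|,
\]
where the first and last equalities are the defining property of the chromatic polynomial (Zaslavsky), counting proper $k$-colorings.

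Finally, the hypothesis of the proposition supplies a proper $k$-coloring $\gamma$ of $B_{n}$, so $\mathcal{C}_{B_{n}} \neq \varnothing$ and therefore $\chi(B_{n},2k+1) = |\mathcal{C}_{B_{n}}| \geq 1$; in particular it is nonzero, and we may divide to conclude $|C| = \chi(G,2k+1)/\chi(B_{n},2k+1)$. (If one wants to be explicit, $\chi(B_{n},t) = \prod_{i=1}^{n}(t-(2i-1))$, so $\chi(B_{n},2k+1) = 2^{n}k(k-1)\cdots(k-n+1)$, which is nonzero precisely when $k \geq n$, the range in which proper $k$-colorings of $B_{n}$ exist.) There is essentially no obstacle here beyond correctly quoting Proposition \ref{coloring1}; the only point needing care is the nonvanishing of $\chi(B_{n},2k+1)$, which the existence hypothesis on $\gamma$ guarantees at no cost.
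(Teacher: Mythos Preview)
Your proof is correct and follows essentially the same approach as the paper: partition the proper $k$-colorings of $G$ according to their restriction to $B_{n}$, invoke Proposition \ref{coloring1} to conclude all fibers have size $|C|$, and deduce $\chi(G,2k+1)=\chi(B_{n},2k+1)\cdot|C|$. Your treatment is slightly more explicit about the restriction map being well-defined and about why division is permitted, but the argument is otherwise identical to the paper's.
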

\begin{proof}
Let $ \gamma_{1} \coloneqq \gamma, \gamma_{2}, \dots, \gamma_{m} $ be the proper $ k $-colorings of $ B_{n} $, where $ m = \chi(B_{n},2k+1) $. 
For each $ i \in [m] $, let $ C_{i} $ denote the set of proper $ k $-colorings of $ G $ whose restrictions to $ B_{n} $ are equal to $ \gamma_{i} $. 
Then we have that $ \chi(G,2k+1)=\sum_{i=1}^{m}|C_{i}| $. 
By Proposition \ref{coloring1}, the cardinalities of $ |C_{i}| $ are equal to each other. 
Hence $ \chi(G,2k+1)=m|C|=\chi(B_{n},2k+1)|C| $. 
Thus the assertion holds. 
\end{proof}

\begin{lemma}\label{gluing}
Let $ G $ be a signed graph obtained by gluing two signed graphs $ G_{1}, G_{2} $ along a complete signed graph with loops $ B_{n} $. 
Namely, $ G_{1} $ and $ G_{2} $ are induced subgraphs of $ G $ such that $ G_{1} \cup G_{2} = G $ and $ G_{1} \cap G_{2} = B_{n} $. 
Then the following hold:
\begin{enumerate}[(1)]
\item \label{gluing 1}
${\displaystyle
\chi(G,t) = \frac{\chi(G_{1},t)\chi(G_{2},t)}{\chi(B_{n},t)}.}$
\item \label{gluing 2}
If $ G_{1} $ and $ G_{2} $ are balanced chordal, then $ G $ is also balanced chordal. 
\end{enumerate}
\end{lemma}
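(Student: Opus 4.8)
The plan is to handle part (\ref{gluing 1}) by a coloring count and part (\ref{gluing 2}) by a case analysis on how a balanced cycle of $G$ meets the two pieces. The one structural fact I would record first and use throughout is that, since $G_{1}$ and $G_{2}$ are \emph{induced} subgraphs with $G_{1}\cup G_{2}=G$ and $V_{G_{1}}\cap V_{G_{2}}=V_{B_{n}}$, every edge and every loop of $G$ lies entirely inside $G_{1}$ or entirely inside $G_{2}$; in particular no edge of $G$ joins a vertex of $V_{G_{1}}\setminus V_{B_{n}}$ to a vertex of $V_{G_{2}}\setminus V_{B_{n}}$. Note also that $B_{n}$ is a subgraph of each $G_{i}$, so Proposition \ref{coloring2} applies to $B_{n}\subseteq G_{i}$.

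For (\ref{gluing 1}): from the fact above, a map $V_{G}\to\Lambda_{k}$ is a proper $k$-coloring of $G$ if and only if its restrictions to $V_{G_{1}}$ and $V_{G_{2}}$ are proper $k$-colorings of $G_{1}$ and $G_{2}$; conversely, any two proper $k$-colorings of $G_{1}$ and $G_{2}$ agreeing on $V_{B_{n}}$ patch to a proper $k$-coloring of $G$. I would then fix a proper $k$-coloring $\gamma_{0}$ of $B_{n}$; by Proposition \ref{coloring2} the number of proper $k$-colorings of $G_{i}$ restricting to $\gamma_{0}$ is $\chi(G_{i},2k+1)/\chi(B_{n},2k+1)$, so the number of proper $k$-colorings of $G$ restricting to $\gamma_{0}$ is the product of these. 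Summing over the $\chi(B_{n},2k+1)$ colorings of $B_{n}$ gives $\chi(G,2k+1)\chi(B_{n},2k+1)=\chi(G_{1},2k+1)\chi(G_{2},2k+1)$ for all $k\ge 1$, hence the polynomial identity $\chi(G,t)\chi(B_{n},t)=\chi(G_{1},t)\chi(G_{2},t)$, and one divides by the nonzero polynomial $\chi(B_{n},t)$.

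For (\ref{gluing 2}): let $C$ be a balanced cycle of $G$ of length at least four. If all vertices of $C$ lie in $V_{G_{1}}$ (or all in $V_{G_{2}}$), then $C$ is a balanced cycle of the induced subgraph $G_{1}$, and balanced chordality of $G_{1}$ supplies a balanced chord, which remains a balanced chord in $G$. Otherwise $C$ has a vertex strictly on each side. Since no edge of $C$ joins the two sides, a connected sub-walk of $C$ avoiding $V_{B_{n}}$ cannot switch sides, so $C$ meets $V_{B_{n}}$ in at least two vertices; and among them we can choose two, say $u$ and $v$, non-consecutive on $C$: this is automatic if there are at least three (they cannot be pairwise consecutive on a cycle of length $\ge 4$), while if there were exactly two and they were consecutive, deleting the edge between them would leave a path whose interior lies on a single side, forcing $V(C)$ entirely into that side — a contradiction. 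In particular $n=|V_{B_{n}}|\ge 2$, so $G$ contains both a positive and a negative edge between $u$ and $v$, neither lying on $C$. The vertices $u,v$ cut $C$ into two arcs $P_{1},P_{2}$, each of length at least two; writing $n_{i}$ for the number of negative edges of $P_{i}$, balance of $C$ gives $n_{1}+n_{2}$ even, so $n_{1}\equiv n_{2}\pmod{2}$. Choosing the connecting edge $e$ positive when $n_{1}$ is even and negative when $n_{1}$ is odd makes both cycles $P_{1}\cup\{e\}$ and $P_{2}\cup\{e\}$ balanced; thus $e$ is a balanced chord of $C$, and $G$ is balanced chordal.

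The only place I expect to need genuine care is the non-trivial case of (\ref{gluing 2}): extracting two non-consecutive vertices of $V_{B_{n}}$ from $C$, and fixing the sign of the new chord so that both halves remain balanced. Everything else reduces to Proposition \ref{coloring2} and to the routine observations that restricting a proper coloring to an induced subgraph stays proper and that the balance of a cycle depends only on the signs of its edges.
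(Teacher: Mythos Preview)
Your proof is correct and follows essentially the same approach as the paper: both parts use Proposition \ref{coloring2} for the coloring count in (\ref{gluing 1}) and the ``two non-consecutive vertices in $B_{n}$'' observation for (\ref{gluing 2}). The paper's version is considerably terser---it simply asserts without argument that $C$ has two non-consecutive vertices in $B_{n}$ and that one of the two $\{u,v\}$ edges is a balanced chord---whereas you carefully justify both points.
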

\begin{proof}
(\ref{gluing 1})
For each proper $ k $-coloring of $ G_{1} $, the number of possible proper $ k $-colorings of $ G_{2} $ is $ \chi(G_{2},2k+1)/\chi(B_{n},2k+1) $ by Proposition \ref{coloring2}. 
Hence we have 
\begin{align*}
\chi(G,2k+1) = \frac{\chi(G_{1},2k+1) \chi(G_{2},2k+1)}{\chi(B_{n},2k+1)}. 
\end{align*}
Since this equality holds for any positive integer $ k $, the assertion holds. 

(\ref{gluing 2})
Suppose that $ G_{1} $ and $ G_{2} $ are balanced chordal. 
Take a balanced cycle $ C $ of length at least $ 4 $. 
If $ C \subseteq V_{G_{1}} $ or $ C \subseteq V_{G_{2}} $, then $ C $ has a balanced chord since $ G_{1},G_{2} $ are balanced chordal. 
Now, suppose that $ C $ has vertices in both of $ V_{G_{1}}\setminus V_{B_{n}} $ and $ V_{G_{2}}\setminus V_{B_{n}} $. 
Then $ C $ has two non-consecutive vertices in $ B_{n} $. 
Either of the positive or the negative edge between these vertices is a balanced chord of $ C $. 
Therefore $ G $ is also balanced chordal. 
\end{proof}

\subsection{Edge contractions}
We define an edge contraction of a signed graph $ G $ so that the corresponding signed-graphic arrangement becomes the restriction with respect to the hyperplane corresponding to the edge. 

Let $ \{v,w\} $ be a positive edge of $ G $. 
We may define the contraction of the edge in the same way as for simple graphs. 
However, in order to clarify the notion, we fix a direction $ (v,w) $ of the edge $ \{v,w\} $. 
The contraction $ G/(v,w) $ consists of the following data:
\begin{itemize}
\item $ V_{G/(v,w)} \coloneqq V_{G} \setminus \{v\} $. 
\item $ E_{G/(v,w)}^{+} \coloneqq E_{G \setminus \{v\}}^{+} \cup \Set{\{u,w\} | \{u,v\} \in E_{G}^{+}} $. 
\item $ E_{G/(v,w)}^{-} \coloneqq E_{G \setminus \{v\}}^{-} \cup \Set{\{u,w\} | \{u,v\} \in E_{G}^{-}} $. 
\item $ L_{G/(v,w)} \coloneqq 
\begin{cases}
L_{G\setminus \{v\}} \cup \{w\} & \text{ if } v \in L_{G} \text{ or } \{v,w\} \in E_{G}^{-}, \\
L_{G\setminus\{v\}} & \text{ otherwise. }
\end{cases} $
\end{itemize}
It is easy to see that the resulting graph is independent of choice of a direction. 
Namely, $ G/(v,w) $ and $ G/(w,v) $ are isomorphic. 

For a negative edge $ \{v,w\} $, the contraction $ G/(v,w) $ consists of the following data:
\begin{itemize}
\item $ V_{G/(v,w)} \coloneqq V_{G} \setminus \{v\} $. 
\item $ E_{G/(v,w)}^{+} \coloneqq E_{G \setminus \{v\}}^{+} \cup \Set{\{u,w\} | \{u,v\} \in E_{G}^{-}} $. 
\item $ E_{G/(v,w)}^{-} \coloneqq E_{G \setminus \{v\}}^{-} \cup \Set{\{u,w\} | \{u,v\} \in E_{G}^{+}} $. 
\item $ L_{G/(v,w)} \coloneqq 
\begin{cases}
L_{G\setminus \{v\}} \cup \{w\} & \text{ if } v \in L_{G} \text{ or } \{v,w\} \in E_{G}^{+}, \\
L_{G\setminus\{v\}} & \text{ otherwise. }
\end{cases} $
\end{itemize}
In this case, the contractions $ G/(v,w) $ and $ G/(w,v) $ are not isomorphic but switching equivalent, that is, there exists a switching function $ \nu $ on $ G/(v,w) $ such that $ (G/(v,w))^{\nu} $ and $ G/(w,v) $ are isomorphic 
(see, for example, \cite[Lemma 2.8]{zaslavsky2012signed-jociss}). 

We can define the contraction for a loop. 
However, it is not required in this paper. 
From the construction of the contraction, we obtain the following proposition: 
\begin{proposition}\label{contraction restriction}
Let $ \{v,w\} $ be a positive or negative edge of a signed graph $ G $ and $ H $ the corresponding hyperplane in $ \mathcal{A}(G) $. 
Then $ \mathcal{A}(G/(v,w)) = \mathcal{A}(G)^{H} $. 
\end{proposition}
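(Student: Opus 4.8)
The plan is to unwind the definition of the restriction $\mathcal{A}(G)^{H}$ and match its hyperplanes one by one against the edges and loops of the contraction $G/(v,w)$. First I would record that, since $H$ is a hyperplane, $\mathcal{A}(G)_{H}=\{H\}$, whence $\mathcal{A}(G)^{H}=\Set{K\cap H | K\in\mathcal{A}(G),\ K\neq H}$, and that each $K\cap H$ with $K\neq H$ is a genuine hyperplane of $H$. Next I would fix a coordinate system on $H$ adapted to the contraction: when $\{v,w\}$ is the positive edge being contracted, $H=\{x_{v}=x_{w}\}$, and restricting the ambient coordinate functions identifies $H$ with the vector space on the coordinates $\Set{x_{u} | u\in V_{G}\setminus\{v\}}$, in which $x_{v}$ has been replaced by $x_{w}$; when $\{v,w\}$ is negative, $H=\{x_{v}=-x_{w}\}$ and one replaces $x_{v}$ by $-x_{w}$. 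Under this identification the ambient space of $\mathcal{A}(G/(v,w))$ is exactly $H$, since $V_{G/(v,w)}=V_{G}\setminus\{v\}$.

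Then I would carry out the case analysis for a positive edge $\{v,w\}$. If $K\in\mathcal{A}(G)\setminus\{H\}$ corresponds to an edge or loop not incident to $v$, then $K\cap H$ is cut out by the same linear form and corresponds to the identical edge or loop, which is precisely what survives in $E_{G\setminus\{v\}}^{+}$, $E_{G\setminus\{v\}}^{-}$, or $L_{G\setminus\{v\}}$. If $K=\{x_{u}-x_{v}=0\}$ with $u\neq w$, so that $\{u,v\}\in E_{G}^{+}$, then $K\cap H=\{x_{u}-x_{w}=0\}$, which is the transferred positive edge $\{u,w\}$; symmetrically $K=\{x_{u}+x_{v}=0\}$ with $u\neq w$ yields the transferred negative edge $\{u,w\}$. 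Finally, $K=\{x_{v}=0\}$ (occurring when $v\in L_{G}$) and $K=\{x_{v}+x_{w}=0\}$ (occurring when $\{v,w\}\in E_{G}^{-}$, i.e.\ the $2$-cycle) both restrict to $\{x_{w}=0\}$ on $H$, which is exactly the loop at $w$ that the contraction rule adjoins in precisely these two situations. Conversely, every edge and every loop of $G/(v,w)$ arises from one of these hyperplanes, so $\mathcal{A}(G)^{H}$ and $\mathcal{A}(G/(v,w))$ are literally the same set of hyperplanes in $H$. Note that distinct hyperplanes of $\mathcal{A}(G)$ may restrict to a common hyperplane of $H$ (for example $\{x_{u}-x_{v}=0\}$ and $\{x_{u}-x_{w}=0\}$), but this is harmless because $G/(v,w)$ is built from set unions and so already identifies the corresponding parallel edges.

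For the negative-edge case the argument is entirely parallel: now $H=\{x_{v}=-x_{w}\}$, a positive edge $\{u,v\}$ of $G$ restricts to the \emph{negative} edge $\{u,w\}$ and a negative edge $\{u,v\}$ to the positive edge $\{u,w\}$, matching the definition of $G/(v,w)$, while $K=\{x_{v}=0\}$ and $K=\{x_{v}-x_{w}=0\}$ both restrict to $\{x_{w}=0\}$, matching the loop condition ``$v\in L_{G}$ or $\{v,w\}\in E_{G}^{+}$''. Alternatively one may switch $G$ at $v$: this turns $\{v,w\}$ into a positive edge and induces the coordinate change $x_{i}\mapsto\nu(i)x_{i}$, which carries $\mathcal{A}(G)$ to $\mathcal{A}(G^{\nu})$, carries $H$ to the corresponding hyperplane, commutes with restriction, and satisfies $G^{\nu}/(v,w)=G/(v,w)$, reducing the negative case to the positive one. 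I expect the only genuine care needed to be the loop bookkeeping --- verifying that $\{x_{w}=0\}$ lies in $\mathcal{A}(G)^{H}$ exactly when the contraction rule puts $w\in L_{G/(v,w)}$ --- together with keeping straight which coordinate of $H$ each ambient coordinate restricts to.
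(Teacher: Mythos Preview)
Your proposal is correct and is precisely the explicit verification that the paper leaves implicit: the paper offers no proof beyond the sentence ``From the construction of the contraction, we obtain the following proposition,'' and your case analysis simply unpacks that construction. The loop bookkeeping and the handling of coincident restrictions are exactly the points one must check, and you have them right.
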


\subsection{Simplicial extensions}
As mentioned in Theorems \ref{Stanley graphic arrangement} and \ref{FG}, the freeness of graphic arrangements is characterized by existence of a perfect elimination ordering.
In this section, we extend the concept and introduce signed-simplicial vertices and signed elimination orderings.

A signed graph $ G $ is called \textbf{balanced} if every cycle of $ G $ is balanced. 
Note that there is no relationship between balanced chordality and being balanced. 
Define the \textbf{rank} of $ G $ by $ \rank(G) \coloneqq |V_{G}|-b(G) $, where $ b(G) $ denotes the number of balanced connected components of $ G $. 
\begin{theorem}[Zaslavsky {\cite[Theorem 3.5]{zaslavsky2012signed-jociss}}]\label{Zaslavsky rank}
Given a signed graph $ G $, we have $ \rank(\mathcal{A}(G)) = \rank(G) $. 
\end{theorem}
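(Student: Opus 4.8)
The plan is to reduce to the case of a connected signed graph and then to evaluate $\rank(\mathcal{A}(G)) = \operatorname{codim}\bigcap_{H\in\mathcal{A}(G)}H$ directly, treating the balanced and unbalanced cases separately.

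First I would observe that the hyperplanes of $\mathcal{A}(G)$ coming from edges and loops inside a single connected component of $G$ involve only the coordinates indexed by the vertices of that component. Hence if $G_{1},\dots,G_{c}$ are the connected components of $G$, then $\bigcap_{H}H$ is a direct product of the corresponding intersections, so $\rank(\mathcal{A}(G)) = \sum_{i=1}^{c}\rank(\mathcal{A}(G_{i}))$. Since $|V_{G}| = \sum_{i}|V_{G_{i}}|$ and $b(G) = \sum_{i}b(G_{i})$, it suffices to prove that a connected signed graph $H_{0}$ satisfies $\rank(\mathcal{A}(H_{0})) = |V_{H_{0}}| - 1$ when $H_{0}$ is balanced and $\rank(\mathcal{A}(H_{0})) = |V_{H_{0}}|$ otherwise.

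For a connected balanced signed graph $H_{0}$, I would invoke the standard fact (Harary's balance criterion; cf. \cite{zaslavsky1982signed-dam}) that $H_{0}$ is switching equivalent to a signed graph all of whose edges are positive and which has no loops. A switching acts on $\mathcal{A}(H_{0})$ as the invertible coordinate change $x_{i}\mapsto\nu(i)x_{i}$, so it preserves $\rank$; hence $\rank(\mathcal{A}(H_{0}))$ equals the rank of the graphic arrangement of a connected simple graph on $|V_{H_{0}}|$ vertices, whose common intersection is the diagonal line $\Set{x | x_{i}=x_{j}\text{ for all }i,j}$ and therefore has codimension $|V_{H_{0}}|-1$. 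For a connected unbalanced $H_{0}$, I would fix a spanning tree $T$ of its underlying graph and choose a switching function $\nu$ making every edge of $T$ positive; this is possible because $T$ has no cycles, so there is no balance obstruction (define $\nu$ by walking out from a root and recording products of edge signs). On $\bigcap_{H}H$ the equations coming from $T$ force all coordinates indexed by vertices of $H_{0}$ to a common value $c$. Since $H_{0}^{\nu}$ is still unbalanced, it cannot be both loopless and all-positive; so it has either a loop at some vertex $v$, yielding $x_{v}=0$, or a negative edge $\{u,v\}$, yielding $x_{u}+x_{v}=0$. Either way $c=0$, so $\bigcap_{H}H=\{0\}$ and the rank is $|V_{H_{0}}|$. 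Summing over components gives $\rank(\mathcal{A}(G)) = |V_{G}| - b(G) = \rank(G)$.

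The only point that genuinely requires care is the assertion that an unbalanced connected signed graph still carries a loop or a negative edge after a spanning tree has been switched to be positive; this is exactly Harary's balance theorem applied to the switched graph (a loopless signed graph with no negative edges has only balanced cycles, hence is balanced, and switching preserves balance), so it is a citation rather than a real obstacle. Everything else is elementary linear algebra over a field of characteristic zero.
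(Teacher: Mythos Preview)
Your argument is correct. The reduction to connected components, the use of switching to make a balanced component all-positive (hence reducing to the graphic case with rank $|V|-1$), and the spanning-tree switching for an unbalanced component followed by the observation that a surviving loop or negative edge forces the common coordinate value to zero, are all sound over a field of characteristic zero (which is the standing hypothesis in this paper).

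There is nothing to compare against: the paper does not supply its own proof of this theorem. It is quoted from Zaslavsky \cite[Theorem~3.5]{zaslavsky2012signed-jociss} and used as a black box. Your proposal is therefore a self-contained elementary verification of a cited result rather than a different route to a proof the paper gives.
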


\begin{definition}
A vertex $ v $ of a signed graph $ G $ is called \textbf{signed simplicial} if the following statements hold: 
\begin{enumerate}[(1)]
\item If $ \{u_{1},v\}, \{u_{2},v\} \in E_{G}^{+} $ or $ \{u_{1},v\}, \{u_{2},v\} \in E_{G}^{-} $ then $ \{u_{1},u_{2}\} \in E_{G}^{+} $. 
\item If $ \{u_{1},v\} \in E_{G}^{+} $ and $ \{u_{2},v\} \in E_{G}^{-} $ then $ \{u_{1},u_{2}\} \in E_{G}^{-} $. 
\item If $ \{u,v\} \in E_{G}^{+} \cup E_{G}^{-} $ with $ v \in L_{G} $, or $ \{u,v\} \in E_{G}^{+} \cap E_{G}^{-} $ then $ u \in L_{G} $. 
\end{enumerate}
\end{definition}

Adding a signed simplicial vertex affects signed graphs as follows: 
\begin{proposition}\label{add ss}
Let $ v $ be a signed-simplicial vertex of a signed graph $ G $ and let $ F \coloneqq G \setminus \{v\} $. 
Then the following hold. 
\begin{enumerate}[(1)]
\item \label{add ss 1} Suppose that $ v $ has an adjacent vertex $ w $. 
Then $ G/(v,w) = F $. 
\item  \label{add ss 2} If $ v $ has a loop or an adjacent vertex, then $ \rank(G) = \rank(F)+1 $. 
\item \label{add ss 3} Let $ d $ denote the degree of $ v $, that is, the number of incident edges and a loop of $ v $.
Then $ \chi(G,t) = (t-d)\chi(F,t)$.
\item \label{add ss 4} $ G $ is balanced chordal if and only if $ F $ is balanced chordal. 
\end{enumerate}
\end{proposition}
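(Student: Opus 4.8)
The plan is to establish the four claims in the order (1), (3), (2), (4), feeding the defining conditions for a signed-simplicial vertex into each verification. For (1), I would compare $G/(v,w)$ with $F=G\setminus\{v\}$ entry by entry: the vertex sets agree, and the contraction rule only ever tries to adjoin to $E_{F}^{+}$ (resp.\ $E_{F}^{-}$) an edge $\{u,w\}$ arising from an edge $\{u,v\}$ of $G$, or to adjoin $w$ to the loop set when $v\in L_{G}$ or $v,w$ are joined by edges of both signs. Conditions (1) and (2) force $\{u,w\}$ to be already an edge of $G$ of the matching sign, and condition (3) forces $w\in L_{G}$ in the loop case (and in the double-edge case), so nothing new is added and $G/(v,w)=F$ in both the positive- and negative-edge cases.

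For (3), I would count colourings. Restricting a proper $k$-colouring of $G$ to $F$ gives a proper $k$-colouring of $F$, and conversely, extending a proper $k$-colouring $\gamma$ of $F$ to $v$ amounts to choosing $\gamma(v)\in\Lambda_{k}$ outside the forbidden set consisting of $\gamma(u)$ for $\{u,v\}\in E_{G}^{+}$, of $-\gamma(u)$ for $\{u,v\}\in E_{G}^{-}$, and of $0$ if $v\in L_{G}$. The point is that these $d$ forbidden values are pairwise distinct: conditions (1),(2) give, for any two neighbours $u_{1},u_{2}$ of $v$, an edge $\{u_{1},u_{2}\}$ whose sign is exactly the one that forces the two associated forbidden values to differ; condition (3) makes any vertex joined to $v$ by both a positive and a negative edge a loop vertex, so $\gamma(u)\ne-\gamma(u)$, and likewise keeps $0$ away from every edge-forbidden value. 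Hence there are exactly $(2k+1)-d$ extensions, so $\chi(G,2k+1)=\bigl((2k+1)-d\bigr)\chi(F,2k+1)$ for all $k\ge 1$, whence $\chi(G,t)=(t-d)\chi(F,t)$.

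For (2), let $K$ be the connected component of $G$ containing $v$. If $v$ carries only a loop and no incident edge, then $K=\{v\}$ is an unbalanced component, its deletion leaves $b(G)$ unchanged while dropping $|V_{G}|$ by one, so $\rank(F)=\rank(G)-1$. If $v$ has a neighbour, the neighbours of $v$ are pairwise adjacent (by (1),(2)), so $K\setminus\{v\}$ stays connected, and it is balanced iff $K$ is: one direction is clear since it is a subgraph, and for the other an unbalanced cycle through $v$ reroutes through the edge between its two neighbours of $v$ on the cycle (of the matching sign by (1),(2); the degenerate triangle case produces an unbalanced $2$-cycle via (3) that survives in $K\setminus\{v\}$). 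Thus $b(F)=b(G)$ and $\rank(G)=|V_{G}|-b(G)=\rank(F)+1$; alternatively, once (1) and (3) are available this is immediate from Theorem~\ref{Zaslavsky rank} by comparing the $t$-adic valuations of $\chi(G,t)$ and $\chi(F,t)$.

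Finally, for (4): balanced chordality passes to the induced subgraph $F$, since a balanced cycle of length at least $4$ in $F$ is one in $G$, its balanced chord has both endpoints in $V_{F}$ and so lies in $F$, and whether a chord splits a cycle into two balanced cycles depends only on the cycle and the chord. For the converse, given $F$ balanced chordal and a balanced cycle $C$ of $G$ of length at least $4$, we are done if $v\notin C$; if $v\in C$ with neighbours $u_{1},u_{2}$ on $C$, the edge $\{u_{1},u_{2}\}$ of sign equal to the product of the signs of $\{v,u_{1}\}$ and $\{v,u_{2}\}$ exists (by (1),(2)) and is a genuine chord since $|C|\ge 4$; the triangle $u_{1}vu_{2}$ it cuts off is balanced by a direct sign count, and because $C$ is balanced a chord that balances one of the two pieces balances the other as well, so $\{u_{1},u_{2}\}$ is a balanced chord of $C$. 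The one recurring obstacle across all four parts is the bookkeeping around pairs of vertices joined by edges of both signs (the unbalanced $2$-cycles) and around loop vertices, where condition (3) must be invoked at precisely the right moment; with that organised, every step reduces to a short direct check.
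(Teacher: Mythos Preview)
Your proof is correct and follows essentially the same approach as the paper's: each part is verified by direct inspection using the three defining conditions for a signed-simplicial vertex, with the same case analysis for distinctness of forbidden colours in (3), the same balance-of-components argument in (2), and the same balanced-triangle chord $\{u_{1},u_{2}\}$ in (4). The only notable addition is your alternative derivation of (2) from (1), (3), and Theorem~\ref{Zaslavsky rank} via the $t$-adic valuation of $\chi$, which the paper does not mention.
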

\begin{proof}
(\ref{add ss 1})
We need to show that $ E_{G/(v,w)}^{+} = E_{F}^{+}, E_{G/(v,w)}^{-} = E_{F}^{-} $, and $ L_{G/(v,w)} = L_{F} $. 
Suppose that $ \{v,w\} $ is a positive edge. 
If $ \{u,v\} \in E_{G}^{+} $, then we have $ \{ u,w \} \in E_{G}^{+} $. 
Hence $ \Set{\{u,w\} | \{u,v\} \in E_{G}^{+}} \subseteq E_{F}^{+} $. 
Therefore $ E_{G/(v,w)}^{+} = E_{F}^{+} $. 
The other cases are similar. 

(\ref{add ss 2}) 
Since $ \rank(G)=|V_{G}|-b(G) $ and $ \rank(F)=|V_{G}|-1-b(F) $, it suffices to show that $ b(G)=b(F) $. 
When $ v $ is isolated and has a loop, the equation $ b(G)=b(F) $ holds since the loop graph ($ 1 $-cycle) is unbalanced. 
From now on, we assume that $ v $ has an adjacent vertex. 
Without loss of generality, we may assume that $ G $ is connected since removing the vertex $ v $ affects only the connected component of $ G $ including $ v $. 
In this case, $ F $ is also connected since $ v $ is simplicial. 
Therefore it suffices to show that $ F $ is balanced if and only if $ G $ is balanced. 

When $ G $ is balanced, $ F $ is balanced trivially. 
To show the converse, suppose that $ F $ is balanced.  
Let $ C $ be a cycle of $ G $ containing $ v $. 
It is sufficient to show that $ C $ is balanced. 
If the length of $ C $ is $ 1 $ or $ 2 $, then $ F $ has a loop since $ v $ is signed simplicial, which is a contradiction. 
Hence the length of $ C $ is at least $ 3 $. 
Write $ C $ as a sequence of vertices $ v=v_{1},v_{2}, \dots, v_{k} $. 
Assume that $ C $ is unbalanced. 
Since $ v $ is signed simplicial, there is an edge $ e=\{v_{2},v_{k}\} $ forming a balanced $ 3 $-cycle together with the edges $ \{v_{k},v_{1}\}, \{v_{1},v_{2}\} $ in the cycle $ C $. 
Hence the cycle in $ F $ consisting of $ e $ and the edges $ \{v_{2},v_{3}\}, \dots, \{v_{k-1},v_{k}\} $ in the cycle $ C $ is unbalanced, which is a contradiction. 
As a result, $ C $ is balanced. 

(\ref{add ss 3})
Let $ \gamma $ be a proper $ k $-coloring on $ G\setminus\{v\} $, where $ k $ is sufficiently large. 
It is sufficient to show that the number of proper $ k $-colorings on $ G $ which are extensions of $ \gamma $ is $ (2k+1)-d $. 

First, suppose that $ v $ has no loop. 
By the definition of a proper coloring, the color of $ v $ cannot belong to the following set:
\begin{align*}
\Set{\gamma(w) | \{v,w\} \in E_{G}^{+}} \cup \Set{-\gamma(w) | \{v,w\} \in E_{G}^{-}}. 
\end{align*}
We show that the cardinality of this set coincides with the degree of $ v $, that is, the colors corresponding to the incident edges of $ v $ are different from each other. 

Suppose that $ \{v,w_{1}\}, \{v,w_{2}\} \in E_{G}^{+} $, and $ w_{1} \neq w_{2} $. 
Since $ v $ is signed simplicial, we have $ \{w_{1},w_{2}\} \in E_{G}^{+} $ and hence $ \gamma(w_{1}) \neq \gamma(w_{2}) $. 
When $ \{v,w_{1}\}, \{v,w_{2}\} \in E_{G}^{-} $ with $ w_{1} \neq w_{2} $ or $ \{v,w_{1}\} \in E_{G}^{+}, \{v, w_{2}\} \in E_{G}^{-} $ with $ w_{1} \neq w_{2} $, one can prove the assertion in the same way. 
Next, assume that $ \{v,w\} \in E_{G}^{+} $ and $ \{v,w\} \in E_{G}^{-} $. 
Then we have $ w \in L_{G} $ by the definition of a signed-simplicial vertex. 
Hence $ \gamma(w) \neq 0 $. 
In other words, $ \gamma(w) \neq -\gamma(w) $. 

Now, we consider the case $ v $ has a loop. 
In this case, the color of $ v $ is neither a member of the set above nor $ 0 $. 
Since $ v $ is signed simplicial, every adjacent vertex of $ v $ admits a loop, and hence $0$ does not belong to the set above. 
Therefore the forbidden colors of $ v $ coincides with the degree of $ v $. 

(\ref{add ss 4}) 
If $ G $ is balanced chordal, then $ F $ is also balanced chordal, since $ F $ is an induced subgraph of $ G $. 
In order to prove the converse, suppose that $ F $ is balanced chordal. 
Let $ C $ be a balanced cycle of $ G $ which is of length at least $ 4 $ and contains $ v $. 
Write $ C $ as a sequence of vertices $ v=v_{1}, v_{2}, \dots, v_{k} $. 
Since $ v $ is signed simplicial, we have $ \{v_{2},v_{k}\} $ is an edge and the $ 3 $-cycle $ v_{1},v_{2},v_{k} $ is balanced. 
Hence the $ (k-1) $-cycle $ v_{2},\dots, v_{k} $ is also balanced. 
Therefore the edge $ \{v_{2},v_{k}\} $ is a balanced chord of $ C $.  
\end{proof}

\begin{definition}
An ordering $ (v_{1}, \dots, v_{\ell}) $ of the vertices of a signed graph $ G $ is said to be a \textbf{singed elimination ordering} if $ v_{k} $ is signed simplicial in $ G[\{v_{1}, \dots, v_{k}\}] $ for each $ k \in \{1,\dots, \ell \} $. 
\end{definition}
Let $ G $ be a signed graph and $ F $ an induced subgraph. 
We say that $ G $ is a \textbf{simplicial extension} of $ F $ 
if there exists an ordering $(v_{1}, \ldots ,v_{m})$ of the vertices
in $V_{G} \setminus V_{F}$ such that each $v_{i}$ is signed simplicial
in $G[V_{F} \cup \{ v_{1}, \ldots ,v_{i} \}]$.
Zaslavsky characterized supersolvability of signed-graphic arrangements as follows: 
\begin{theorem}[{Zaslavsky \cite[Theorem 2.2]{zaslavsky2001supersolvable-ejoc}}]\label{Zaslavsky SS}
A signed-graphic arrangement $ \mathcal{A}(G) $ is supersolvable if and only if one of the following conditions holds: 
\begin{enumerate}[(1)]
\item $ G $ has a signed elimination ordering. 
\item $ G $ is a simplicial extension of one of the following:
\begin{enumerate}[(i)]
\item The graph $ D_{3} $ shown in Figure \ref{Fig:D3}. 
\item A signed graph $ F=(F^{+},F^{-},\varnothing) $ in which all edges in $ F^{-} $ are incident to a single vertex $ v $, the set of neighbors of $ v $ in $ F^{-} $ induces a complete subgraph in $ F^{+} $, and $ F^{+} $ has a perfect elimination ordering. 
\end{enumerate}
\end{enumerate}
\end{theorem}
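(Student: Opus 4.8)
The plan is to prove both implications through the Björner--Edelman--Ziegler filtration criterion (Theorem \ref{BEZ}), which is the natural bridge between the combinatorics of signed graphs and supersolvability, together with the behaviour of a signed-simplicial vertex recorded in Proposition \ref{add ss}. Throughout, the three defining clauses of signed simpliciality are exactly the combinatorial shadow of the modular-coatom condition (2) of Theorem \ref{BEZ}.

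\emph{Sufficiency.} First I would treat condition (1). Given a signed elimination ordering $(v_{1},\dots,v_{\ell})$, set $G_{k}:=G[\{v_{1},\dots,v_{k}\}]$ and $\mathcal{A}_{k}:=\mathcal{A}(G_{k})$, retaining only those indices at which $v_{k}$ carries a loop or an adjacent vertex, so that by Proposition \ref{add ss}(\ref{add ss 2}) the rank increases by exactly one at each retained step and the rank condition of Theorem \ref{BEZ} holds. The modular condition is then a direct translation of the signed-simplicial axioms: any two hyperplanes in $\mathcal{A}_{k}\setminus\mathcal{A}_{k-1}$ arise from two edges at $v_{k}$ (or an edge and the loop), and each clause supplies precisely the edge of $G_{k-1}$ whose hyperplane contains their intersection (for instance two positive edges $\{v_{k},w_{1}\},\{v_{k},w_{2}\}$ force $\{w_{1},w_{2}\}\in E^{+}_{G_{k-1}}$, and $\{x_{w_{1}}-x_{w_{2}}=0\}$ contains $\{x_{v_{k}}=x_{w_{1}}\}\cap\{x_{v_{k}}=x_{w_{2}}\}$; the loop case uses clause (3)). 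For condition (2), the simplicial-extension vertices contribute the top of the filtration by the same computation, so it remains to show each base graph is supersolvable: for $D_{3}$ this is immediate, since $\mathcal{A}(D_{3})$ is the Weyl arrangement of type $D_{3}\cong A_{3}$; for the special graph $F$ I would build an explicit modular filtration from the perfect elimination ordering of $F^{+}$, placing the distinguished vertex last and invoking the clique condition on its negative neighbours to verify the final modular step.

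\emph{Necessity.} This is the substantial direction. Assuming $\mathcal{A}(G)$ is supersolvable, I would extract from Theorem \ref{BEZ} a modular coatom $\mathcal{A}_{r-1}$ and analyse its signed-graphic meaning. By Theorem \ref{JT SS=>free} and Proposition \ref{G+ is a localization}, the graph $G^{+}$ is already chordal, which rigidly constrains the positive structure. The crux is a classification of modular coatoms of the signed-graphic frame matroid: I would show that such a coatom is either a vertex deletion $\mathcal{A}(G\setminus\{v\})$ with $v$ signed simplicial---in which case Propositions \ref{Stanley SS lattice} and \ref{add ss} allow me to remove $v$ and induct toward a complete signed elimination ordering, yielding (1)---or else no signed-simplicial vertex exists, in which case I must force $G$ to be a simplicial extension of $D_{3}$ or of $F$. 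To reach the latter, I would first peel off all genuinely signed-simplicial vertices (this is precisely the simplicial-extension layer), reducing to an irreducible supersolvable signed graph with no signed-simplicial vertex, and then use the frame-circuit description (Proposition \ref{Zaslavsky frame circuit}) together with the modular-coatom condition to eliminate every configuration except the two exceptional forms.

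The main obstacle will be this classification of irreducible supersolvable signed graphs. Unlike the graphic case, where supersolvability collapses cleanly to a perfect elimination ordering, the signed setting admits genuine modular coatoms that are \emph{not} vertex deletions---already visible in $D_{3}$, which is supersolvable yet has no signed-simplicial vertex. Ruling out all but the two listed families requires a delicate case analysis of how unbalanced cycles and negative edges interact with the modular-coatom condition of Theorem \ref{BEZ}, and this is where essentially all of the work lies.
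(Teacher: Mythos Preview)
The paper does not prove this theorem at all: it is quoted verbatim as a result of Zaslavsky \cite[Theorem 2.2]{zaslavsky2001supersolvable-ejoc} and used as a black box. So there is no ``paper's own proof'' to compare against; your proposal is an attempt to reconstruct Zaslavsky's argument, not anything in this paper.

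On the merits of your sketch: the sufficiency of condition (1) via the Bj\"orner--Edelman--Ziegler filtration is exactly right, and is essentially the computation the paper carries out in Proposition \ref{ss extension}(\ref{ss extension2}). Your treatment of $D_{3}$ via the root-system isomorphism $D_{3}\cong A_{3}$ is also correct. However, your plan for the base graph $F$ in (2)(ii) has a genuine flaw. If you place the distinguished vertex $v$ last and add all its incident edges at once, then for a positive edge $\{v,u\}$ and a negative edge $\{v,w\}$ with $u\neq w$ the intersection $\{x_{v}=x_{u}\}\cap\{x_{v}=-x_{w}\}$ lies in $\{x_{u}+x_{w}=0\}$, which would require $\{u,w\}\in E^{-}_{F}$---impossible, since by hypothesis all negative edges of $F$ are incident to $v$. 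The clique condition on the negative neighbours does not help here. The filtration that actually works is not vertex-by-vertex: one first takes a supersolvable filtration of $\mathcal{A}(F^{+})$ (available since $F^{+}$ is chordal) and then adjoins \emph{all negative edges} in a single final step; the clique hypothesis then exactly verifies the modular condition among those negative hyperplanes, and the rank jumps by one because $F$ becomes unbalanced.

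For the necessity direction you have correctly located the difficulty---modular coatoms of signed-graphic arrangements need not be vertex deletions---but you have not actually carried out the classification; saying ``this is where essentially all of the work lies'' is an accurate diagnosis, not a proof. Zaslavsky's argument for this direction is indeed a substantial case analysis of modular coatoms in the frame matroid, and nothing in the present paper supplies a shortcut.
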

\begin{figure}[t]
\centering
\begin{tikzpicture}
\draw (0,0.865) node[v](1){};
\draw (-0.5,0) node[v](2){};
\draw ( 0.5,0) node[v](3){};
\draw (1)--(2)--(3)--(1);
\draw[bend right, dashed] (1) to (2);
\draw[bend right, dashed] (2) to (3);
\draw[bend right, dashed] (3) to (1);
\end{tikzpicture}
\caption{The graph $ D_{3} $}\label{Fig:D3}
\end{figure}
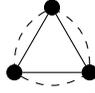
By Theorems \ref{Zaslavsky SS}, \ref{JT SS=>free}, and Lemma \ref{free => balanced chordal}, we obtain the following implications: 
\begin{figure}[H]
\centering
\begin{tikzpicture}
\draw (0,0) node[draw,rectangle, rounded corners=3pt](1){\begin{tabular}{c}
$ G $ has a signed \\ elimination ordering
\end{tabular}};
\draw (5,0) node[draw,rectangle, rounded corners=3pt](2){\begin{tabular}{c}
$ \mathcal{A}(G) $ is \\ supersolvable
\end{tabular}};
\draw (5,-2) node[draw,rectangle, rounded corners=3pt](3){\begin{tabular}{c}
$ \mathcal{A}(G) $ is free
\end{tabular}};
\draw (0,-2) node[draw,rectangle, rounded corners=3pt](4){\begin{tabular}{c}
$ G $ is balanced chordal
\end{tabular}};
\draw[-implies,double equal sign distance] (1)--(2);
\draw[-implies,double equal sign distance] (2)--(3);
\draw[-implies,double equal sign distance] (3)--(4);
\end{tikzpicture}
%\caption{Implications}\label{Fig:implications}
\end{figure}
However, in contrast to graphic arrangements, these four conditions are not equivalent to each other (see \cite[Lemma 4.5(c)]{edelman1994free}).
These conditions are related to simplicial extension as follows: 
\begin{proposition}\label{ss extension}
Let $ G $ be a signed simplicial extension of a signed graph $ F $. 
Then the following hold: 
\begin{enumerate}[(1)]
\item \label{ss extension1} $ G $ has a signed elimination ordering if and only if $ F $ has a signed elimination ordering. 
\item \label{ss extension2} $ \mathcal{A}(G) $ is supersolvable if and only if $ \mathcal{A}(F) $ is supersolvable. 
\item \label{ss extension3} $ \mathcal{A}(G) $ is free if and only if $ \mathcal{A}(F) $ is free. 
\item \label{ss extension4} $ G $ is balanced chordal if and only if $ F $ is balanced chordal. 
\end{enumerate}
\end{proposition}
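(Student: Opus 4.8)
The plan is to reduce everything to a single step: adding one signed-simplicial vertex. By definition of simplicial extension, there is an ordering $(v_{1}, \ldots, v_{m})$ of the vertices of $V_{G} \setminus V_{F}$ so that, writing $F_{0} \coloneqq F$ and $F_{i} \coloneqq G[V_{F} \cup \{v_{1}, \ldots, v_{i}\}]$, each $v_{i}$ is signed simplicial in $F_{i}$ and $F_{m} = G$. Thus it suffices to prove each of the four equivalences in the case $m = 1$, i.e. when $G$ is obtained from $F$ by adding a single signed-simplicial vertex $v$; the general case then follows by induction on $m$. So fix $F$, and let $v$ be signed simplicial in $G = G[V_{F} \cup \{v\}]$ with $F = G \setminus \{v\}$.

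For (\ref{ss extension1}), note that if $F$ has a signed elimination ordering $(w_{1}, \ldots, w_{\ell})$, then appending $v$ gives an ordering of $V_{G}$; since $v$ is signed simplicial in $G[V_{F} \cup \{v\}] = G$ and each $w_{k}$ remains signed simplicial in $G[\{w_{1}, \ldots, w_{k}\}] = F[\{w_{1}, \ldots, w_{k}\}]$ (this graph is unchanged), this is a signed elimination ordering of $G$. Conversely, if $G$ has a signed elimination ordering, then $v$ need not come last, so one cannot just delete it; instead I would argue that signed simpliciality of a vertex $u \ne v$ in an induced subgraph is preserved when $v$ is removed, because the defining conditions (1)--(3) only involve the closed neighborhood of $u$, and removing a vertex can only destroy edges, never create the configurations forbidden by those conditions. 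Hence restricting a signed elimination ordering of $G$ to $V_{F}$ yields one for $F$. Part (\ref{ss extension4}) is exactly Proposition \ref{add ss}(\ref{add ss 4}) applied to the single vertex $v$, and then chained through the induction.

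For (\ref{ss extension3}), I would use Proposition \ref{add ss}: by (\ref{add ss 3}) we have $\chi(G, t) = (t - d)\chi(F, t)$ where $d$ is the degree of $v$. If $v$ has an adjacent vertex $w$, then by (\ref{add ss 1}) the contraction $G/(v, w)$ equals $F$, so by Proposition \ref{contraction restriction} the restriction $\mathcal{A}(G)^{H}$ (with $H$ the hyperplane of the edge $\{v, w\}$) equals $\mathcal{A}(F)$. Then $\chi(\mathcal{A}(G)^{H}, t) = \chi(F, t)$ divides $\chi(G, t) = \chi(\mathcal{A}(G), t)$ (using Theorem \ref{Zaslavsky chromatic characteristic}), so if $\mathcal{A}(F)$ is free the Division Theorem (Theorem \ref{Abe Division Theorem}) gives that $\mathcal{A}(G)$ is free. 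Conversely, $\mathcal{A}(F)$ is a localization of $\mathcal{A}(G)$ by Proposition \ref{ind_sub is a localization}, so freeness of $\mathcal{A}(G)$ forces freeness of $\mathcal{A}(F)$ by Proposition \ref{OT_localization}. The degenerate cases — $v$ isolated with a loop, or $v$ isolated without a loop — are handled directly: adding an isolated looped vertex multiplies $D(\mathcal{A})$ by a free rank-one factor, and adding a genuinely isolated vertex adds a coordinate on which no hyperplane depends; both preserve freeness in both directions.

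For (\ref{ss extension2}), I would run the analogous argument using the supersolvability machinery: $\mathcal{A}(F)$ is a localization of $\mathcal{A}(G)$, so supersolvability of $\mathcal{A}(G)$ descends to $\mathcal{A}(F)$ by Proposition \ref{Stanley SS lattice}. For the converse, if $\mathcal{A}(F)$ is supersolvable, take the filtration $\mathcal{A}(F) = \mathcal{B}_{r} \supseteq \cdots \supseteq \mathcal{B}_{1}$ from Theorem \ref{BEZ} and extend it by appending the hyperplanes through $v$ at the top: set $\mathcal{B}_{r+1} \coloneqq \mathcal{A}(G)$ if $v$ contributes new hyperplanes raising the rank by one (which it does, by Proposition \ref{add ss}(\ref{add ss 2})). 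I must check condition (2) of Theorem \ref{BEZ} for the new top layer: for distinct new hyperplanes $H, H'$ through (edges or loop at) $v$, their intersection $H \cap H'$ is contained in some hyperplane of $\mathcal{A}(F)$ — this is precisely where the signed-simplicial conditions on $v$ are used, since they guarantee that the "links" at $v$ pairwise span hyperplanes already coming from $F$ (e.g. two positive edges $\{v, u_{1}\}, \{v, u_{2}\}$ force the edge $\{u_{1}, u_{2}\} \in E_{G}^{+}$, so $H_{\{v,u_{1}\}} \cap H_{\{v,u_{2}\}} \subseteq H_{\{u_{1},u_{2}\}}$). This verification, case-by-case over the types of edges and loops at $v$ exactly as in the proof of Proposition \ref{add ss}(\ref{add ss 3}), is the main obstacle; the other three parts are comparatively routine once the single-vertex reduction and Proposition \ref{add ss} are in hand.
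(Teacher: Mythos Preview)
Your proposal is correct and follows essentially the same route as the paper: reduce to the single-vertex step, use Proposition \ref{add ss}(\ref{add ss 4}) for (\ref{ss extension4}), use localization (Propositions \ref{ind_sub is a localization}, \ref{OT_localization}, \ref{Stanley SS lattice}) for the $G \Rightarrow F$ directions of (\ref{ss extension2}) and (\ref{ss extension3}), and for the $F \Rightarrow G$ directions use the Bj{\"o}rner--Edelman--Ziegler filtration with the signed-simplicial case check in (\ref{ss extension2}) and the Division Theorem via Propositions \ref{add ss}(\ref{add ss 1}),(\ref{add ss 3}) and \ref{contraction restriction} in (\ref{ss extension3}). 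The paper handles the degenerate isolated cases in the same way (product decomposition), and its treatment of (\ref{ss extension1}) is the terse ``clear from the definition,'' whereas you spell out the restriction argument; your version is a welcome clarification but not a different idea.
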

\begin{proof}
Without loss of generality, we may assume that $ F=G\setminus\{v\} $, where $ v $ is a signed-simplicial vertex. 

(\ref{ss extension1}) 
This is clear from the definition of a signed elimination ordering. 

(\ref{ss extension2})
If $ \mathcal{A}(G) $ is supersolvable, then $ \mathcal{A}(F) $ is supersolvable by Propositions \ref{ind_sub is a localization} and \ref{Stanley SS lattice}. 
To show the converse, suppose that $ \mathcal{A}(F) $ is supersolvable. 
If $ v $ has no loop and isolated, then $ \mathcal{A}(F) = \mathcal{A}(G) $, and hence $ \mathcal{A}(G) $ is supersolvable. 
Suppose that $ v $ has a loop or an adjacent vertex. 
By Theorem \ref{Zaslavsky rank} and Proposition \ref{add ss}(\ref{add ss 2}), we have $ \rank(\mathcal{A}(G))=\rank(\mathcal{A}(F))+1 $. 

We show that for distinct hyperplanes $ H,H^{\prime} \in \mathcal{A}(G)\setminus\mathcal{A}(F) $, there exists $ H^{\prime\prime} \in \mathcal{A}(F) $ such that $ H \cap H^{\prime} \subseteq H^{\prime\prime} $. 
Suppose that $ H,H^{\prime} $ correspond to positive edges $ \{u_{1},v\},\{u_{2},v\} \in E_{G}^{+} $. 
Since $ v $ is signed simplicial, we have $ \{u_{1},u_{2}\} \in E_{G}^{+} $. 
Let $ H^{\prime\prime} $ be the hyperplane corresponding to the edge $ \{u_{1},u_{2}\} $. 
Then we have $ H \cap H^{\prime} \subseteq H^{\prime\prime} \in \mathcal{A}(F) $. 
The other cases are similar. 
Using Theorem \ref{BEZ}, we conclude that $ \mathcal{A}(G) $ is supersolvable. 

(\ref{ss extension3}) 
If $ \mathcal{A}(G) $ is free, then $ \mathcal{A}(F) $ is free by Propositions \ref{ind_sub is a localization} and \ref{OT_localization}. 
We prove the converse. 
If $ v $ is isolated, then $ \mathcal{A}(G) $ is a product of $ \mathcal{A}(F) $ and a $ 1 $-dimensional arrangement. 
Hence $ \mathcal{A}(G) $ is free. 
Assume that $ v $ has an adjacent vertex $ w $. 
Let $ H $ be the hyperplane corresponding to $ \{v,w\} $. 
Then we have 
\begin{align*}
\mathcal{A}(G)^{H} = \mathcal{A}(G/(v,w)) = \mathcal{A}(F)
\end{align*}
by Propositions \ref{contraction restriction} and \ref{add ss}(\ref{add ss 1}). 
Using Proposition \ref{add ss}(\ref{add ss 3}), and Theorems \ref{Zaslavsky chromatic characteristic} and \ref{Abe Division Theorem}, we have that $ \mathcal{A}(G) $ is free. 

(\ref{ss extension4}) 
This follows by Proposition \ref{add ss}(\ref{add ss 4}). 
\end{proof}

\section{Divisional edges and  vertices}\label{sec:divisional vertex}
In this section, we introduce the notions of divisional edges and vertices of a signed graph. 
These will play an important role to prove our main theorem. 

\begin{definition}
An edge $ \{v,w\} $ of a signed graph $ G $ is said to be \textbf{divisional} if $ \chi(G/(v,w),t) $ divides $ \chi(G,t) $. 
Note that, since $ G/(v,w) $ and $ G/(w,v) $ are switching equivalent and hence the chromatic polynomials of them coincide, the definition of a divisional edge is independent on the choice of directions. 
The endvertices $ v,w $ are called \textbf{divisional vertices}. 
\end{definition}

A motivation of introducing a divisional edge is the lemma below. 
\begin{lemma}\label{division theorem for signed graph}
Let $ e $ be a divisional edge of a signed graph $ G $. 
Suppose that $ \mathcal{A}(G/e) $ is free. 
Then $ \mathcal{A}(G) $ is free. 
\end{lemma}
\begin{proof}
This follows by Proposition \ref{contraction restriction} and Theorem \ref{Abe Division Theorem}. 
\end{proof}

To use this lemma, we need to know what edges are divisional. 
The following proposition raises a sufficient condition. 
\begin{proposition}\label{ss => divisional}
Every incident edge of a signed-simplicial vertex is divisional. 
In other words, a signed-simplicial vertex and its neighbors are divisional. 
\end{proposition}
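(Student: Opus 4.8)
Here is the plan. The whole statement should fall out of Proposition \ref{add ss}, so the strategy is simply to assemble its parts. Let $v$ be a signed-simplicial vertex of $G$, let $\{v,w\}$ be an edge incident to $v$, and set $F \coloneqq G\setminus\{v\}$.

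First I would use Proposition \ref{add ss}(\ref{add ss 1}): since $v$ is signed simplicial and has the adjacent vertex $w$, the contraction satisfies $G/(v,w) = F$, and in particular $\chi(G/(v,w),t) = \chi(F,t)$. (Recall that the chromatic polynomial of a contraction does not depend on the chosen direction of the edge, since $G/(v,w)$ and $G/(w,v)$ are switching equivalent, so this computation is direction-free.) Next I would apply Proposition \ref{add ss}(\ref{add ss 3}) to the same vertex $v$: letting $d$ denote the number of edges and the loop incident to $v$, we obtain $\chi(G,t) = (t-d)\,\chi(F,t)$.

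Combining the two identities gives $\chi(G,t) = (t-d)\,\chi(G/(v,w),t)$, so $\chi(G/(v,w),t)$ divides $\chi(G,t)$; by definition this says exactly that $\{v,w\}$ is a divisional edge. Since this holds for every edge incident to $v$, and since the endvertices of a divisional edge are by definition divisional vertices, both $v$ and each of its neighbors are divisional, which is the second formulation.

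There is essentially no obstacle remaining at this point: all of the real work has already been done in establishing Proposition \ref{add ss}, namely that contracting a signed-simplicial vertex along one of its incident edges merely deletes that vertex, and that deleting such a vertex peels off the linear factor $(t-d)$ from the chromatic polynomial. The only thing to be slightly careful about is the degenerate case in which $v$ carries a loop but is otherwise isolated; then there is no incident edge and the statement about divisional edges is vacuous, so this case requires no separate treatment.
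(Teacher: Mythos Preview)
Your proof is correct and follows exactly the same approach as the paper: invoke Proposition \ref{add ss}(\ref{add ss 1}) to identify $G/(v,w)$ with $F=G\setminus\{v\}$, and Proposition \ref{add ss}(\ref{add ss 3}) to factor $\chi(G,t)=(t-d)\chi(F,t)$, yielding the divisibility. The paper's own proof is simply the one-line citation of these two parts of Proposition \ref{add ss}; you have merely spelled out the combination explicitly.
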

\begin{proof}
This follows by Proposition \ref{add ss} (\ref{add ss 1}) and (\ref{add ss 3}). 
\end{proof}

\section{Proof of Theorem \ref{main theorem}}\label{sec:proof}
In this section, we focus on the signed-graphic arrangement $ \mathcal{A}(G) $ with $ G^{+} \supseteq G^{-} $ and prove Theorem \ref{main theorem}. 

\subsection{The case $ G^{+} $ is complete}
As mentioned in Section \ref{sec:introduction}, Edelman and Reiner characterized freeness of the signed-graphic arrangements corresponding a signed graph $ G=(K_{\ell},G^{-},L_{G}) $. 
See Subsection \ref{subsec:threshold graph} for terminologies of threshold graphs. 
\begin{theorem}[Edelman-Reiner {\cite[Theorem 4.6]{edelman1994free}}]\label{ER free}
A signed-graphic arrangement $ \mathcal{A}(K_{\ell},G^{-},L_{G}) $ is free if and only if $ G^{-} $ is threshold and $ L_{G} $ is an initial segment of some degree ordering of $ G^{-} $. 
\end{theorem}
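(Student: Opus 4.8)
The plan is to prove Theorem~\ref{ER free} by combining the inductive tools of the preceding sections with the structure of threshold graphs. Since $G^+ = K_\ell$, every pair of vertices is joined by a positive edge, so the only failures of the signed-simplicial condition come from negative edges and loops. Concretely, after fixing a degree ordering $(v_1,\dots,v_\ell)$ of $G^-$, the last vertex $v_\ell$ (of minimal negative degree) is signed simplicial in $G$ precisely when its negative neighbourhood and loop behaviour are compatible with the conditions in the definition, and for a threshold graph this is governed by the forbidden subgraph characterisation of Theorem~\ref{Golumbic}.

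First I would prove the ``if'' direction. Assume $G^-$ is threshold and $L_G$ is an initial segment of a degree ordering of $G^-$. The strategy is to exhibit a signed elimination ordering, or more precisely to peel off signed-simplicial vertices one at a time using Proposition~\ref{ss extension}(\ref{ss extension3}). Using the recursive construction of threshold graphs (adding isolated or dominating vertices), I would induct: the vertex added last in the threshold construction is either isolated in $G^-$ (hence has no negative edges, and one checks the loop condition using that $L_G$ is an initial segment of the degree ordering), or dominating in $G^-$. A dominating vertex of $G^-$ is adjacent by a negative edge to every other vertex; since $G^+=K_\ell$ it is also adjacent by a positive edge to every other vertex, so conditions (1) and (2) of signed simpliciality reduce to completeness of $G^+$, which holds, and condition (3) reduces to a statement about loops that again follows from $L_G$ being an initial segment. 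Removing such a vertex reduces $(K_\ell, G^-, L_G)$ to $(K_{\ell-1}, G^- \setminus \{v\}, L_G \setminus \{v\})$, where $G^-\setminus\{v\}$ is again threshold with $L_G\setminus\{v\}$ an initial segment of its degree ordering, so the induction closes and Proposition~\ref{ss extension}(\ref{ss extension3}) (with the base case $\ell=1$) gives freeness.

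Next the ``only if'' direction. Assume $\mathcal{A}(K_\ell, G^-, L_G)$ is free. If $G^-$ is not threshold, then by Theorem~\ref{Golumbic} it contains an induced subgraph isomorphic to $2K_2$, $C_4$, or $P_4$. For each of these I would locate, inside $G = (K_\ell, G^-, L_G)$ restricted to those four vertices, a frame circuit on at least four edges or a balanced-non-chordal obstruction, and invoke Proposition~\ref{ind_sub is a localization} together with Proposition~\ref{frame circuit nonfree} (or Lemma~\ref{free => balanced chordal}) to contradict freeness. For instance, a negative $C_4$ in $G^-$ together with positive chords, or the two disjoint negative edges of $2K_2$ completed to unbalanced $2$-cycles via the positive edges of $K_\ell$ forming a tight or loose handcuff, should produce a non-free localization. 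If $G^-$ is threshold but $L_G$ is not an initial segment of a degree ordering, then there are vertices $u, w$ with $\deg_{G^-}(u) > \deg_{G^-}(w)$, $w \in L_G$, $u \notin L_G$; I would build a small induced subgraph (a loop at $w$, a negative edge or edges realising the degree gap, and positive edges from $K_\ell$) that is either a tight handcuff with a loop or a non-balanced-chordal configuration, again contradicting freeness via Proposition~\ref{ind_sub is a localization}.

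The main obstacle I anticipate is the bookkeeping in the ``only if'' direction: translating each forbidden induced subgraph of $G^-$, and each violation of the initial-segment condition, into a concrete signed subgraph of $G$ whose arrangement is provably non-free, and checking in each case that this subgraph is genuinely \emph{induced} (so that Proposition~\ref{ind_sub is a localization} applies) rather than merely a subgraph. There may also be a subtlety with switching: some obstructions will only reveal themselves as a chordless positive cycle \emph{after} applying a suitable switching function, so I would need to track balance carefully, using that switching preserves both freeness and balance of cycles. The ``if'' direction should be comparatively routine once the threshold recursion is aligned with the signed-simplicial definition, the only delicate point being the precise interaction between the loop set and the degree ordering when the added vertex is isolated in $G^-$.
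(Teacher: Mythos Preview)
The paper does not prove this theorem at all: it is quoted from Edelman and Reiner and used as a black box, so there is no ``paper's own proof'' to compare against. That said, your proposed strategy for the ``if'' direction has a genuine gap.

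You plan to show freeness by exhibiting a signed elimination ordering, peeling off signed-simplicial vertices one at a time. But not every free arrangement of the form $\mathcal{A}(K_\ell,G^{-},L_G)$ admits a signed elimination ordering. The graph $D_3=(K_3,K_3,\varnothing)$ of Figure~\ref{Fig:D3} is the standard obstruction: here $G^{-}=K_3$ is threshold and $L_G=\varnothing$ is trivially an initial segment, so Theorem~\ref{ER free} says $\mathcal{A}(D_3)$ is free (indeed supersolvable, by Theorem~\ref{Zaslavsky SS}(2)(i)). Yet no vertex of $D_3$ is signed simplicial: for any vertex $v$ and any other vertex $u$ we have $\{u,v\}\in E^{+}\cap E^{-}$, so condition~(3) of the definition forces $u\in L_G=\varnothing$, a contradiction. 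This is precisely why Proposition~\ref{ER SEO} carries the extra hypothesis that $L_G$ contain at least one endvertex of every negative edge, and why the paper remarks (just after Theorem~\ref{Zaslavsky SS}) that the four implications displayed there are strict. Your inductive peeling argument therefore cannot close in general; some other mechanism---in Edelman and Reiner's original paper, an explicit construction of a basis of $D(\mathcal{A})$ together with addition--deletion---is needed to handle the cases where the loop set is too small to make the minimal-degree vertex signed simplicial.

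Your ``only if'' direction is on firmer ground. The threshold condition on $G^{-}$ follows directly from Lemma~\ref{free => balanced chordal} combined with Proposition~\ref{bc threshold}, so the case analysis over $2K_2,C_4,P_4$ you outline is already packaged in the paper. The remaining step---showing that $L_G$ must be an initial segment---is where the real work lies, and your sketch (finding $u\notin L_G$, $w\in L_G$ with $\deg_{G^{-}}(u)>\deg_{G^{-}}(w)$ and building a non-free induced configuration) is plausible but would need to be made precise; in particular you must produce a localization that is actually non-free, and for small configurations like $G_3$ in Figure~\ref{Fig:examples non-free} this does not follow from Proposition~\ref{frame circuit nonfree} alone, as every proper localization of $\mathcal{A}(G_3)$ is free.
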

When $ \mathcal{A}(K_{\ell},G^{-},L_{G}) $ is free, the signed graph $ (K_{\ell},G^{-},L_{G}) $ is balanced chordal by Lemma \ref{free => balanced chordal}. 
Actually, the following proposition holds. 
\begin{proposition}\label{bc threshold}
Let $ G=(K_{\ell},G^{-}) $. 
Then $ G $ is balanced chordal if and only if $ G^{-} $ is threshold. 
\end{proposition}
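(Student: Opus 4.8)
The plan is to prove both directions by leveraging the forbidden-subgraph characterization of threshold graphs (Theorem \ref{Golumbic}) together with the fact that, when $G^{+}=K_{\ell}$, every pair of vertices is joined by a positive edge, so a balanced cycle can be "shortcut" by positive chords very freely. First I would set up the key observation: in $G=(K_{\ell},G^{-})$, a $4$-cycle on vertices $\{a,b,c,d\}$ with some of its edges negative is balanced precisely when it has an even number of negative edges among the four edges it uses, and a chord (say $\{a,c\}$) is a \emph{balanced} chord when it splits the $4$-cycle into two balanced triangles. Since both the positive and the negative edge $\{a,c\}$ are available (positive always, negative only if $\{a,c\}\in E_{G}^{-}$), the existence of \emph{some} balanced chord is a purely combinatorial condition on which of the six edges among $\{a,b,c,d\}$ are negative.

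For the direction "$G^{-}$ threshold $\Rightarrow$ $G$ balanced chordal", I would argue by induction on $\ell$ using the recursive construction of threshold graphs. Adding an isolated vertex to $G^{-}$ corresponds to adding a vertex $v$ to $G$ whose incident edges are all positive; such a $v$ is signed simplicial (conditions (1)–(3) of the definition hold trivially since $v$ has no negative edges and no loop, and any two positive neighbors of $v$ are joined by a positive edge in $K_{\ell}$), so by Proposition \ref{add ss}(\ref{add ss 4}) balanced chordality of $G$ is equivalent to that of $G\setminus\{v\}$. Adding a dominating vertex to $G^{-}$ corresponds to adding a vertex $v$ joined to every other vertex by a negative edge; I would check that this $v$ is again signed simplicial — condition (1) holds because any two negative neighbors of $v$ are joined by a positive edge in $K_{\ell}$, condition (2) is vacuous since $v$ has no positive incident edge (in the subgraph where $v$ is the last added vertex all of $v$'s edges are negative), and condition (3) is vacuous as there are no loops — and again invoke Proposition \ref{add ss}(\ref{add ss 4}). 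The base case $\ell=1$ is trivial. This reduces the whole "if" direction to the observation that $K_1$ (as a signed graph with no negative edges) is balanced chordal, which is immediate.

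For the converse "$G$ balanced chordal $\Rightarrow$ $G^{-}$ threshold", I would prove the contrapositive: if $G^{-}$ is not threshold, then by Theorem \ref{Golumbic} it contains an induced $2K_{2}$, $C_{4}$, or $P_{4}$ on some four vertices $\{a,b,c,d\}$. In each of the three cases I would exhibit an explicit balanced $4$-cycle in $G=(K_{\ell},G^{-})$ on these four vertices that has no balanced chord. For the induced $P_{4}$ with negative edges $\{a,b\},\{b,c\},\{c,d\}$ and the remaining three pairs positive in $G^{+}$ (and non-negative, since the subgraph is \emph{induced} in $G^{-}$), consider the $4$-cycle $a\,b\,c\,d\,a$ using the three negative edges and the positive edge $\{d,a\}$: it has three negative edges, so it is unbalanced — so instead I would take the cycle $a\,b\,c\,d\,a$ with edges $\{a,b\}^{-},\{b,c\}^{-},\{c,d\}^{-}$ replaced appropriately, or more carefully pick the balanced $4$-cycle whose negative-edge count is even; the right choice for $P_{4}$ is to use $\{a,b\}^{-}$, $\{b,c\}^{-}$, and then positive $\{c,d\}$, positive $\{d,a\}$, giving exactly two negative edges, hence balanced. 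Its two potential chords are $\{a,c\}$ and $\{b,d\}$; since the $P_{4}$ is induced these pairs carry only positive edges, and one checks the triangles they create are unbalanced (the triangle $a\,b\,c$ has the two negative edges $\{a,b\},\{b,c\}$ plus positive $\{a,c\}$ — that is balanced — so I must instead verify the \emph{other} triangle; this is the delicate bookkeeping step). Analogous explicit cycles work for $C_{4}$ (take the four negative edges: zero or— wait, $C_4$ in $G^-$ has four negative edges, which is even, hence balanced, with both chords positive forming triangles with two negative edges each, which are balanced — so here one must use the \emph{non-induced} structure differently) and for $2K_{2}$.

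The main obstacle I anticipate is exactly this last paragraph: getting the parity bookkeeping right so that in each of the three forbidden cases one produces a genuinely balanced $4$-cycle \emph{both} of whose chords fail to be balanced chords. The cleanest route is probably to apply a switching function first (switching preserves balance of cycles and the property $G^{+}=K_{\ell}$, and permutes which edges are negative) to normalize the configuration — e.g. switch so that the forbidden induced subgraph of $G^{-}$ becomes a configuration where the balanced $4$-cycle uses only positive edges, which by Proposition \ref{G+ is a localization} would then directly contradict chordality of $(G^{\nu})^{+}=K_{\ell}$... except $K_{\ell}$ is chordal, so that cannot happen; hence the real content is that the negative edges genuinely obstruct the \emph{balanced} chord even though positive chords exist. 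I would handle this by a short finite case analysis on the six edges of the four-vertex induced subgraph, organized by the switching class of the $2K_2/C_4/P_4$ pattern, and I expect three or four subcases, each a one-line parity check.
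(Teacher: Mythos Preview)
Your ``only if'' direction (contrapositive via the forbidden induced subgraphs $2K_{2},C_{4},P_{4}$) is the same idea as the paper's, though the paper avoids your bookkeeping tangle by exhibiting a \emph{single} balanced $4$-cycle that works uniformly: on vertices $v_{1},v_{2},v_{3},v_{4}$ take the cycle with negative edges $\{v_{1},v_{2}\},\{v_{3},v_{4}\}$ and positive edges $\{v_{2},v_{3}\},\{v_{4},v_{1}\}$. In each of the three forbidden configurations the two diagonals $\{v_{1},v_{3}\},\{v_{2},v_{4}\}$ are absent from $G^{-}$, so only the positive chords are available, and each splits the cycle into two triangles with exactly one negative edge (unbalanced). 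That one-line check replaces your proposed case analysis.

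Your ``if'' direction, however, has a genuine gap. When $v$ is a dominating vertex of $G^{-}$, it is \emph{not} signed simplicial in $G=(K_{\ell},G^{-},\varnothing)$. You write that condition~(2) is vacuous because ``$v$ has no positive incident edge'', but this is false: since $G^{+}=K_{\ell}$, the vertex $v$ is joined to every other vertex by \emph{both} a positive and a negative edge. Condition~(2) then demands $\{u_{1},u_{2}\}\in E_{G}^{-}$ for \emph{all} pairs $u_{1},u_{2}$, i.e.\ that $G^{-}\setminus\{v\}$ be complete, which is not true in general. Worse, condition~(3) fails outright: for every $u\neq v$ we have $\{u,v\}\in E_{G}^{+}\cap E_{G}^{-}$, forcing $u\in L_{G}=\varnothing$, a contradiction. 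So the inductive step breaks whenever the threshold construction adds a dominating vertex, and your argument does not go through.

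The paper's route for this direction is direct rather than inductive. Given a balanced cycle $C$ of length $\geq 4$: if $C$ has two consecutive edges of the same sign at a vertex $v$, the positive edge between its neighbours is a balanced chord (it cuts off a balanced triangle). Otherwise the signs along $C$ alternate; pick four consecutive vertices $v_{1}v_{2}v_{3}v_{4}$ with $\{v_{1},v_{2}\},\{v_{3},v_{4}\}\in E_{G}^{-}$ and $\{v_{2},v_{3}\}\in E_{G}^{+}$. The induced subgraph $G^{-}[\{v_{1},v_{2},v_{3},v_{4}\}]$ is threshold and has no isolated vertex, hence has a dominating vertex, which forces $\{v_{1},v_{3}\}\in E_{G}^{-}$ or $\{v_{2},v_{4}\}\in E_{G}^{-}$; either is a balanced chord of $C$. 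This avoids Proposition~\ref{add ss} entirely.
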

\begin{proof}
Assume that $ G^{-} $ is not threshold.
By Theorem \ref{Golumbic}, the signed graph $ G $ has one of left three graphs in Figure \ref{Fig:bc-threshold} as an induced subgraph. 
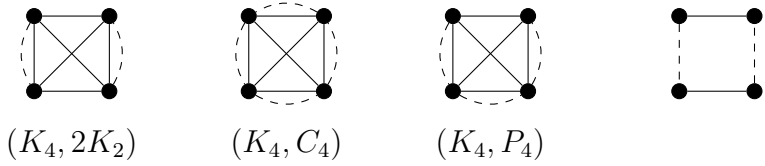
\begin{figure}[t]
\centering
\begin{tikzpicture}[baseline=0]
\draw (0,1) node[v](1){};
\draw (0,0) node[v](2){};
\draw (1,0) node[v](3){};
\draw (1,1) node[v](4){};
\draw (0.5,-0.7) node[]{$ (K_{4},2K_{2}) $};
\draw[] (1)--(2)--(3)--(4)--(1);
\draw[] (1)--(3);
\draw[] (2)--(4);
\draw[bend right,dashed] (1) to (2);
\draw[bend right,dashed] (3) to (4);
\end{tikzpicture} \qquad
\begin{tikzpicture}[baseline=0]
\draw (0,1) node[v](1){};
\draw (0,0) node[v](2){};
\draw (1,0) node[v](3){};
\draw (1,1) node[v](4){};
\draw (0.5,-0.7) node[]{$ (K_{4},C_{4}) $};
\draw[] (1)--(2)--(3)--(4)--(1);
\draw[] (1)--(3);
\draw[] (2)--(4);
\draw[bend right,dashed] (1) to (2);
\draw[bend right,dashed] (2) to (3);
\draw[bend right,dashed] (3) to (4);
\draw[bend right,dashed] (4) to (1);
\end{tikzpicture} \qquad
\begin{tikzpicture}[baseline=0]
\draw (0,1) node[v](1){};
\draw (0,0) node[v](2){};
\draw (1,0) node[v](3){};
\draw (1,1) node[v](4){};
\draw (0.5,-0.7) node[]{$ (K_{4},P_{4}) $};
\draw[] (1)--(2)--(3)--(4)--(1);
\draw[] (1)--(3);
\draw[] (2)--(4);
\draw[bend right,dashed] (1) to (2);
\draw[bend right,dashed] (2) to (3);
\draw[bend right,dashed] (3) to (4);
\end{tikzpicture}\hspace{14mm}
\begin{tikzpicture}[baseline=0]
\draw (0,1) node[v](1){};
\draw (0,0) node[v](2){};
\draw (1,0) node[v](3){};
\draw (1,1) node[v](4){};
\draw (1)--(4);
\draw (2)--(3);
\draw[dashed] (1)--(2);
\draw[dashed] (3)--(4);
\end{tikzpicture}
\caption{Forbidden induced subgraphs and their balanced cycle}\label{Fig:bc-threshold}
\end{figure} 
Each of these contains the balanced cycle of the rightmost graph in Figure \ref{Fig:bc-threshold}, 
which has no balanced chords. 
Hence $ G $ is not balanced chordal. 

Suppose that $ G^{-} $ is threshold. 
Take a balanced cycle $ C $ of length at least $ 4 $. 
If $ C $ contains edges $ \{u,v\},\{v,w\} \in E_{G}^{+} $ or $ \{u,v\},\{v,w\} \in E_{G}^{-} $, then there is a balanced chord $ \{u,w\} \in E_{G}^{+} $ since $ G^{+}=K_{\ell} $ is complete. 
Therefore we may assume that $ C $ is formed by alternating signed edges. 
Let $ \{v_{1},v_{2}\}, \{v_{2},v_{3}\}, \{v_{3},v_{4}\} $ be consecutive edges of $ C $, where $ \{v_{1},v_{2}\}, \{v_{3},v_{4}\} \in E_{G}^{-} $ and $ \{v_{2},v_{3}\} \in E_{G}^{+} $. 
Put $ F=G[\{v_{1},v_{2},v_{3},v_{4}\}] $. 
Then $ F^{-} $ is threshold since $ G^{-} $ is threshold. 
There is no isolated vertices in $ F^{-} $. 
Hence $ F^{-} $ has a dominating vertex. 
Therefore $ \{v_{1}, v_{3}\} \in E_{G}^{-} $ or $ \{v_{2},v_{4}\} \in E_{G}^{-} $, which is a balanced chord of $ C $. 
Thus $ G $ is balanced chordal. 
\end{proof}

The following proposition is a translation of a result of Edelman and Reiner using terminologies from signed graphs. 
\begin{proposition}[Edelman-Reiner {\cite[Lemma 4.7]{edelman1994free}}]\label{ER SEO}
Let $ G=(K_{\ell},G^{-},L_{G}) $ be a signed graph with $ G^{-} $ threshold. 
Suppose that $ L_{G} $ is an initial segment of some degree order $ (v_{1}, \dots, v_{\ell}) $ of $ G^{-} $ and that $ L_{G} $ contains at least one endvertex from every edge of $ G^{-} $. 
Then $ (v_{1}, \dots, v_{\ell}) $ is a signed elimination ordering. 
\end{proposition}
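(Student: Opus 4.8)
The plan is to show directly that each vertex $v_k$ is signed simplicial in the induced subgraph $G[\{v_k, v_{k+1}, \dots, v_\ell\}]$, working through the degree ordering from the last vertex backwards; equivalently, that $(v_1, \dots, v_\ell)$ is a signed elimination ordering. Fix $k$ and set $G_k \coloneqq G[\{v_k, \dots, v_\ell\}]$. Since $G^+ = K_\ell$, the positive part $G_k^+$ is a complete graph, so conditions (1) and (2) in the definition of a signed-simplicial vertex only impose constraints coming from the negative edges: whenever $v_k$ has two negative neighbours $u_1, u_2$ among $v_{k+1}, \dots, v_\ell$ we need $\{u_1,u_2\} \in E_G^+$, which is automatic; whenever $v_k$ has a positive neighbour $u_1$ and a negative neighbour $u_2$ we need $\{u_1,u_2\} \in E_G^-$; and condition (3) must be checked against $L_G$. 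So the content is really about how negative edges and loops interact with the degree ordering of the threshold graph $G^-$.

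First I would recall the structural consequence of $G^-$ being threshold together with $(v_1,\dots,v_\ell)$ being a degree ordering: by Theorem~\ref{Golumbic} (or directly from the recursive construction), in such an ordering each $v_k$ is, inside $G^-[\{v_k,\dots,v_\ell\}]$, either isolated or a dominating vertex; in particular its negative neighbourhood within $\{v_{k+1},\dots,v_\ell\}$ is an initial segment of $(v_{k+1},\dots,v_\ell)$, and the negative neighbourhoods are nested along the ordering. I would use this to dispatch condition (2): if $v_k$ has a negative neighbour $u_2 \in \{v_{k+1},\dots,v_\ell\}$, then $v_k$ is dominating in $G_k^-$, so $v_k$ is negatively adjacent to \emph{every} later vertex; hence there is no later vertex $u_1$ that is a positive-only neighbour of $v_k$, and condition (2) is vacuous. (Here I use that $G^+ \supseteq G^-$ is not needed, but that $G^+ = K_\ell$ forces every pair to be positively adjacent, so "positive neighbour" is unrestrictive; the real point is that $v_k$ being dominating in $G_k^-$ leaves no room for the mixed case.) Condition (1) is automatic as noted.

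Next comes condition (3), which is where the loop set $L_G$ enters and which I expect to be the main obstacle. Two sub-cases: first, if $\{u, v_k\}$ is simultaneously a positive and a negative edge (a $2$-cycle), then since $G^- \subseteq G^+ = K_\ell$ this just means $\{u,v_k\} \in E_G^-$, i.e. $u$ is a negative neighbour of $v_k$; by the hypothesis that $L_G$ contains at least one endvertex of every edge of $G^-$ and is an initial segment $\{v_1,\dots,v_m\}$ of the degree ordering, one of $u, v_k$ lies in $L_G$, and I must show it forces $u \in L_G$ — this should follow because $v_k$ dominating in $G_k^-$ and the initial-segment structure of $L_G$ together pin down that the lower-indexed endpoint is in $L_G$, and within $G_k$ the vertex $v_k$ is the lowest-indexed, so if $v_k \in L_G$ then so are all earlier vertices, but I need it for the \emph{neighbour}; I would argue instead that since $L_G$ meets every negative edge and is downward closed in the ordering, and $v_k$'s negative neighbourhood in $G_k$ is all of $\{v_{k+1},\dots,v_\ell\}$ (when nonempty) this forces a comparison of degrees that places $v_k \in L_G \Rightarrow$ contradiction unless the neighbour is also in $L_G$. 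Second sub-case: if $\{u,v_k\}$ is an edge (positive or negative) and $v_k \in L_G$, I must show $u \in L_G$; again using that $L_G$ is an initial segment of a degree ordering of $G^-$ and that loops do not affect that ordering, $v_k \in L_G$ with $v_k$ of relatively low degree-position forces all vertices of at-least-as-high degree, in particular all of $v_k$'s negative neighbours and — via $G^+$ completeness is irrelevant here — I then need the positive-neighbour version, which requires invoking the "$L_G$ meets every negative edge" hypothesis again to convert positive-edge information into membership. I would organize this last part carefully by first reducing, via the degree-ordering property, to: $v_k \in L_G$ implies every vertex $u$ with $\deg_{G^-}(u) \ge \deg_{G^-}(v_k)$ lies in $L_G$, and then checking that each relevant neighbour $u$ of $v_k$ in $G_k$ indeed satisfies this degree inequality. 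The delicate bookkeeping is exactly matching Edelman–Reiner's original argument to the signed-graph language, so I would cite \cite[Lemma 4.7]{edelman1994free} for the combinatorial core and only supply the translation.
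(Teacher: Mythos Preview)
Your proposal contains a genuine error at the very first step: you have the direction of the signed elimination ordering reversed. The paper's definition requires $v_{k}$ to be signed simplicial in $G[\{v_{1},\dots,v_{k}\}]$, not in $G[\{v_{k},\dots,v_{\ell}\}]$. This matters because in a degree ordering $\deg_{G^{-}}(v_{1})\ge\cdots\ge\deg_{G^{-}}(v_{\ell})$, it is the vertex of \emph{minimal} $G^{-}$-degree that must be signed simplicial in the full graph (cf.\ Corollary~\ref{deg minimal ss}), and one then peels off vertices from the low-degree end. In your set-up $v_{k}$ is the \emph{highest}-degree vertex of $G_{k}$, and both conditions~(2) and~(3) can fail there. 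For a concrete witness, take $G^{-}$ on $\{1,2,3,4\}$ with edge set $\{\{1,2\},\{1,3\},\{1,4\},\{3,4\}\}$ (threshold), degree ordering $(1,3,4,2)$, and $L_{G}=\{1\}$. Your $G_{2}=G[\{3,4,2\}]$ would need $v_{2}=3$ to be signed simplicial, but $\{2,3\}\in E_{G}^{+}$, $\{4,3\}\in E_{G}^{-}$, and $\{2,4\}\notin E_{G}^{-}$, so condition~(2) fails; likewise $v_{1}=1\in L_{G}$ is positively adjacent to $2\notin L_{G}$, violating condition~(3) in $G_{1}=G$.

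Once the direction is corrected the argument becomes short. In $G[\{v_{1},\dots,v_{k}\}]$ the vertex $v_{k}$ has smallest $G^{-}$-degree, so by the threshold (nested-neighbourhood) property, for any $u_{1}\in\{v_{1},\dots,v_{k-1}\}$ and any $G^{-}$-neighbour $u_{2}$ of $v_{k}$ one has $\{u_{1},u_{2}\}\in E_{G}^{-}$ (since $\deg_{G^{-}}(u_{1})\ge\deg_{G^{-}}(v_{k})$); this gives condition~(2), while condition~(1) is automatic from $G^{+}=K_{\ell}$. For condition~(3): if $v_{k}\in L_{G}$ then, $L_{G}$ being an initial segment, every $u\in\{v_{1},\dots,v_{k-1}\}$ lies in $L_{G}$; and if $\{u,v_{k}\}\in E_{G}^{-}$ then the hypothesis that $L_{G}$ meets every negative edge together with the initial-segment property forces $u\in L_{G}$. (The paper itself does not give a proof, citing Edelman--Reiner directly; the above is the intended translation into the signed-graph language.)
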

\begin{corollary}\label{deg minimal ss}
Let $ G=(K_{\ell},G^{-},V_{G}) $ with $ G^{-} $ threshold. 
Then a vertex $ v $ such that $ \deg_{G^{-}}(v) $ is minimal is signed simplicial. 
In particular, every vertex of $ G $ is divisional. 
\end{corollary}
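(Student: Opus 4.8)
The plan is to deduce the statement directly from Proposition~\ref{ER SEO} together with Proposition~\ref{ss => divisional}. The crucial point is that for $G = (K_\ell, G^-, V_G)$ the loop set is all of $V_G$, which makes the two extra hypotheses of Proposition~\ref{ER SEO} automatic: $V_G$ is an initial segment of every ordering of the vertices (take the full segment $k=\ell$), and $V_G$ contains both endvertices of every edge of $G^-$, in particular at least one.

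First I would fix a vertex $v$ with $\deg_{G^-}(v)$ minimal and realize it as the last vertex of a degree ordering: put $v$ last and list the remaining vertices in non-increasing $G^-$-degree, which is a legitimate degree ordering $(v_1,\dots,v_\ell)$ with $v_\ell = v$ precisely because $\deg_{G^-}(v_{\ell-1}) \ge \deg_{G^-}(v)$. Then $L_G = V_G = \{v_1,\dots,v_\ell\}$ is an initial segment of this ordering and meets every edge of $G^-$, so Proposition~\ref{ER SEO} gives that $(v_1,\dots,v_\ell)$ is a signed elimination ordering. By the definition of a signed elimination ordering, $v_\ell = v$ is signed simplicial in $G[\{v_1,\dots,v_\ell\}] = G$, which is the first assertion.

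For the ``in particular'' part, I would assume $\ell \ge 2$ (when $\ell = 1$ there are no edges, hence nothing to prove). Because $G^+ = K_\ell$, the signed-simplicial vertex $v$ is adjacent in $G$ to every other vertex, so each vertex of $G$ is either $v$ itself or a neighbor of $v$. Proposition~\ref{ss => divisional} says a signed-simplicial vertex together with all of its neighbors are divisional, so every vertex of $G$ is divisional.

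I do not expect a real obstacle here; the only thing requiring a moment's care is recognizing that ``$L_G = V_G$'' simultaneously trivializes the initial-segment condition and the endvertex-covering condition in Proposition~\ref{ER SEO}, and that any minimal-degree vertex occurs as the last vertex of some degree ordering. As a self-contained alternative one could check the three defining conditions of a signed-simplicial vertex by hand: conditions (1) and (3) hold automatically since $G^+ = K_\ell$ and $L_G = V_G$, while condition (2) is equivalent to the claim that every $G^-$-neighbor of a minimal-degree vertex is a dominating vertex of $G^-$, which follows from Theorem~\ref{Golumbic} by exhibiting an induced $2K_2$, $C_4$, or $P_4$ whenever such a neighbor fails to be dominating.
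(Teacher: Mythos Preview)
Your proposal is correct and follows exactly the paper's approach: the paper's proof is the single sentence ``This follows by Propositions~\ref{ER SEO} and~\ref{ss => divisional},'' and you have simply unpacked those two citations with the observations that $L_G=V_G$ trivially satisfies the hypotheses of Proposition~\ref{ER SEO} and that a minimal-degree vertex can sit last in a degree ordering, then used $G^{+}=K_\ell$ so that the signed-simplicial vertex is adjacent to everyone.
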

\begin{proof}
This follows by Propositions \ref{ER SEO} and \ref{ss => divisional}. 
\end{proof}

When $ G^{+} $ is complete and every vertex has a loop, the situation is almost as simple as in the case of simple graphs. 
\begin{proposition}\label{G+ comp and full loops}
Let $ G=(K_{\ell},G^{-},V_{G}) $. 
The following are equivalent. 
\begin{enumerate}[(1)]
\item $ G $ is balanced chordal. 
\item $ G^{-} $ is threshold. 
\item $ G $ has a signed elimination ordering. 
\item $ \mathcal{A}(G) $ is supersolvable. 
\item $ \mathcal{A}(G) $ is free.  
\end{enumerate}
\end{proposition}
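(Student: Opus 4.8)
The plan is to prove all five equivalences by exhibiting a single cycle of implications, drawing almost entirely on results already assembled in the earlier sections. The pivotal observation is that, for a signed graph of the special shape $G=(K_\ell,G^-,V_G)$, each of the five conditions can be pinned to one combinatorial property of the underlying ordinary graph $G^-$: being threshold. So the strategy is to route every condition through condition (2).

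First I would note that balanced chordality does not depend on the loop set, so $(K_\ell,G^-,V_G)$ is balanced chordal if and only if $(K_\ell,G^-)$ is; Proposition \ref{bc threshold} then delivers (1)$\Leftrightarrow$(2). Next, assuming (2), a degree ordering $(v_1,\dots,v_\ell)$ of the threshold graph $G^-$ meets the hypotheses of Proposition \ref{ER SEO} in the case $L_G=V_G$ (the full vertex set is trivially an initial segment of the degree ordering and trivially contains an endvertex of every edge), hence it is a signed elimination ordering; this gives (2)$\Rightarrow$(3). Condition (3) is exactly the first alternative in Zaslavsky's supersolvability criterion, Theorem \ref{Zaslavsky SS}, so (3)$\Rightarrow$(4); and (4)$\Rightarrow$(5) is Jambu--Terao, Theorem \ref{JT SS=>free}. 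Finally, (5)$\Rightarrow$(2) is the ``only if'' direction of the Edelman--Reiner criterion, Theorem \ref{ER free}, specialized to $L_G=V_G$, since then the condition ``$L_G$ is an initial segment of some degree ordering of $G^-$'' is automatic. The implications (2)$\Rightarrow$(3)$\Rightarrow$(4)$\Rightarrow$(5)$\Rightarrow$(2) together with (1)$\Leftrightarrow$(2) close the loop.

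Because every link in this chain is a direct invocation of a previously cited theorem, I do not expect a substantive obstacle. The only places deserving a sentence of care are the reduction from the loop-free statement of Proposition \ref{bc threshold} to the present $L_G=V_G$ situation (immediate, since balanced chordality ignores loops) and the routine verification that $L_G=V_G$ satisfies the initial-segment and edge-covering hypotheses of Proposition \ref{ER SEO} and Theorem \ref{ER free}. I would therefore present the proof simply as the cycle of implications above, with these two remarks inserted where relevant.
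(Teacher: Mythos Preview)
Your proposal is correct and essentially mirrors the paper's proof: both establish $(1)\Leftrightarrow(2)$ via Proposition~\ref{bc threshold}, then run the chain $(2)\Rightarrow(3)\Rightarrow(4)\Rightarrow(5)$ using Proposition~\ref{ER SEO}, Theorem~\ref{Zaslavsky SS}, and Theorem~\ref{JT SS=>free}. The only difference is in closing the cycle: the paper uses Lemma~\ref{free => balanced chordal} to get $(5)\Rightarrow(1)$ directly, whereas you invoke the Edelman--Reiner criterion (Theorem~\ref{ER free}) for $(5)\Rightarrow(2)$; both are one-line citations, so this is a cosmetic variation rather than a different argument.
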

\begin{proof}
$ (1) \Leftrightarrow (2) $ By Proposition \ref{bc threshold}.
 
$ (2) \Rightarrow (3) $ By Proposition \ref{ER SEO}. 

$ (3) \Rightarrow (4) $ By Theorem \ref{Zaslavsky SS}. 

$ (4) \Rightarrow (5) $ By Theorem \ref{JT SS=>free}.  

$ (5) \Rightarrow (1) $ By Lemma \ref{free => balanced chordal}.  
\end{proof}

\subsection{Lemmas}
In this subsection, we assume that a signed graph $ G $ satisfies the following conditions: 
\begin{itemize}
\item $ G^{+} \supseteq G^{-} $. 
\item $ G^{+} $ is non-complete. 
\item $ G $ is balanced chordal, and hence $ G^{+} $ is chordal. 
\item $ L_{G}=V_{G} $, that is, every vertex admits a loop. 
\end{itemize}

\begin{lemma}\label{negative chord}
Take a minimal vertex separator $ S $ of $ G^{+} $ and distinct vertices $ u,v \in S $. 
Let $ A,B $ be vertex sets of distinct connected components of $ G \setminus S $. 
Assume that there exists a cycle $ C $ satisfying the following properties:
\begin{enumerate}[(i)]
\item \label{negative chord 1} $ V_{C} \cap S = \{u,v\} $. 
\item \label{negative chord 2} $ C $ has exactly two negative edges $ e,e^{\prime} $. 
\item \label{negative chord 3} One of the endvertices of $ e,e^{\prime} $ belongs to $ A,B $, respectively. 
\end{enumerate}
Then we have $ \{u,v\} \in E_{G}^{-} $. 
\end{lemma}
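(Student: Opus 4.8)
The plan is to argue by contradiction: suppose $\{u,v\} \notin E_G^-$. Since $G^+ \supseteq G^-$ is chordal and $S$ is a minimal vertex separator, Dirac's theorem (Theorem \ref{Dirac minmal vertex separator}) tells us $S$ is a clique of $G^+$, so $\{u,v\} \in E_G^+$. Thus the edge $\{u,v\}$ exists as a positive edge but not a negative edge. The cycle $C$ passes through $S$ exactly at $u$ and $v$, so $C$ is split by $u,v$ into two arcs: one arc $P_A$ running through the component $A$, and one arc $P_B$ running through the component $B$ (here I use that the two connected components of $G\setminus S$ containing the interior vertices of the two arcs must be $A$ and $B$, which follows from condition (\ref{negative chord 3}) together with the fact that $C$ only meets $S$ at $\{u,v\}$). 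By condition (\ref{negative chord 2}), the two negative edges $e, e'$ of $C$ lie one on each arc — this needs a short check: if both negative edges lay on the same arc, the other arc would be all-positive, and joining it with the positive edge $\{u,v\}$ would give an all-positive cycle, hence a balanced cycle, whose length I must control; more robustly, I count the parity of negative edges on each arc directly.

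Now the key step: form the balanced cycle $C' \coloneqq P_A \cup \{u,v\}$ (positive edge) and $C'' \coloneqq P_B \cup \{u,v\}$ (positive edge). Each of these cycles has exactly one negative edge (namely $e$ on one side, $e'$ on the other), hence each is unbalanced. Consider instead the cycle $\widetilde{C}$ obtained from $C$ by the following: actually the cleanest route is to observe that $C$ itself, having exactly two negative edges, is balanced; it has length at least $4$ (it meets two distinct components $A,B$, so it has at least one interior vertex on each side, forcing length $\geq 4$). Since $G$ is balanced chordal, $C$ has a balanced chord $\{a,b\}$, i.e. an edge of $G$ joining two non-consecutive vertices of $C$ that splits $C$ into two balanced cycles. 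I will analyze where $a,b$ can lie. If both $a,b$ lie on the same arc, say on $P_A$, then one of the two resulting subcycles consists of a sub-arc of $P_B$ together with pieces of $P_A$ and contains both $u$ and $v$ — I track which of $e,e'$ lands in which piece and derive that one piece has an odd number of negative edges, contradicting that the chord is balanced. The remaining case is $a \in V(P_A) \setminus \{u,v\}$ and $b \in V(P_B)\setminus\{u,v\}$ (or one of them equals $u$ or $v$): but then $\{a,b\}$ would be an edge of $G$ connecting the component $A$ (or $S$) to the component $B$, which is impossible since $S$ separates $A$ from $B$ in $G^+ \supseteq G^-$ and $a,b$ are not both in $S$. (If one endpoint is in $\{u,v\}\subseteq S$, the other is strictly inside one component, so the chord connects a vertex of $\{u,v\}$ to an interior vertex of a component, which does not connect the two components and hence — after re-examining which subcycle contains the relevant negative edges — again yields an unbalanced piece.)

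The main obstacle I anticipate is the bookkeeping in the chord-case analysis: there are several sub-cases depending on whether each endpoint of the balanced chord lies on $P_A$, on $P_B$, or equals $u$ or $v$, and in each sub-case I must correctly identify the two subcycles produced and count negative edges modulo $2$ to contradict the defining property of a balanced chord (each subcycle must be balanced). The cleanest organizing principle is: since $C$ has its two negative edges $e\in P_A$, $e' \in P_B$, any chord whose two endpoints lie within a single arc produces a subcycle missing one of $e,e'$ and containing the other (or containing both, or neither), and I show at least one subcycle contains an odd number of the set $\{e,e'\}$; a chord with endpoints on different arcs is ruled out geometrically by the separator property. Carrying this out carefully, combined with the initial reduction via Dirac's theorem, completes the contradiction and forces $\{u,v\} \in E_G^-$.
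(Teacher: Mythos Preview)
Your overall strategy matches the paper's, but the case analysis for chords lying within a single arc has a genuine gap. You claim that when a balanced chord $\{a,b\}$ has both endpoints on $P_A$, tracking which of $e,e'$ lands in each subcycle forces one subcycle to have an odd number of negative edges. This is false: you have forgotten to count the chord itself. Concretely, suppose the chord separates $e$ from $e'$ (one in each subcycle). If the chord is negative, each subcycle then carries exactly two negative edges and both are balanced---no contradiction. Likewise, if the chord lies entirely on the $e$-free side of $P_A$ (so one subcycle contains neither $e$ nor $e'$ and the other contains both), a positive chord again makes both subcycles balanced. In either scenario the chord really is a balanced chord and your parity argument does not rule it out.

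The paper closes this gap with a single extra move: pass at the outset to a cycle $C$ of \emph{minimal length} satisfying (i)--(iii). Then a balanced chord inside $P_A$ (with at least one endpoint in $A$) produces a strictly shorter cycle---namely the subcycle containing $u,v$ and the entire $B$-arc---which one checks still satisfies (i)--(iii): it meets $S$ only in $\{u,v\}$, it is balanced with exactly two negative edges (either $e$ and $e'$, or the chord and $e'$, depending on the case), and these have endpoints in $A$ and $B$ respectively. This contradicts minimality. The same holds on the $B$-side, and chords joining $A$ to $B$ are excluded by the separator property. Hence the only possible balanced chord is $\{u,v\}$, and since the positive edge $\{u,v\}$ yields two unbalanced subcycles, the negative edge $\{u,v\}$ must be present. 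Your argument becomes correct once you insert this minimality step in place of the attempted direct parity contradiction.
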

\begin{proof}
Assume that $ C $ is a cycle of minimal length satisfying the conditions. 
Clearly $ C $ is a balanced cycle of length at least four. 
Hence $ C $ must have a balanced chord. 
The minimality of $C$ implies $ V_{C} \subseteq A \cup S \cup B $ since $ S $ is a clique of $ G^{+} $ by Theorem \ref{Dirac minmal vertex separator}. 
There are no edges between a vertex in $ A $ and a vertex in $ B $ since $ S $ is a minimal vertex separator of $ G^{+} $ and $ G^{+} \supseteq G^{-} $. 
The minimality of $ C $ implies that $ C $ has no balanced chords between a vertex in $ A $ and another vertex in $ A \cup S $. 
Similarly, $ C $ has no balanced chords between a vertex in $ B $ and a vertex in $ S \cup B $. 
Hence only the edge $ \{u,v\} $ can be a balanced chord. 
The positive edge $ \{u,v\} \in E_{G}^{+} $ is not a balanced chord of $ C $. 
Therefore we have $ \{u,v\} \in E_{G}^{-} $. 
\end{proof}

Let $ \mathcal{G} $ be the clique-separator graph of $ G^{+} $ 
(See Subsection \ref{subsec:clique-separator graph} for results about clique-separator graphs). 
Every sink box has at least two clique nodes since $ G^{+} $ is non-complete. 
Let $ P $ be a path in a sink box from a clique node to another clique node. 
We write $ P $ as follows. 
\begin{figure}[H]
\centering
\begin{tikzpicture}
\draw (0,0) node[](C1){$ C_{1} $}; 
\draw (1,0) node[](S2){$ S_{2} $};
\draw (2,0) node[](C2){$ C_{2} $};
\draw (4.6,0) node[](Sk){$ S_{k} $};
\draw (5.6,0) node[](Ck){$ C_{k} $};
\draw (C1)--(S2)--(C2)--(2.8,0);
\draw[dotted] (2.8,0)--(3.8,0);
\draw (3.8,0)--(Sk)--(Ck); 
\end{tikzpicture}
\end{figure}
Take vertices $ a \in C_{1} \setminus S_{2} $ and $ b \in C_{k}\setminus S_{k} $. 
By Corollary \ref{Ibarra 1 cor}, the set of $ (a,b) $-minimal separators of $ G^{+} $ coincides with $ \{S_{2}, \dots, S_{k}\} $. 
Let $ S_{i} $ be a minimal $ (a,b) $-separator whose cardinality is minimal. 
For $v \in V_{G}$, let $ N_{G^{-}[A]}(v) $ denote the set of vertices in a subset $ A \subseteq V_{G} $ connected to $ v $ by an negative edge. 
\begin{lemma}\label{separator comp signed}
Suppose that $ N_{G^{-}[C_{1}]}(a) \not \subseteq S_{i} $ and $ N_{G^{-}[C_{k}]}(b) \not \subseteq S_{i} $. 
Then $ G[S_{i}] $ is a complete signed graph with loops. 
\end{lemma}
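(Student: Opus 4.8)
The statement packages three assertions about $G[S_{i}]$: that $G^{+}[S_{i}]$ is a complete graph, that every pair of distinct vertices of $S_{i}$ is joined by a negative edge, and that $S_{i}\subseteq L_{G}$. The last is immediate from our standing assumption $L_{G}=V_{G}$. For the first, recall that $S_{i}$ is a minimal $(a,b)$-separator of $G^{+}$, hence a minimal vertex separator; since $G^{+}$ is chordal, Theorem \ref{Dirac minmal vertex separator} (Dirac) says it is a clique, so $\{u,v\}\in E_{G}^{+}$ for all distinct $u,v\in S_{i}$. Thus the whole content is the middle assertion, and the plan is to deduce it from Lemma \ref{negative chord} applied with $S=S_{i}$.

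First I would fix the data. Let $A$ (resp. $B$) be the connected component of $G\setminus S_{i}$ containing $a$ (resp. $b$). Since $G^{+}\supseteq G^{-}$, the underlying simple graph of $G$ is exactly $G^{+}$, so the components of $G\setminus S_{i}$ coincide with those of $G^{+}\setminus S_{i}$; and since $C_{1}\setminus S_{i}$ and $C_{k}\setminus S_{i}$ are cliques of $G^{+}$ containing $a$ and $b$ respectively, they lie inside $A$ and $B$. By hypothesis we may choose $a'\in C_{1}\setminus S_{i}$ with $\{a,a'\}\in E_{G}^{-}$ and $b'\in C_{k}\setminus S_{i}$ with $\{b,b'\}\in E_{G}^{-}$; then $\{a,a'\}$ is a negative edge with both endpoints in $A$, and $\{b,b'\}$ is a negative edge with both endpoints in $B$. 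These will serve as the edges $e,e'$ of Lemma \ref{negative chord}.

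The heart of the argument is to build a cycle $C$ of $G$ with $V_{C}\cap S_{i}=\{u,v\}$ whose only negative edges are $\{a,a'\}$ and $\{b,b'\}$. I would construct $C$ as the union of a $u$–$v$ path running inside $A\cup\{u,v\}$ that uses the edge $\{a,a'\}$ and a $u$–$v$ path running inside $B\cup\{u,v\}$ that uses the edge $\{b,b'\}$, every other edge being positive; since $A\cap B=\varnothing$ and each path meets $S_{i}$ only in $\{u,v\}$, their union is exactly a cycle of the shape required by Lemma \ref{negative chord}, which then yields $\{u,v\}\in E_{G}^{-}$. To get the $A$-side path, note that $S_{i}$ is a minimal $(a,b)$-separator, so every vertex of $S_{i}$—in particular $u$ and $v$—has a neighbour in $A$; moreover $u$ and $v$ are adjacent in $G^{+}$ to every vertex of the nonempty clique $C_{i-1}\setminus S_{i}\subseteq A$. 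Applying Corollary \ref{Menger cor} to $G^{+}$ with the pair $a,b$ and the separator $S_{i}$—which is of minimum cardinality among $(a,b)$-separators, since by Corollary \ref{Ibarra 1 cor} the minimal $(a,b)$-separators are precisely $S_{2},\dots,S_{k}$ and $S_{i}$ was taken of least cardinality among them—produces a cycle through $a,b,u,v$ meeting $S_{i}$ only in $\{u,v\}$ and consisting of two induced paths, one through $u$ and one through $v$. Its portion inside $A$ is a $u$–$v$ path $\pi_{A}$ through $a$ lying in $A\cup\{u,v\}$. Because these paths are induced and $a'$ is adjacent in $G^{+}$ to $a$ (they lie in the common maximal clique $C_{1}$), if $a'$ already lies on $\pi_{A}$ it can only be a neighbour of $a$ there, and I would simply replace that positive edge $\{a,a'\}$ by its negative parallel; otherwise I would reroute $\pi_{A}$ near $a$ so as to pass through $a'$, using that $a'$ has its own connection to $u$ and $v$ within $A$ and the clique structure of $C_{1}$. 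The $B$-side path is obtained symmetrically from $b'$, $C_{k}$ and $C_{i}\setminus S_{i}$.

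I expect the rerouting step to be the main obstacle: guaranteeing a $u$–$v$ path through the prescribed negative edge $\{a,a'\}$ inside $A\cup\{u,v\}$ (and likewise for $\{b,b'\}$ in $B\cup\{u,v\}$) while remaining disjoint from $S_{i}\setminus\{u,v\}$. This is where one must use the clique-separator structure of $G^{+}$—the cliques $C_{1},C_{k}$ being maximal and the separators in a box forming an antichain (Theorem \ref{Ibarra2})—together with the elementary properties of minimal separators, in order to exclude degenerate configurations in which $a'$ (resp. $b'$) is screened off behind a cut vertex of $A$ (resp. $B$) and could not lie on such a cycle. Once the cycle $C$ is in hand, Lemma \ref{negative chord} gives $\{u,v\}\in E_{G}^{-}$ for every distinct $u,v\in S_{i}$; combined with $G^{+}[S_{i}]$ complete and $S_{i}\subseteq L_{G}$, this says exactly that $G[S_{i}]$ is a complete signed graph with loops.
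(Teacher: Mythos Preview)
Your approach is essentially the paper's: obtain a cycle through $a,b,u,v$ meeting $S_{i}$ only in $\{u,v\}$ via Corollary~\ref{Menger cor}, modify it near $a$ and $b$ to carry exactly two negative edges (one on each side of $S_{i}$), and invoke Lemma~\ref{negative chord}. The reductions to $G^{+}[S_{i}]$ complete (Dirac) and $S_{i}\subseteq L_{G}$ (standing hypothesis) are exactly as in the paper, and your justification that Corollary~\ref{Menger cor} applies---because $S_{i}$ has minimum cardinality among the $(a,b)$-separators $S_{2},\dots,S_{k}$ by Corollary~\ref{Ibarra 1 cor}---is correct and more explicit than the paper's.

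Where your sketch is incomplete is precisely the rerouting, and the paper's resolution is much simpler than the machinery you anticipate. You do not need antichain properties or any delicate connectivity argument inside $A$. The key observation (which the paper extracts from Theorem~\ref{Ibarra 1}) is that a neighbor $a''$ of $a$ on the Menger cycle necessarily lies in $C_{1}$. Granting this, the modification is a two-case edge swap: if $\{a,a''\}\in E_{G}^{-}$, simply change the sign of that edge in the cycle; otherwise, since $a',a''\in C_{1}$ we have $\{a',a''\}\in E_{G}^{+}$, so replace the single edge $a\,a''$ by the length-two path $a\,a'\,a''$ with $\{a,a'\}$ negative and $\{a',a''\}$ positive. (Your worry that $a'$ might already lie on the cycle is handled by inducedness exactly as you say: then $a'$ is a neighbor of $a$ on the cycle and you are in the first case.) The symmetric modification at $b$ uses $b',b''\in C_{k}$. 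This is the whole ``rerouting''; no further structural lemmas are needed. Once you make the observation $a''\in C_{1}$ (and $b''\in C_{k}$) explicit, your argument is complete and coincides with the paper's.
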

\begin{proof}
From the assumptions, there exist vertices $ a^{\prime} \in C_{1}\setminus S_{i}, b^{\prime} \in C_{k}\setminus S_{i} $ such that $ \{a,a^{\prime}\} \in E_{G}^{-} $ and $ \{b,b^{\prime}\} \in E_{G}^{-} $. 
Moreover, by our hypothesis, every vertex has a loop and $ G^{+}[S_{i}] $ is a complete simple graph from Theorem \ref{Dirac minmal vertex separator}. 
Therefore we only need to show that $ G^{-}[S_{i}] $ is also a complete simple graph. 
Take distinct vertices $ u,v \in S_{i} $. 
By Corollary \ref{Menger cor} and Theorem \ref{Ibarra 1}, we obtain a cycle $ C $ of $ G^{+}[P] $ such that $ C $ contains the vertices $ a,b,u,v $ and intersects $ S_{i} $ at $ \{u,v\} $. 
We modify the cycle $ C $ as follows (see Figure \ref{Fig:modification}). 
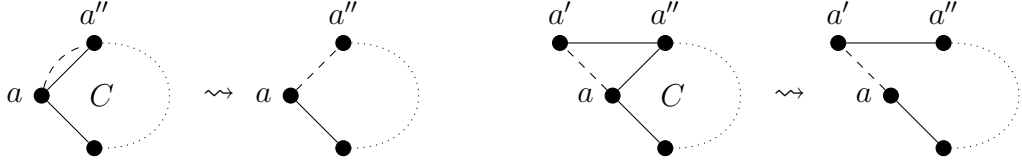
\begin{figure}[t]
\centering
\begin{tikzpicture}[baseline=-2]
\draw (0,0) node[v, label=left:{$ a $}](a){};
\draw (0.7,0.7) node[v, label=above:{$ a^{\prime\prime} $}](a2){};
\draw (0.7,-0.7) node[v](b){};
\draw (0.8,0) node(){$ C $};
\draw (b)--(a)--(a2);
\draw[bend left, dashed] (a) to (a2);
\draw[dotted] (a2) .. controls (2,0.7) and (2,-0.7) .. (b);
\end{tikzpicture}
$ \rightsquigarrow $
\begin{tikzpicture}[baseline=-2]
\draw (0,0) node[v, label=left:{$ a $}](a){};
\draw (0.7,0.7) node[v, label=above:{$ a^{\prime\prime} $}](a2){};
\draw (0.7,-0.7) node[v](b){};
\draw (b)--(a);
\draw[dashed] (a) to (a2);
\draw[dotted] (a2) .. controls (2,0.7) and (2,-0.7) .. (b);
\end{tikzpicture}
\hspace{10mm}
\begin{tikzpicture}[baseline=-2]
\draw (0,0) node[v, label=left:{$ a $}](a){};
\draw (0.7,0.7) node[v, label=above:{$ a^{\prime\prime} $}](a2){};
\draw (-0.7,0.7) node[v, label=above:{$ a^{\prime} $}](a1){};
\draw (0.7,-0.7) node[v](b){};
\draw (0.8,0) node(){$ C $};
\draw (b)--(a)--(a2)--(a1);
\draw[dashed] (a)--(a1);
\draw[dotted] (a2) .. controls (2,0.7) and (2,-0.7) .. (b);
\end{tikzpicture}
$ \rightsquigarrow $
\begin{tikzpicture}[baseline=-2]
\draw (0,0) node[v, label=left:{$ a $}](a){};
\draw (0.7,0.7) node[v, label=above:{$ a^{\prime\prime} $}](a2){};
\draw (-0.7,0.7) node[v, label=above:{$ a^{\prime} $}](a1){};
\draw (0.7,-0.7) node[v](b){};
\draw (b)--(a);
\draw (a2)--(a1);
\draw[dashed] (a)--(a1);
\draw[dotted] (a2) .. controls (2,0.7) and (2,-0.7) .. (b);
\end{tikzpicture}
\caption{The modification of a cycle}\label{Fig:modification}
\end{figure}
Let $ a^{\prime\prime} $ be a vertex adjacent to $ a $ in $ C $.
By Theorem \ref{Ibarra 1}, we have $ a^{\prime\prime} \in C_{1} $.  
If $ G $ has the negative edge $ \{a,a^{\prime\prime}\} $, then we replace the positive edge $ \{a,a^{\prime\prime}\} $ of $ C $ by the negative edge $ \{a,a^{\prime\prime}\} $. 
When $ G $ does not have the negative edge $ \{a,a^{\prime\prime}\} $, we replace the positive edge $ \{a,a^{\prime\prime}\} $ of $ C $ by the path consisting of the negative edge $ \{a,a^{\prime}\} $ and the positive edge $ \{a^{\prime},a^{\prime\prime}\} $. 
We make a similar modification with respect to the vertex $ b $. 
As a result, our cycle $ C $ has been modified so that it satisfies the assumptions in Lemma \ref{negative chord}. 
Therefore we have $ \{u,v\} \in E_{G}^{-} $ and hence $ G[S_{i}] $ is a complete signed graph with loops. 
\end{proof}

\begin{lemma}\label{dominating}
Suppose that $ N_{G^{-}[C_{1}]}(a) \subseteq S_{i} $. 
Assume that one of the following conditions holds:
\begin{enumerate}[(i)]
\item \label{dominating i} $ N_{G^{-}[C_{k}]}(b) \not\subseteq S_{i} $. 
\item \label{dominating ii} $ N_{G^{-}[C_{1}]}(a) \subseteq N_{G^{-}[C_{k}]}(b) $. 
\item \label{dominating iii} $ N_{G^{-}[C_{k}]}(b) \subseteq S_{i}, N_{G^{-}[C_{1}]}(a) \not\subseteq N_{G^{-}[C_{k}]}(b) $, and $ N_{G^{-}[C_{1}]}(a) \not\supseteq N_{G^{-}[C_{k}]}(b) $. 
\end{enumerate}
Then the following hold: 
\begin{enumerate}[(1)]
\item \label{dominating 1} Every vertex in $ N_{G^{-}[C_{1}]}(a) $ is dominating in $ G^{-}[S_{i}] $, that is, for any $ u \in N_{G^{-}[C_{1}]}(a) $ and another vertex $ v \in S_{i} $, there exists the negative edge $ \{u,v\} $. 
\item \label{dominating 2} Every vertex in $ N_{G^{-}[C_{1}]}(a) $ is dominating in $ G^{-}[S_{2}] $. 
\end{enumerate}
\end{lemma}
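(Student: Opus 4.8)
The plan is to derive both conclusions from Lemma~\ref{negative chord}. Fix $u \in N \coloneqq N_{G^{-}[C_{1}]}(a)$ and a vertex $v \in S_{i}\setminus\{u\}$; by the standing hypothesis $N\subseteq S_{i}$ we have $u\in S_{i}$. Because $G^{+}\supseteq G^{-}$, the connected components of $G\setminus S_{i}$ are exactly those of $G^{+}\setminus S_{i}$; let $A$ and $B$ be the components containing $a$ and $b$, which are distinct since $S_{i}$ separates $a$ from $b$. I will freely use the structural facts from the proof of Lemma~\ref{separator comp signed} (via Theorem~\ref{Ibarra 1}), in particular that every neighbour of $a$ lies in $C_{1}$ and every neighbour of $b$ lies in $C_{k}$. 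To prove~(\ref{dominating 1}) it is enough, for each such $u,v$, to exhibit a cycle $C$ with $V_{C}\cap S_{i}=\{u,v\}$ having exactly two negative edges, one incident to a vertex of $A$ and one incident to a vertex of $B$: Lemma~\ref{negative chord} then gives $\{u,v\}\in E_{G}^{-}$.

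I would build $C$ from two arcs from $u$ to $v$. The arc through $A$ is the negative edge $\{a,u\}$ (present since $u\in N$) followed by a path from $a$ to $v$ inside $A\cup\{v\}$ using only positive edges; such a path exists because $A$ is connected and $v$, lying in a minimal vertex separator, has a neighbour in $A$. This arc carries exactly one negative edge, incident to $a\in A$. The arc through $B$ runs from $v$ back to $u$ and must contribute exactly one further negative edge incident to a vertex of $B$; here the three hypotheses are used. In case~(\ref{dominating i}) there is $b'\in C_{k}\setminus S_{i}\subseteq B$ with $\{b,b'\}\in E_{G}^{-}$, and one routes the $B$-arc through this negative edge, a minor technical point handled as in the modification step of the proof of Lemma~\ref{separator comp signed}. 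In case~(\ref{dominating ii}) we have $u\in N\subseteq M\coloneqq N_{G^{-}[C_{k}]}(b)$, so $\{b,u\}\in E_{G}^{-}$, and the $B$-arc is a positive path from $v$ to $b$ inside $B$ followed by $\{b,u\}$. In case~(\ref{dominating iii}), if $v\in M$ (resp.\ $u\in M$) one uses $\{b,v\}$ (resp.\ $\{b,u\}$) as in the previous case; the remaining situation $u,v\notin M$ is the delicate one, and I would treat it by first observing that the previous cases already yield $\{u,m\}\in E_{G}^{-}$ for every $m\in M$ (in particular for a witness $m_{0}\in M\setminus N$, which exists since $N\not\supseteq M$), and then exploiting the inclusions $N\subseteq C_{1}\cap S_{i}\subseteq S_{2}$ and $M\subseteq C_{k}\cap S_{i}\subseteq S_{k}$ coming from the clique intersection property (Theorem~\ref{Ibarra2}(\ref{Ibarra2-1})) to assemble the required cycle. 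In every case Lemma~\ref{negative chord} yields $\{u,v\}\in E_{G}^{-}$, proving~(\ref{dominating 1}).

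For~(\ref{dominating 2}) I may assume $i\geq 3$ (if $i=2$ it is~(\ref{dominating 1})). Then $S_{2}\neq S_{i}$, and as separator nodes of the same sink box they form an antichain by Theorem~\ref{Ibarra2}(\ref{Ibarra2-2}); choose $w\in S_{i}\setminus S_{2}$. Fix $u\in N$ and $v\in S_{2}\setminus\{u\}$; if $v\in S_{i}$ we are done by~(\ref{dominating 1}), so assume $v\in S_{2}\setminus S_{i}$, whence $v\notin N$ (as $N\subseteq S_{i}$) and $v\neq w$. Apply Lemma~\ref{negative chord} with the separator $S_{2}$ and the components $A'\ni a$, $B'\ni b$ of $G\setminus S_{2}$; note $w\in B'$. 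Take the cycle whose first arc is the negative edge $\{u,a\}$ followed by the positive edge $\{a,v\}$ (both exist because $u,a,v$ lie in the clique $C_{1}$, the edge $\{u,a\}$ is negative as $u\in N$, and $\{a,v\}$ is positive only as $v\notin N$), and whose second arc is a positive path from $v$ to $w$ inside $B'$ followed by the negative edge $\{w,u\}$, which exists by~(\ref{dominating 1}) applied to $u\in N$, $w\in S_{i}$. This cycle meets $S_{2}$ exactly in $\{u,v\}$ and has exactly two negative edges, one incident to $a\in A'$ and one to $w\in B'$, so Lemma~\ref{negative chord} gives $\{u,v\}\in E_{G}^{-}$.

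The routine parts are verifying that the arcs can be taken simple and internally disjoint and that the negative edges number exactly two. The main obstacle I foresee is the last sub-case of~(\ref{dominating iii}): there, unlike in the other cases, neither $b$ nor any negative neighbour of $b$ can sit on a cycle meeting $S_{i}$ only in $\{u,v\}$, so the cycle has to be built from the auxiliary negative edges $\{u,m\}$ ($m\in M$) together with the positional information $N\subseteq S_{2}$, $M\subseteq S_{k}$; pinning this down --- along with the routing-through-a-prescribed-edge step of case~(\ref{dominating i}) --- is where the real work lies.
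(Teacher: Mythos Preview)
Your treatment of cases~(\ref{dominating i}) and~(\ref{dominating ii}), and of the easy sub-cases of~(\ref{dominating iii}) (when $u\in M$ or $v\in M$), matches the paper: build a Menger cycle through $a,b,u,v$, swap in the available negative edges near $a$ and $b$, and invoke Lemma~\ref{negative chord}.

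The gap is in the remaining sub-case of~(\ref{dominating iii}), where $u\in N\setminus M$ and the target vertex $v\in S_{i}$ lies outside $M$. You propose to still apply Lemma~\ref{negative chord} with the separator $S_{i}$, using the already-established negative edges $\{u,m\}$ ($m\in M$). But Lemma~\ref{negative chord} requires the cycle to meet $S_{i}$ in exactly $\{u,v\}$ \emph{and} to carry a negative edge with one end in the $b$-component $B$. In case~(\ref{dominating iii}) every negative neighbour of $b$ in $C_{k}$ lies in $M\subseteq S_{i}$, and the auxiliary edges $\{u,m\}$ have both ends in $S_{i}$; so any $B$-side negative edge you can produce forces a third $S_{i}$-vertex onto the cycle. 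The inclusions $N\subseteq S_{2}$, $M\subseteq S_{k}$ do not help: Lemma~\ref{negative chord} for a different separator $S$ would only yield an edge between two vertices of that $S$, not between the given $u,v\in S_{i}$. The paper handles this sub-case differently: it fixes one $v_{0}\in M\setminus N$, gets $\{u,v_{0}\}\in E_{G}^{-}$ via Lemma~\ref{negative chord}, and for an arbitrary $w\in S_{i}\setminus\{u,v_{0}\}$ builds the balanced cycle obtained by concatenating an induced $a$--$b$ path through $w$ with the $3$-path $b\,v_{0}\,u\,a$ (negative, positive, negative). This cycle meets $S_{i}$ in three vertices, so Lemma~\ref{negative chord} is not used; instead one argues directly from balanced chordality, shrinking the cycle until the only surviving chord candidate is the negative edge $\{u,w\}$.

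Your argument for~(\ref{dominating 2}) is different from the paper's and, as far as I can see, correct. The paper proceeds by induction on $k$: it picks $b'\in S_{i}\setminus S_{i-1}$, observes via~(\ref{dominating 1}) that $N\subseteq N_{G^{-}[C_{i-1}]}(b')$, and replays the lemma on the shorter path $C_{1}\cdots C_{i-1}$ with $(a,b')$ in place of $(a,b)$ under hypothesis~(\ref{dominating ii}). Your direct route---take $w\in S_{i}\setminus S_{2}$, form the cycle $u\,a\,v\,\cdots\,w\,u$ with the negative edges $\{u,a\}$ and $\{w,u\}$ (the latter from~(\ref{dominating 1})), and apply Lemma~\ref{negative chord} at $S_{2}$---avoids the induction entirely. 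It is shorter, but it relies on having already established~(\ref{dominating 1}) for the particular $S_{i}$ chosen, whereas the paper's induction keeps the two conclusions more loosely coupled.
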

\begin{proof}
(\ref{dominating 1}) 
Take vertices $ u \in N_{G^{-}[C_{1}]}(a) $ and $ v \in S_{i} \setminus \{u\} $. 
We will show that there exists a negative edge $ \{u,v\} $. 

First, we assume (\ref{dominating i}). 
Then there exists a vertex $ b^{\prime} \in C_{k}\setminus S_{i} $ such that $ \{b,b^{\prime}\} \in E_{G}^{-} $. 
By Corollary \ref{Menger cor}, we obtain a cycle $ C $ of $ G^{+} $ containing $ a,b,u,v $ and intersecting $ S_{i} $ at $ \{u,v\} $. 
Replace the positive edge $ \{a,u\} $ of $ C $ by the negative edge $ \{a,u\} $ and make a modification with respect to $ b $ as Figure \ref{Fig:modification}. 
Then our modified cycle satisfies the conditions in Lemma \ref{negative chord} and hence we have $ \{u,v\} \in E_{G}^{-} $. 

Second, assume (\ref{dominating ii}). 
Note that $ u \in N_{G^{-}[C_{1}]}(a) $ implies $ \{a,u\}, \{u,b\} \in E_{G}^{-} $. 
By Corollary \ref{Menger cor} again, we obtain a cycle of $ G^{+} $ containing $ a,b,u,v $ and intersecting $ S_{i} $ at $ \{u,v\} $. 
Replace the positive edges $ \{a,u\}, \{u,b\} $ of $ C $ by the negative edges $ \{a,u\}, \{u,b\} $. 
The modified cycle satisfies the conditions in Lemma \ref{negative chord} and hence we have $ \{u,v\} \in E_{G}^{-} $. 

Finally, assume (\ref{dominating iii}). 
If $ u \in N_{G^{-}[C_{1}]}(a) \cap N_{G^{-}[C_{k}]}(b) $, then $ u $ is dominating in $ G^{-}[S_{i}] $ as in the case (\ref{dominating ii}). 
We assume that $ u \in N_{G^{-}[C_{1}]}(a) \setminus N_{G^{-}[C_{k}]}(b) $ and $ v \in N_{G^{-}[C_{k}]}(b) \setminus N_{G^{-}[C_{1}]}(a) $. 
Using Corollary \ref{Menger cor}, we have a cycle $ C $ of $ G^{+} $ containing $ a,b,u,v $ and intersecting $ S_{i} $ at $ \{u,v\} $. 
Replace the positive edges $ \{a,u\},\{v,b\} $ of $ C $ by the negative edges $ \{a,u\},\{v,b\} $. 
The modified cycle satisfies the conditions in Lemma \ref{negative chord} and hence we have $ \{u,v\} \in E_{G}^{-} $. 
Take a vertex $ w \in S_{i}\setminus \{u,v\} $. 
By Theorem \ref{Menger}, there exists an induced path $ p $ of $ G^{+} $ from $ a $ to $ b $ such that it intersects $ S $ at $ \{w\} $. 
Make a cycle $ C $ by connecting the path $ p $ and the path $ bvua $, where $ \{b,v\}, \{u,a\} $ are negative and $ \{v,u\} $ is positive (see Figure \ref{Fig:the balanced cycle}). 
\begin{figure}[t]
\centering
\begin{tikzpicture}
\draw (0,1.7) node[v, label=above:{$ w $}](w){};
\draw (-1.2,0.7) node[v, label=left:{$ a $}](a){};
\draw ( 1.2,0.7) node[v, label=right:{$ b $}](b){};
\draw (-0.5,0) node[v,label=below:{$ u $}](u){};
\draw ( 0.5,0) node[v,label=below:{$ v $}](v){};
\draw (0,0.8) node(){$ C $};
\draw[bend left, dotted] (a) to (w);
\draw[bend right, dotted] (b) to (w);
\draw[dashed] (a)--(u);
\draw[dashed] (b)--(v);
\draw (u)--(v);
\end{tikzpicture}
\caption{The balanced cycle (the dotted segments form the induced path $ p $)}\label{Fig:the balanced cycle}
\end{figure}
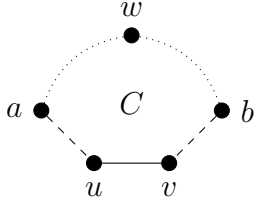
The balanced cycle $ C $ is of length at least five and hence must have a balanced chord. 
There are no edges connecting non-consecutive vertices in $ p $ since $ p $ is induced, and there are no negative edges $ \{u,b\}, \{v,a\} $ by the choice of $ u,v $. 
Hence candidates of negative chords are the negative edges connecting an internal vertex of $ p $ to $ u $ or $ v $. 
One can prove that such negative edges exist by considering smaller balanced cycles. 
Especially, we have $ \{u,w\} \in E_{G}^{-} $. 
Therefore we conclude that $ u $ is dominating in $ G^{-}[S_{i}] $. 

(\ref{dominating 2}) 
We proceed by induction on $ k $. 
When $ k=2 $, the thesis follows by (\ref{dominating 1}) since $ S_{1} $ is a unique minimal $ (a,b) $-separator. 
We assume that $ k \geq 3 $. 
If $ i=1 $, the assertion holds from (\ref{dominating 1}). 
Hence we assume that $ i \geq 2 $. 
By Theorem \ref{Ibarra2}(\ref{Ibarra2-2}), the separators form an antichain and hence there exists a vertex $ b^{\prime} \in S_{i} \setminus S_{i-1} $. 
Consider the subpath $ P^{\prime} $ of $ P $ from $ C_{1} $ to $ C_{i-1} $. 
We have that $ b^{\prime} \in S_{i} \subseteq C_{i-1}\setminus S_{i-1} $. 

Let $ S_{j} \ (1 \leq j \leq i-1) $ be a minimal $ (a,b^{\prime}) $-separator of minimal cardinality.
Using the clique intersection property (Theorem \ref{Ibarra2}(\ref{Ibarra2-1})), we have that $ N_{G^{-}[C_{1}]}(a) \subseteq C_{1} \cap S_{i} \subseteq S_{j} $. 

By (\ref{dominating 1}), every vertex $ u \in N_{G^{-}[C_{1}]}(a) $ is dominating in $ G^{-}[S_{i}] $. 
In particular, we have $ \{u,b^{\prime}\} \in E_{G}^{-} $. 
Hence $ N_{G^{-}[C_{1}]}(a) \subseteq N_{G^{-}[C_{i-1}]}(b^{\prime}) $ since $ N_{G^{-}[C_{1}]}(a) \subseteq S_{i} \subseteq C_{i-1} $. 
 
Therefore the subpath $ P^{\prime} $ and the vertices $ a,b^{\prime} $ satisfy the condition of the assertion. 
By our induction hypothesis, we conclude that the assertion is true. 
\end{proof}

\begin{lemma}\label{comp_signed or ss}
One of the following holds: 
\begin{enumerate}[(1)]
\item \label{comp_signed or ss 1} $ G $ has a signed-simplicial vertex. 
\item \label{comp_signed or ss 2} There exists a minimal vertex separator $ S $ of $ G^{+} $ such that $ G[S] $ is a complete signed graph with loops. 
\end{enumerate}
\end{lemma}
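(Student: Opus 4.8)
The plan is to show that, under the standing hypotheses, if $ G $ has no signed-simplicial vertex then some minimal vertex separator $ S $ of $ G^{+} $ satisfies (\ref{comp_signed or ss 2}). Since $ G^{+} $ is chordal, $ G^{+}[S] $ is complete for every minimal vertex separator $ S $ by Theorem \ref{Dirac minmal vertex separator}, and $ S \subseteq V_{G} = L_{G} $; so it suffices to produce a minimal vertex separator $ S $ of $ G^{+} $ with $ G^{-}[S] $ complete. First a preliminary reduction: if a connected component $ K $ of $ G^{+} $ is complete, then $ G[K] = (K_{|K|}, G^{-}[K], K) $ is balanced chordal, hence $ G^{-}[K] $ is threshold by Proposition \ref{G+ comp and full loops}, and by Corollary \ref{deg minimal ss} a vertex of $ K $ of minimal negative degree is signed simplicial in $ G[K] $, hence in $ G $ (all its incident edges lie in $ K $); so (\ref{comp_signed or ss 1}) holds. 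We may therefore assume $ G^{+} $ is connected and non-complete (restricting to one non-complete component if necessary).

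Now work with the clique-separator graph $ \mathcal{G} $ of $ G^{+} $, a sink box $ B $, and the path $ P\colon C_{1} - S_{2} - \dots - S_{k} - C_{k} $ in $ B $ between two leaf clique nodes set up before Lemma \ref{separator comp signed} (there are at least two such nodes, as $ G^{+} $ is non-complete and by Proposition \ref{sink_box_leaf}). Pick $ a \in C_{1}\setminus S_{2} $ and $ b \in C_{k}\setminus S_{k} $; by Corollary \ref{Ibarra 1 cor} the minimal $ (a,b) $-separators are exactly $ \{S_{2},\dots,S_{k}\} $, independently of $ a,b $, and let $ S_{i} $ be one of minimal cardinality. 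Each $ a \in C_{1}\setminus S_{2} $ is simplicial in $ G^{+} $ with $ N_{G^{+}}(a) = C_{1}\setminus\{a\} $; since $ L_{G} = V_{G} $, conditions (1) and (3) in the definition of signed simpliciality are automatic for $ a $, so $ a $ is signed simplicial exactly when every vertex of $ N_{G^{-}[C_{1}]}(a) $ is joined by a negative edge to every other vertex of $ C_{1} $. In particular, if $ N_{G^{-}[C_{1}]}(a) = \varnothing $ for some $ a \in C_{1}\setminus S_{2} $, or symmetrically at $ C_{k} $, that vertex is signed simplicial and we are done; so we may assume all these negative neighbourhoods are non-empty.

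If there exist $ a \in C_{1}\setminus S_{2} $ and $ b \in C_{k}\setminus S_{k} $ with $ N_{G^{-}[C_{1}]}(a) \not\subseteq S_{i} $ and $ N_{G^{-}[C_{k}]}(b) \not\subseteq S_{i} $, then Lemma \ref{separator comp signed} gives directly that $ G[S_{i}] $ is a complete signed graph with loops, which is (\ref{comp_signed or ss 2}). Otherwise, reversing $ P $ if necessary, we may assume $ N_{G^{-}[C_{1}]}(a) \subseteq S_{i} $ for every $ a \in C_{1}\setminus S_{2} $; by the clique intersection property (Theorem \ref{Ibarra2}(\ref{Ibarra2-1})) this gives $ N_{G^{-}[C_{1}]}(a) \subseteq C_{1}\cap S_{i} \subseteq S_{2} $. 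Splitting into cases according to the negative neighbourhoods at $ C_{k} $ (whether some such set fails to lie in $ S_{i} $, whether two of them are comparable, or neither), one of the hypotheses (\ref{dominating i}), (\ref{dominating ii}), (\ref{dominating iii}) of Lemma \ref{dominating} always holds for a suitable choice of $ a $ and $ b $, possibly after exchanging the two ends of $ P $; then Lemma \ref{dominating}(\ref{dominating 2}) shows that every vertex of $ N_{G^{-}[C_{1}]}(a) $ is dominating in $ G^{-}[S_{2}] $, for every $ a \in C_{1}\setminus S_{2} $.

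To finish, combine this with balanced chordality. All vertices of $ S_{2} $ that occur as a negative neighbour of some $ a \in C_{1}\setminus S_{2} $ dominate $ G^{-}[S_{2}] $; if they exhaust $ S_{2} $ then $ G^{-}[S_{2}] $ is complete and, as $ S_{2} $ is a minimal vertex separator of $ G^{+} $, (\ref{comp_signed or ss 2}) holds. In the remaining case there is $ z \in S_{2} $ with no negative edge to any vertex of $ C_{1}\setminus S_{2} $, and one must then exhibit a signed-simplicial vertex; I expect this to be the main obstacle. The idea is to take $ a \in C_{1}\setminus S_{2} $ with $ |N_{G^{-}[C_{1}]}(a)| $ minimal and, assuming $ a $ is not signed simplicial, to use a ``bad pair'' for $ a $ together with $ z $ and the domination property to build — after the edge-replacement manipulations of Lemmas \ref{separator comp signed} and \ref{dominating} — a balanced cycle meeting $ S_{i} $ in two vertices, with its edges near $ a $ and near $ b $ reaching into the two components of $ G\setminus S_{i} $, so that Lemma \ref{negative chord} forces a negative edge contradicting the bad pair (or else forces one of the candidate vertices of $ C_{1}\setminus S_{2} $ to be signed simplicial). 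Orchestrating the sign bookkeeping and the interplay between the two ends of $ P $ is the delicate part of the argument.
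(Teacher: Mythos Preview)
Your setup (clique–separator graph, sink box, leaf cliques, the path $P$, the separator $S_i$ of minimal cardinality) matches the paper, and your invocation of Lemma~\ref{separator comp signed} in the first case is correct. The genuine gap is the endgame. You yourself flag the remaining case (``I expect this to be the main obstacle'') and only sketch a plan involving a ``bad pair'' for $a$ and another cycle-building argument via Lemma~\ref{negative chord}. This is not carried out, and it is not clear it can be: the sign bookkeeping needed to force a contradiction from a hypothetical non-simplicial $a$ of minimal negative degree does not fall out of Lemma~\ref{negative chord} in any obvious way.

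The paper avoids this obstacle entirely with a short degree-counting argument you overlooked. One does \emph{not} try to make $G^{-}[S_2]$ complete, nor build further cycles. Instead, pick from the start $a\in C_1\setminus S_2$ with $\deg_{G^{-}[C_1]}(a)$ minimal among $C_1\setminus S_2$ (and similarly $b$). After reducing by symmetry to $N_{G^{-}[C_1]}(a)\subseteq S_i\subseteq S_2$, one of the hypotheses (i)--(iii) of Lemma~\ref{dominating} holds for this single pair $(a,b)$ (possibly after swapping ends once), so every $u\in N_{G^{-}[C_1]}(a)$ is dominating in $G^{-}[S_2]$. Now compare degrees inside $C_1$: for $u\in N_{G^{-}[C_1]}(a)$ one has $\deg_{G^{-}[C_1]}(u)\ge |\{a\}\cup(S_2\setminus\{u\})|=|S_2|\ge |N_{G^{-}[C_1]}(a)|=\deg_{G^{-}[C_1]}(a)$, and for $u\in S_2\setminus N_{G^{-}[C_1]}(a)$ every vertex of $N_{G^{-}[C_1]}(a)$ is a negative neighbour of $u$, so again $\deg_{G^{-}[C_1]}(u)\ge \deg_{G^{-}[C_1]}(a)$. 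Thus $\deg_{G^{-}[C_1]}(a)$ is minimal in all of $C_1$, and Corollary~\ref{deg minimal ss} (applied to $G[C_1]$, which is balanced chordal with complete positive part and full loops) gives that $a$ is signed simplicial in $G[C_1]$, hence in $G$ since $C_1$ is a leaf. Your detour through ``the dominating vertices exhaust $S_2$'' versus ``there is a $z\in S_2$ with no negative edge into $C_1\setminus S_2$'' is unnecessary; both cases are absorbed by this degree comparison.
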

\begin{proof}
By Proposition \ref{sink_box_leaf}, every sink box of $ \mathcal{G} $ has at least two clique nodes which are leaves. 
We assume the clique nodes $ C_{1},C_{k} $ of our path $ P $ are leaves of a sink box. 
Moreover, suppose that the degrees $ \deg_{G^{-}[C_{1}]}(a), \deg_{G^{-}[C_{k}]}(b) $ are minimal in $ C_{1}\setminus S_{2}, C_{k}\setminus S_{k} $, respectively. 
If $ N_{G^{-}[C_{1}]}(a) \not\subseteq S_{i} $ and $ N_{G^{-}[C_{k}]}(b) \not\subseteq S_{i} $, then $ G[S_{i}] $ is a complete signed graph with loops by Lemma \ref{separator comp signed}. 
Thus (\ref{comp_signed or ss 2}) holds. 

Now, we may assume that $ N_{G^{-}[C_{1}]}(a) \subseteq S_{i} $ by symmetry. 
Then we have $ N_{G^{-}[C_{1}]}(a) \subseteq S_{2} $ by the clique intersection property (see Theorem \ref{Ibarra2}(\ref{Ibarra2-1})). 
We will show that $ \deg_{G^{-}[C_{1}]}(a) $ is minimal in $ C_{1} $. 
In order to do that, take a vertex $ u \in S_{2} $ and compare the degrees. 

First, assume that $ u \in N_{G^{-}[C_{1}]}(a) $. 
Since one of the conditions in Lemma \ref{dominating} holds,  the vertex $ u $ is dominating in $ G^{-}[S_{2}] $ by Lemma \ref{dominating}(\ref{dominating 2}). 
Then we have 
\begin{align*}
\deg_{G^{-}[C_{1}]}(u) \geq |\{a\} \cup (S_{2} \setminus \{u\})| = |S_{2}| \geq |N_{G^{-}[C_{1}]}(a)| = \deg_{G^{-}[C_{1}]}(a). 
\end{align*}

Second, suppose that $ u \in S_{2} \setminus N_{G^{-}[C_{1}]}(a) $. 
There exists a negative edge from each vertex in $ N_{G^{-}[C_{1}]}(a) $ to $ u $ since every vertex in $ N_{G^{-}[C_{1}]}(a) $ is dominating in $ G^{-}[S_{2}] $. 
Hence we have 
\begin{align*}
\deg_{G^{-}[C_{1}]}(u) \geq |N_{G^{-}[C_{1}]}(a)| = \deg_{G^{-}[C_{1}]}(a). 
\end{align*}

Thus $ \deg_{G^{-}[C_{1}]}(a) $ is minimal in $ C_{1} $. 
By Proposition \ref{bc threshold} and Corollary \ref{deg minimal ss}, we have that the vertex $ a $ is signed simplicial in $ G[C_{1}] $. 
Since $ a \in C_{1}\setminus S_{2} $ and $ C_{1} $ is a leaf of a sink box, the separator $ S_{2} $ separates $ a $ from any vertices in $ V_{G} \setminus C_{1} $ by Theorem \ref{Ibarra 1}. 
Therefore the vertices adjacent to $ a $ belong to $ C_{1} $ and hence $ a $ is a signed-simplicial vertex of $ G $. 
\end{proof}

\begin{example}
By Theorems \ref{Dirac minmal vertex separator} and \ref{Dirac simplicial}, every chordal simple graph has a simplicial vertex and all minimal vertex separators of it are cliques. 
Regarding our case, there are signed graphs which satisfy only one of the conditions in Lemma \ref{comp_signed or ss}. 
The left graph in Figure \ref{example bc comp ss} has signed-simplicial vertices $ b,e $ and a unique minimal vertex separator $ \{a,c,d\} $ does not induce a complete signed graph with loops. 
The right graph has no signed-simplicial vertices and the minimal vertex separator $ \{c,d\} $ induces $ B_{2} $. 
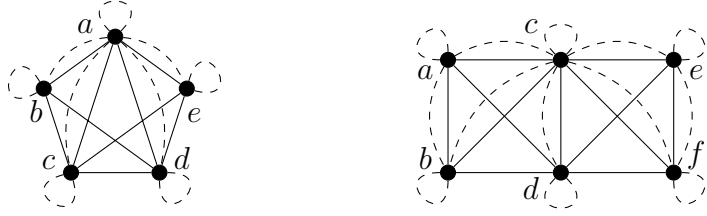
\begin{figure}[t]
\centering
\begin{tikzpicture}
\draw (90:1) node[v, label={[xshift=-4mm, yshift=-2mm]$ a $}](a){};
\draw (90+72:1) node[v, label={[xshift=-1mm, yshift=-7mm]$ b $}](b){};
\draw (90+72*2:1) node[v, label={[xshift=-3mm, yshift=-2mm]$ c $}](c){};
\draw (90+72*3:1) node[v, label={[xshift=3mm, yshift=-2mm]$ d $}](d){};
\draw (90+72*4:1) node[v, label={[xshift=1mm, yshift=-7mm]$ e $}](e){};
\draw (b)--(a)--(c)--(b)--(d)--(c)--(e)--(d)--(a)--(e);
\draw[bend right,dashed] (a) to (b);
\draw[bend right,dashed] (a) to (c);
\draw[bend left,dashed] (a) to (d);
\draw[bend left,dashed] (a) to (e);
\draw[scale=1.5,dashed] (a)  to[in=135,out=45,loop] (a);
\draw[scale=1.5,dashed] (b)  to[in=135+75,out=45+75,loop] (b);
\draw[scale=1.5,dashed] (c)  to[in=135+75*2,out=45+75*2,loop] (c);
\draw[scale=1.5,dashed] (d)  to[in=135+75*3,out=45+75*3,loop] (d);
\draw[scale=1.5,dashed] (e)  to[in=135+75*4,out=45+75*4,loop] (e);
\end{tikzpicture}
\hspace{15mm}
\begin{tikzpicture}
\draw (0,1.5) node[v,label={[xshift=-3mm, yshift=-5mm]$ a $}](a){}; 
\draw (0,0) node[v,label={[xshift=-3mm, yshift=-2mm]$ b $}](b){};
\draw (1.5,1.5) node[v,label={[xshift=-4mm, yshift=1mm]$ c $}](c){}; 
\draw (1.5,0) node[v,label={[xshift=-4mm, yshift=-6mm]$ d $}](d){};
\draw (3,1.5) node[v,label={[xshift=3mm, yshift=-5mm]$ e $}](e){}; 
\draw (3,0) node[v,label={[xshift=3mm, yshift=-2mm]$ f $}](f){};
\draw (a)--(b)--(d)--(f)--(e)--(c)--(a);
\draw (a)--(d)--(e);
\draw (b)--(c)--(f);
\draw (c)--(d);
\draw[bend right,dashed] (a) to (b);
\draw[bend left,dashed] (a) to (c);
\draw[bend right,dashed] (c) to (b);
\draw[bend right,dashed] (c) to (d);
\draw[bend left,dashed] (c) to (e);
\draw[bend left,dashed] (c) to (f);
\draw[bend left,dashed] (e) to (f);
\draw[scale=1.5,dashed] (a)  to[in=180,out=90,loop] (a);
\draw[scale=1.5,dashed] (b)  to[in=270,out=180,loop] (b);
\draw[scale=1.5,dashed] (c)  to[in=135,out=45,loop] (c);
\draw[scale=1.5,dashed] (d)  to[in=315,out=225,loop] (d);
\draw[scale=1.5,dashed] (e)  to[in=90,out=0,loop] (e);
\draw[scale=1.5,dashed] (f)  to[in=0,out=270,loop] (f);
\end{tikzpicture}
\caption{Examples for Lemma \ref{comp_signed or ss}}\label{example bc comp ss}
\end{figure}
\end{example}

The following Lemma is a generalization of Theorem \ref{Dirac simplicial}. 
\begin{lemma}\label{two non-adj div}
The signed graph $ G $ has at least two non-adjacent divisional vertices $ v_{1},v_{2} $ such that the contractions $ G/e_{i} $ with respect to some incident divisional edges $ e_{i}=(v_{i},v_{i}^{\prime}) $ satisfy the following conditions:
\begin{enumerate}[(i)]
\item $ (G/e_{i})^{+} \supseteq (G/e_{i})^{-} $. 
\item $ G/e_{i} $ is balanced chordal. 
\end{enumerate}
\end{lemma}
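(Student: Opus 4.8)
The plan is to prove this by strong induction on $|V_{G}|$, using Lemma \ref{comp_signed or ss} to split into two cases. Two preliminary observations carry most of the weight. First, contracting a positive edge $e$ of a signed graph with $G^{+}\supseteq G^{-}$ again yields $(G/e)^{+}\supseteq (G/e)^{-}$: a negative edge of $G/e$ either survives from $G$, hence is positive in $G$ and in $G/e$ since $G^{+}\supseteq G^{-}$, or is created at the contracted endpoint from a negative edge of $G$, hence is positive in $G/e$ for the same reason. Second, a signed-simplicial vertex $v$ of $G$ that has a neighbour $w$ has the positive incident edge $(v,w)$ (because $E_{G}^{-}\subseteq E_{G}^{+}$); this edge is divisional by Proposition \ref{ss => divisional}, and $G/(v,w)=G\setminus\{v\}$ by Proposition \ref{add ss}(\ref{add ss 1}) is an induced subgraph of $G$, hence balanced chordal by Proposition \ref{add ss}(\ref{add ss 4}). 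So every signed-simplicial vertex of $G$ with a neighbour is an admissible choice for a $v_{i}$ (i.e.\ has an incident divisional edge $e$ with $(G/e)^{+}\supseteq(G/e)^{-}$ and $G/e$ balanced chordal), and it suffices to exhibit two non-adjacent admissible vertices. (If $G^{+}$ has an isolated vertex one passes to a non-complete component, so we may assume every maximal clique of $G^{+}$ has at least two vertices.)

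\emph{Case 1: some minimal vertex separator $S$ of $G^{+}$ has $G[S]$ a complete signed graph with loops.} Since $S$ is a clique of $G^{+}$ separating it, write $G=G_{1}\cup_{B_{n}}G_{2}$ as in Lemma \ref{gluing}, with $G_{1},G_{2}$ induced, $G_{1}\cap G_{2}=G[S]=B_{n}$, and no edge of $G$ joining $V_{G_{1}}\setminus S$ to $V_{G_{2}}\setminus S$; each $G_{i}$ is balanced chordal, satisfies $G_{i}^{+}\supseteq G_{i}^{-}$, and has full loop set. In each $G_{i}$ pick a vertex $v_{i}\in V_{G_{i}}\setminus S$ together with an incident divisional edge $e_{i}$ of $G_{i}$ with $(G_{i}/e_{i})^{+}\supseteq(G_{i}/e_{i})^{-}$ and $G_{i}/e_{i}$ balanced chordal: if $G_{i}^{+}$ is non-complete this is the induction hypothesis applied to $G_{i}$ (at least one of its two output vertices avoids the clique $S$), and if $G_{i}^{+}$ is complete then $G_{i}=(K,G_{i}^{-},V_{G_{i}})$ with $G_{i}^{-}$ threshold by Proposition \ref{bc threshold}, and pairing any $v_{i}\in V_{G_{i}}\setminus S$ with a vertex $v_{0}$ of minimum $G_{i}^{-}$-degree works: $(v_{i},v_{0})$ is a positive divisional edge by Corollary \ref{deg minimal ss} and Proposition \ref{ss => divisional}, and its contraction is again of the form $(K,\text{threshold},V_{G_{i}}\setminus\{v_{i}\})$ by the nesting of neighbourhoods in $G_{i}^{-}$, hence balanced chordal by Proposition \ref{bc threshold}. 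Since $v_{i}\notin S$, the contraction leaves $B_{n}$ in place and $G/e_{i}=(G_{i}/e_{i})\cup_{B_{n}}G_{j}$ with $\{i,j\}=\{1,2\}$; thus $G/e_{i}$ is balanced chordal by Lemma \ref{gluing}(\ref{gluing 2}), $(G/e_{i})^{+}\supseteq(G/e_{i})^{-}$, and $\chi(G/e_{i},t)=\chi(G_{i}/e_{i},t)\chi(G_{j},t)/\chi(B_{n},t)$ divides $\chi(G,t)=\chi(G_{i},t)\chi(G_{j},t)/\chi(B_{n},t)$ by Lemma \ref{gluing}(\ref{gluing 1}) and divisionality in $G_{i}$, so $e_{i}$ is divisional in $G$. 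Finally $v_{1}\in V_{G_{1}}\setminus S$ and $v_{2}\in V_{G_{2}}\setminus S$ are non-adjacent in $G$.

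\emph{Case 2: no minimal vertex separator of $G^{+}$ induces a complete signed graph with loops.} By Lemma \ref{comp_signed or ss}, $G$ has a signed-simplicial vertex; I claim there are two non-adjacent ones. Take a sink box $B$ of the clique-separator graph of $G^{+}$, which has at least two clique-node leaves since $G^{+}$ is non-complete (Proposition \ref{sink_box_leaf}). For a leaf clique node $C$ with adjacent separator node $S_{C}$, pick another leaf clique node and run the argument from the proof of Lemma \ref{comp_signed or ss} along a path between them: no separator on this path induces a complete signed graph with loops, so Lemma \ref{separator comp signed} is never invoked and the estimates of Lemma \ref{dominating} force the minimum $G^{-}[C]$-degree to be attained at some $a\in C\setminus S_{C}$; by Proposition \ref{bc threshold} and Corollary \ref{deg minimal ss}, $a$ is signed simplicial in $G[C]$, and signed simplicial in $G$ because $S_{C}$ confines its neighbours to $C$. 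Applying this to two distinct leaf clique nodes of $B$ gives two signed-simplicial vertices of $G$, non-adjacent because each is confined to its own leaf clique, and each has a neighbour since every maximal clique has at least two vertices. This settles Case 2 and completes the induction.

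The main obstacle is the claim in Case 2 that, absent a complete-signed-with-loops separator, \emph{each} leaf clique of a sink box contains a signed-simplicial vertex of $G$ in its non-separator part: this is a one-ended strengthening of the inherently two-ended conclusion of Lemma \ref{comp_signed or ss}, and verifying it rigorously means re-running the degree comparisons of Lemma \ref{dominating} from each end (checking in particular that ``dominating in $G^{-}[S_{2}]$'' can be extracted using only one leaf). A secondary, purely mechanical point is the threshold sub-case of Case 1, where one checks that contracting an arbitrary vertex of $V_{G_{i}}\setminus S$ into a minimum-degree vertex of $G_{i}^{-}$ preserves both threshold-ness and the required divisibility.
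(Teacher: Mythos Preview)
Your separator case (Case 1) is essentially the paper's second case and is correct; in particular the threshold sub-case works because for a positive edge $G_i/(v_i,v_0)\cong G_i/(v_0,v_i)=G_i\setminus\{v_0\}$ by Proposition~\ref{add ss}(\ref{add ss 1}), so balanced chordality and $+\supseteq-$ are inherited.

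The gap is in your Case 2, and it is a genuine one: the ``one-ended strengthening'' you flag is actually \emph{false}. Take $V_G=\{1,2,3,4,5\}$, $G^{+}$ the clique-sum of $K_4$ on $\{1,2,3,4\}$ and $K_3$ on $\{3,4,5\}$ along $\{3,4\}$, $E_G^{-}=\{\{1,2\}\}$, and $L_G=V_G$. Then $G^{+}\supseteq G^{-}$, $G$ is balanced chordal (every balanced cycle lies in $G^{+}$, which is chordal), and the unique minimal vertex separator $\{3,4\}$ carries no negative edge, so we are squarely in your Case 2. The sink box has two leaf cliques $C_1=\{1,2,3,4\}$ and $C_2=\{3,4,5\}$, but \emph{neither} vertex of $C_1\setminus S_2=\{1,2\}$ is signed simplicial in $G$: for $v=1$ one has $\{3,1\}\in E_G^{+}$ and $\{2,1\}\in E_G^{-}$, yet $\{2,3\}\notin E_G^{-}$. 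The only signed-simplicial vertices are $3,4,5$, and these are pairwise adjacent. The reason your re-run of Lemma~\ref{comp_signed or ss} cannot succeed here is that the hypothesis of Lemma~\ref{dominating} is $N_{G^{-}[C_1]}(a)\subseteq S_i$, and in this example $N_{G^{-}[C_1]}(1)=\{2\}\not\subseteq\{3,4\}$; the contrapositive of Lemma~\ref{separator comp signed} then forces $N_{G^{-}[C_k]}(b)\subseteq S_i$, which tells you about $b$'s end, not $a$'s.

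The paper sidesteps this by a different dichotomy. Rather than ``some separator is complete-signed'' versus ``none is'', it uses Lemma~\ref{comp_signed or ss} as stated: either some separator is complete-signed (your Case 1), \emph{or} $G$ has at least one signed-simplicial vertex $v_1$. In the latter case it sets $F=G\setminus\{v_1\}$ and applies the induction hypothesis to $F$ (or Proposition~\ref{G+ comp and full loops} if $F^{+}$ is complete) to get a divisional vertex $v_2\in F$ non-adjacent to $v_1$ with a good edge $e_2$. The point is that $v_2$ need not be signed simplicial in $G$; one checks directly that $e_2$ remains divisional in $G$ by the calculation
\[
\chi(G,t)=(t-d)\chi(F,t)=(t-d)(t-d')\chi(F/e_2,t)=(t-d')\chi(G/e_2,t),
\]
using Proposition~\ref{add ss}(\ref{add ss 3}) twice (once for $G$, once for $G/e_2$, where $v_1$ is still signed simplicial since $v_1,v_2$ are non-adjacent). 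In the example above this produces the non-adjacent divisional pair $\{1,5\}$ even though $1$ is not signed simplicial. Your induction can be repaired by adopting this ``remove one simplicial vertex and lift'' step in place of the attempt to find two simplicial vertices directly.
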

\begin{proof}
Without loss of generality, we may assume that $ G $ is connected. 
We proceed by induction on the number of vertices of $ G $. 
If $ |V_{G}| $ equals $ 1 $ or $ 2 $, then $ G^{+} $ must be complete, and hence we have nothing to prove. 

Assume that $ |V_{G}| \geq 3 $. 
By Lemma \ref{comp_signed or ss}, our graph $ G $ has a signed-simplicial vertex or a separator $ S $ such that $ G[S] $ is a complete signed graph with loops. 

First, suppose that $ G $ has a signed-simplicial vertex $ v_{1} $. 
Let $ F \coloneqq G\setminus \{v_{1}\} $, which is connected since $ G $ is connected and $ v_{1} $ is signed simplicial. 
The vertex $ v_{1} $ is divisional by Proposition \ref{ss => divisional} and every incident edge $ e_{1}=(v_{1},v_{1}^{\prime}) $ is divisional. 
Moreover, by Proposition \ref{add ss}(\ref{add ss 1}), the contraction $ G/e_{1} = F $ satisfies the conditions. 
We will show that there exists a vertex of $ F $ which is non-adjacent to $ v_{1} $ and divisional in $ F $. 
If $ F^{+} $ is complete, then every vertex in $ F $ is divisional in $ F $ by Propositions \ref{G+ comp and full loops} and \ref{ss => divisional} and the contraction of $ F $ with respect to an incident divisional edge satisfies the conditions. 
Since $ G^{+} $ is non-complete, there exists a vertex in $ F $ which is non-adjacent to $ v_{1} $. 
When $ F^{+} $ is non-complete, by the induction hypothesis, $ F $ has two non-adjacent divisional vertices such that the conditions are satisfied. 
Since $ v_{1} $ is signed simplicial, one of them is non-adjacent to $ v_{1} $. 
Thus, in the both cases, there exists a vertex $ v_{2} $ of $ F $ which is non-adjacent to $ v_{1} $ and there exists an incident divisional edge $ e_{2}=(v_{2},v_{2}^{\prime}) $ such that $ F/e_{2} $ satisfies the conditions. 

Now, we show that $ v_{2} $ is also divisional in $ G $ and $ G/e_{2} $ satisfies the conditions. 
By the definition of a divisional edge, there exists a non-negative integer $ d^{\prime} $ such that  
\begin{align*}
\chi(F,t) = (t-d^{\prime})\chi(F/e_{2},t). 
\end{align*}
Since $ v_{1},v_{2} $ are non-adjacent, we have that $ v_{1} $ is signed simplicial in $ G/e_{2} $. 
Let $ d $ denote the degree of $ v_{1} $ in $ G $ (or equivalently in $ G/e_{2} $). 
By Proposition \ref{add ss}(\ref{add ss 3}), we have 
\begin{align*}
\chi(G/e_{2},t)=(t-d)\chi((G/e_{2})\setminus \{v_{1}\},t)=(t-d)\chi(F/e_{2},t). 
\end{align*}
By Proposition \ref{add ss}(\ref{add ss 3}) again, we have that 
\begin{align*}
\chi(G,t) 
&= (t-d)\chi(G \setminus\{v_{1}\},t) 
= (t-d)\chi(F,t) \\
&= (t-d)(t-d^{\prime})\chi(F/e_{2},t) 
= (t-d^{\prime})\chi(G/e_{2},t). 
\end{align*}
Thus $ e_{2} $ is a divisional edge in $ G $.
Moreover, since $ v_{1} $ is signed simplicial in $ G/e_{2} $ and $ (G/e_{2}) \setminus \{v_{1}\} = F/e_{2} $ is balanced chordal, we have $ G/e_{2} $ is also balanced chordal by Proposition \ref{add ss}(\ref{add ss 4}). 
The condition $ (G/e_{2})^{+} \supseteq (G/e_{2})^{-} $ follows immediately from the condition $ (F/e_{2})^{+} \supseteq (F/e_{2})^{-} $.

Next, we suppose that $ G $ has a minimal vertex separator $ S $ such that $ G[S] $ is a complete signed graph with loops. 
Let $ A $ be the vertex set of a connected component of $ G \setminus S $. 
Put $ G_{1} \coloneqq G[A\cup S] $. 
We have $ G_{1}^{+} $ is complete or, by the induction hypothesis, $ G_{1} $ has two non-adjacent divisional vertices such that the condition satisfied. 
In the both cases, there exist a vertex $ v \in A $ and an incident divisional edge $ e=(v,v^{\prime}) $ of $ G_{1} $ such that $ G_{1}/e $ satisfies the conditions. 

We show that $ v $ is divisional in $ G $. 
By the definition of a divisional edge, there exists a non-negative integer $ d $ such that $ \chi(G_{1},t)=(t-d)\chi(G_{1}/e,t) $. 
Let $ G_{2} \coloneqq G \setminus (A\cup S) $. 
Since $ v \in A $, we have
\begin{align*}
G = G_{1} \cup G_{2} &\text{ and } G_{1} \cap G_{2} = G[S], \\
G/e = (G_{1}/e) \cup G_{2} &\text{ and } (G_{1}/e) \cap G_{2} = G[S]. 
\end{align*}
By Lemma \ref{gluing}, 
\begin{align*}
\chi(G,t) &= \frac{\chi(G_{1},t)\chi(G_{2},t)}{\chi(G[S],t)} 
= \frac{(t-d)\chi(G_{1}/e,t)\chi(G_{2},t)}{\chi(G[S],t)} 
= (t-d)\chi(G/e,t). 
\end{align*}
Thus $ e $ is divisional in $ G $, and hence $ v $ is divisional in $ G $. 
Moreover, since $ G_{1}/e $ and $ G_{2} $ are balanced chordal, we have $ G/e $ is also balanced chordal by Lemma \ref{gluing}. 
The condition $ (G/e)^{+} \supseteq (G/e)^{-} $ is obvious. 

We have shown that every connected component of $ G \setminus S $ has a vertex which is divisional in $ G $ having a divisional edge which satisfies the condition. 
Hence $ G $ has at least two non-adjacent divisional vertices satisfying the desired property. 
\end{proof}

\subsection{Proof of Theorem \ref{main theorem}}
We  are now ready to give a proof of Theorem \ref{main theorem}. 
\begin{proof}[Proof of Theorem \ref{main theorem}]
$ (\ref{main theorem 1}) \Rightarrow (\ref{main theorem 2}) $ 
If $ G^{+} $ is complete, then $ \mathcal{A}(G) $ is free by Proposition \ref{G+ comp and full loops}. 
Suppose that $ G^{+} $ is non-complete. 
We proceed by induction on the number of vertices $ |V_{G}| $. 
We may assume that $ |V_{G}| \geq 3 $. 
By Lemma \ref{two non-adj div}, we have a divisional edge $ e $ such that $ (G/e)^{+} \supseteq (G/e)^{-} $ and $ G/e $ is balanced chordal. 
Moreover, every vertex in $ G/e $ admits a loop. 
Therefore, by the induction hypothesis, we have that $ \mathcal{A}(G/e) $ is free. 
Using Lemma \ref{division theorem for signed graph}, we conclude that $ \mathcal{A}(G) $ is free. 

$ (\ref{main theorem 2}) \Rightarrow (\ref{main theorem 3}) $ 
This is trivial. 

$ (\ref{main theorem 3}) \Rightarrow (\ref{main theorem 1}) $ 
This follows by Lemma \ref{free => balanced chordal}. 
\end{proof}

\section{Open Problems}
It appears that determining freeness of arbitrary signed-graphic arrangements is quite difficult. 
In this paper, we consider the problem under the assumption $ G^{+} \supseteq G^{-} $. 
However, behavior of loop sets admitting freeness is still inscrutable. 
\begin{problem}
Assume that a balanced-chordal signed graph $ G=(G^{+},G^{-}) $ satisfies $ G^{+} \supseteq G^{-} $. 
Find a necessary and sufficient condition on a loop set $ L $ for $ \mathcal{A}(G^{+},G^{-},L) $ to be free. 
\end{problem}

For a free arrangement $ \mathcal{A} $, the multiset of degrees of a homogeneous basis for $ D(\mathcal{A}) $ is called the set of \textbf{exponents}. 
The degrees of a free graphic arrangement can be described in terms of graphs. 
Namely, if $ (v_{1}, \dots, v_{\ell}) $ is a perfect elimination ordering of a chordal graph $ G $, then the multiset $ \Set{\deg_{G[\{v_{1},\dots, v_{i} \}]}(v_{i}) | 1 \leq i \leq \ell} $ coincides with the exponents of $ \mathcal{A}(G) $. 

\begin{problem}
Is there any graphical interpretation of the exponents of free signed-graphic arrangements? 
\end{problem}

\bibliographystyle{amsplain1}
\bibliography{bibfile_paper}

\providecommand{\bysame}{\leavevmode\hbox to3em{\hrulefill}\thinspace}
\providecommand{\MR}{\relax\ifhmode\unskip\space\fi MR }
% \MRhref is called by the amsart/book/proc definition of \MR.
\providecommand{\MRhref}[2]{%
  \href{http://www.ams.org/mathscinet-getitem?mr=#1}{#2}
}
\providecommand{\href}[2]{#2}
\begin{thebibliography}{10}

\bibitem{abe2016divisionally-im}
T.~Abe, Divisionally free arrangements of hyperplanes, {\em Inventiones
  mathematicae\/} \textbf{204} (2016), no.~1, 317--346.

\bibitem{abe2016freeness-jotems}
T.~Abe, M.~Barakat, M.~Cuntz, T.~Hoge, and H.~Terao, The freeness of ideal
  subarrangements of {Weyl} arrangements, {\em Journal of the European
  Mathematical Society\/} \textbf{18} (2016), no.~6, 1339--1348.

\bibitem{bailey????inductively-p}
G.~D. Bailey, Inductively factored signed-graphic arrangements of hyperplanes,
  {\em Preprent\/}.

\bibitem{bjorner1990hyperplane-dcg}
A.~Bj{\"o}rner, P.~H. Edelman, and G.~M. Ziegler, Hyperplane arrangements with
  a lattice of regions, {\em Discrete \& Computational Geometry\/} \textbf{5}
  (1990), no.~3, 263--288.

\bibitem{dirac1961rigid-aadmsduh}
G.~A. Dirac, On rigid circuit graphs, Abhandlungen aus dem {Mathematischen}
  {Seminar} der {Universität} {Hamburg}, vol.~25, Springer, 1961, pp.~71--76.

\bibitem{edelman1994free}
P.~H. Edelman and V.~Reiner, Free hyperplane arrangements between {$A_{n-1}$}
  and {$B_n$}, {\em Math. Z.\/} \textbf{215} (1994), no.~3, 347--365.

\bibitem{fulkerson1965incidence-pjom}
D.~Fulkerson and O.~Gross, Incidence matrices and interval graphs, {\em Pacific
  journal of mathematics\/} \textbf{15} (1965), no.~3, 835--855.

\bibitem{golumbic1978trivially-dm}
M.~C. Golumbic, Trivially perfect graphs, {\em Discrete Mathematics\/}
  \textbf{24} (1978), no.~1, 105--107.

\bibitem{hammer1981threshold-sjoadm}
P.~Hammer, T.~Ibaraki, and B.~Simeone, Threshold {Sequences}, {\em SIAM Journal
  on Algebraic Discrete Methods\/} \textbf{2} (1981), no.~1, 39--49.

\bibitem{ibarra2009clique-separator-dam}
L.~Ibarra, The clique-separator graph for chordal graphs, {\em Discrete Applied
  Mathematics\/} \textbf{157} (2009), no.~8, 1737--1749.

\bibitem{jambu1984free-aim}
M.~Jambu and H.~Terao, Free arrangements of hyperplanes and supersolvable
  lattices, {\em Advances in Mathematics\/} \textbf{52} (1984), no.~3,
  248--258.

\bibitem{menger1927zur-fm}
K.~Menger, Zur allgemeinen {Kurventheorie}, {\em Fundamenta Mathematicae\/}
  \textbf{10} (1927), no.~1, 96--115.

\bibitem{orlik1992arrangements}
P.~Orlik and H.~Terao, \emph{Arrangements of {Hyperplanes}}, Grundlehren der
  mathematischen {Wissenschaften}, vol. 300, Springer Berlin Heidelberg,
  Berlin, Heidelberg, 1992, DOI: 10.1007/978-3-662-02772-1.

\bibitem{oxley2011matroid}
J.~Oxley, \emph{Matroid {Theory}}, second edition ed., Oxford {Graduate}
  {Texts} in {Mathematics}, Oxford University Press, Oxford, New York, February
  2011.

\bibitem{rose1991free-joa}
L.~L. Rose and H.~Terao, A free resolution of the module of logarithmic forms
  of a generic arrangement, {\em Journal of Algebra\/} \textbf{136} (1991),
  no.~2, 376--400.

\bibitem{saito1977unifomization}
K.~Saito, On the uniformization of complements of discriminant loci, RIMS
  K\^oky\^uroku, vol. 287, 1977, pp.~117--137.

\bibitem{saito1980theory}
K.~Saito, Theory of logarithmic differential forms and logarithmic vector
  fields, {\em J. Fac. Sci. Univ. Tokyo Sect. IA Math.\/} \textbf{27} (1980),
  no.~2, 265--291.

\bibitem{stanley1972supersolvable}
R.~P. Stanley, Supersolvable lattices, {\em Algebra Universalis\/} \textbf{2}
  (1972), 197--217.

\bibitem{stanley2007introduction}
R.~P. Stanley, An introduction to hyperplane arrangements, in {\em Geometric
  combinatorics\/}, IAS/Park City Math. Ser., vol.~13, Amer. Math. Soc.,
  Providence, RI, 2007, pp.~389--496.

\bibitem{suyama2015vertex-weighted-a}
D.~Suyama and S.~Tsujie, Vertex-weighted graphs and freeness of {$ \psi
  $}-graphical arrangements, {\em arXiv:1511.04853\/} (2015).

\bibitem{terao1981generalized-im}
H.~Terao, Generalized exponents of a free arrangement of hyperplanes and
  {Shepherd}-{Todd}-{Brieskorn} formula, {\em Inventiones mathematicae\/}
  \textbf{63} (1981), no.~1, 159--179.

\bibitem{terao1983exponents}
H.~Terao, The exponents of a free hypersurface, in {\em Singularities, {P}art 2
  ({A}rcata, {C}alif., 1981)\/}, Proc. Sympos. Pure Math., vol.~40, Amer. Math.
  Soc., Providence, RI, 1983, pp.~561--566.

\bibitem{yuzvinsky1991free-joa}
S.~Yuzvinsky, A free resolution of the module of derivations for generic
  arrangements, {\em Journal of Algebra\/} \textbf{136} (1991), no.~2,
  432--438.

\bibitem{zaslavsky1982signed-dm}
T.~Zaslavsky, Signed graph coloring, {\em Discrete Mathematics\/} \textbf{39}
  (1982), no.~2, 215--228.

\bibitem{zaslavsky1982signed-dam}
T.~Zaslavsky, Signed graphs, {\em Discrete Applied Mathematics\/} \textbf{4}
  (1982), no.~1, 47--74.

\bibitem{zaslavsky2001supersolvable-ejoc}
T.~Zaslavsky, Supersolvable {Frame}-matroid and {Graphic}-lift {Lattices}, {\em
  European Journal of Combinatorics\/} \textbf{22} (2001), no.~1, 119--133.

\bibitem{zaslavsky2012signed-jociss}
T.~Zaslavsky, Signed graphs and geometry, {\em Journal of Combinatorics,
  Information \& System Sciences\/} \textbf{37} (2012), no.~2-4, 95--143.

\end{thebibliography}

\end{document}